\theoremstyle{definition}
\newtheorem{thm}{Theorem}[subsection]
\newtheorem{prop}[thm]{Proposition}
\newtheorem{cor}[thm]{Corollary}
\newtheorem{con}[thm]{Conjecture}
\newtheorem{lem}[thm]{Lemma}
\newtheorem{rem}[thm]{Remark}
\numberwithin{equation}{subsection}
\def\cal#1{\text{$\mathcal{#1}$}}
\def\ord#1^#2{#1$^{\text{#2}}$}
\def\lie#1{\mathfrak{#1}}
\def\tlie#1{\tilde{\mathfrak{#1}}}
\def\hlie#1{\hat{\mathfrak{#1}}}
\def\uqr#1^#2{\text{$U_q^{#2}(\lie #1)$}}
\def\uqhr#1^#2{\text{$U_q^{#2}(\hlie #1)$}}
\def\us#1^#2{\text{$U_{\xi}^{#2}(\lie #1)$}}
\def\ush#1^#2{\text{$U_{\xi}^{#2}(\hlie #1)$}}
\def\dus#1^#2{\text{$\dot{U}_{\xi}^{#2}(\lie #1)$}}
\def\dush#1^#2{\text{$\dot{U}_{\xi}^{#2}(\hlie #1)$}}
\def\wtl{{\rm wt}_\ell}
\def\wt{{\rm wt}}
\def\het{{\rm ht}}
\def\supp{{\rm supp}}
\def\rupp{{\rm rsupp}}
\def\ch{{\rm ch}}
\def\qch{{\rm qch}}
\def\opl_#1^#2{\text{\scriptsize$\bigoplus\limits_{\text{\normalsize$#1$}}^{\text{\normalsize$#2$}}$}}
\def\otm_#1^#2{\text{\scriptsize$\bigotimes\limits_{\text{\footnotesize$#1$}}^{\text{\footnotesize$#2$}}$}}
\def\wcal#1{{\mbox{$\widetilde{\cal #1}$}}}
\def\bs#1{\boldsymbol{#1}}
\def\endd{\hfill$\diamond$}
\newcommand{\g}{\mathfrak{g}}
\newcommand{\n}{\mathfrak{n}}
\newcommand{\C}{\mathbb{C}}
\newcommand{\Z}{\mathbb{Z}}
\begin{document}
\title[Non-Minimality of Certain Irregular Coherent Preminimal Affinizations]{Non-Minimality of Certain Irregular\\ Coherent Preminimal Affinizations}
\author[Adriano Moura and Fernanda Pereira]{Adriano Moura and Fernanda Pereira}
\thanks{Part of this work was developed while the second author was a visiting Ph.D. student at Institut de Mathématiques de Jussieu working under the supervision of David Hernandez. She thanks David Hernandez for his guidance during that period and Université Paris 7 for the hospitality. She also thanks Fapesp (grant 2009/16309-5) for the financial support. The work of the first author was partially supported by CNPq grant 304477/2014-1 and Fapesp grant 2014/09310-5. He also thanks Matheus Brito for useful discussions. Finally, we thank the anonymous referee for several important corrections as well as for the present simplified proof of Proposition \ref{p:parts}. }

\address{Departamento de Matemática, Universidade Estadual de Campinas, Campinas - SP - Brazil, 13083-859.}
\email{aamoura@ime.unicamp.br}

\address{Departamento de Matemática, Divisão de Ciências Fundamentais, Instituto Tecnológico de Aeronáutica, São José dos Campos - SP - Brazil, 12228-900.}
\email{fpereira@ita.br}

\begin{abstract}
Let $\mathfrak g$ be a finite-dimensional simple Lie algebra of type $D$ or $E$ and $\lambda$ be a dominant integral weight whose support bounds the subdiagram of type $D_4$. We study certain quantum affinizations of  the simple $\lie g$-module of highest weight $\lambda$ which we term preminimal affinizations of order two (this is the maximal order for such $\lambda$). This class can be split in two:  the coherent and the incoherent affinizations. 
If $\lambda$ is regular, Chari and Pressley proved that the associated minimal affinizations belong to one of the three equivalent classes of coherent preminimal affinizations. In this paper we show that, if $\lambda$ is irregular, the coherent preminimal affinizations are not minimal under certain hypotheses. Since these hypotheses are always satisfied if $\mathfrak g$ is of type $D_4$, this completes the classification of minimal affinizations for type $D_4$ by giving a negative answer to a conjecture of Chari-Pressley stating that the coherent and the incoherent affinizations were equivalent in type $D_4$ (this corrects the opposite claim made by the first author in a previous publication).
\end{abstract}

\maketitle

\section{Introduction}

This is the second paper of a series based on a project aiming at describing the classification of the Drinfeld polynomials of the irregular minimal affinizations of type $D$. The theory of minimal affinizations, initiated in \cite{cha:minr2,cp:minsl}, is object of intensive study due to its rich structure and connections to other areas such as mathematical physics and combinatorics \cite{chhe:bey,chi,her:min,her:KRconj,hele:KRcluster,liqi,naoi:grlim,naoi:D}.
We refer to the first two paragraphs of the first paper of the series \cite{hmp:tpa} for an account of the status of this classification problem when this project started. The main result of the present paper (Theorem \ref{t:cohnotmin}) is one of the crucial steps towards the final classification: it will provide one of the tools we shall use to compare certain affinizations or to show that they are not comparable. Moreover, together with the results of \cite{cp:minirr},  Theorem \ref{t:cohnotmin} completes the classification in type $D_4$ by showing that the corresponding elements of two different families of affinizations described in \cite{cp:minirr} are not equivalent, contrary to what was conjectured there. In fact, these elements are comparable and one is strictly larger than the other. Hence, only one of them, which we term an incoherent preminimal affinization of order 2 here, is actually a minimal affinization. The corresponding coherent affinization (the one that follows the pattern of the minimal affinizations in the regular case) is actually not minimal.
The study made here goes beyond type $D$ and proves that, under certain conditions, the coherent affinizations are not minimal also in type $E$. Since the proof of this result is rather lengthy and there are several parts which are interesting in their own right,  we deem appropriate to write a paper focusing exclusively on this result. 

We now give a not so formal description of the main results of the paper. Let $U_q(\tlie g)$ denote the quantum affine algebra associated to a simply laced finite type Kac-Moody algebra $\lie g$. If $\bs{\omega}$ is a Drinfeld polynomial with classical weight $\lambda$, then the associated irreducible $U_q(\tlie g)$-module $V_q(\bs{\omega})$, when regarded as module for $U_q(\lie g)$, decomposes as a direct sum of the form
\begin{equation*}
  V_q(\bs{\omega}) \cong V_q(\lambda)\oplus \oplus_{\mu}^{} V_q(\mu)^{\oplus m_\mu}
\end{equation*}
where the sum is over all dominant integral weights $\mu$ such that $\mu<\lambda$ in the usual partial order on weights, $m_\mu$ is a nonnegative  integer, and $V_q(\mu)$ denotes the irreducible $U_q(\lie g)$-module of highest weight $\mu$. Any module satisfying this kind of decomposition is said to be an affinization of $V_q(\lambda)$ and isomorphism as $U_q(\lie g)$-modules define an equivalence relation on the class of affinizatins of $V_q(\lambda)$. Moreover, the partial order on weights induces a partial order on the set of equivalence classes of affinizations which obviously admit minimal elements, termed minimal affinizations \cite{cha:minr2}. If $\lie g$ is of type $A$, since quantum analogues of evaluation maps exist \cite{jim:qan}, $V_q(\bs{\omega})$ is minimal if and only if $m_\mu=0$ for all $\mu<\lambda$. In that case, the roots of each polynomial $\bs{\omega}_i(u)$ form what is called a $q$-string, where $i\in I$ and $I$ is an index set for the nodes of the Dynkin diagram of $\lie g$. Moreover, if we denote by $a_i$ the center of the $q$-string associated to node $i$ and let $i_1$ be the first node in $\supp(\lambda)$, the support of $\lambda$, there exists a strictly monotonic function $f$ defined on $\supp(\lambda)$ such that
\begin{equation*}
 a_i = a_{i_1}q^{f(i)}. 
\end{equation*}
If $f$ is increasing, we say $V_q(\bs{\omega})$ is an increasing minimal affinization. Otherwise, we say it is decreasing.  Although the increasing and decreasing minimal affinizations are equivalent, the understanding of the combinations of increasing and decreasing patterns for diagram subalgebras of type $A$ is a key point for describing the minimal affinizations outside type $A$.

More precisely, it was proved in \cite{cp:minsl} that, if $V_q(\bs{\omega})$ is a minimal affinization and $J\subseteq I$ corresponds to a connected subdiagram of type $A$ which remains connected after removing the trivalent node $i_*$, then the associated $J$-tuple of polynomials correspond to a minimal affinization of type $A$. We say that  $\bs{\omega}$ is preminimal if it satisfies this property. Thus, if $V_q(\bs{\omega})$ is minimal,  $\bs{\omega}$ is preminimal. If $i$ is an extremal node, let $I_i$ be the maximal subdiagram of type $A$ not containing $i$. We say that $\bs{\omega}$ is $i$-minimal if the associated $I_i$-tuple of polynomials corresponds to a minimal affinization of type $A$. The order of minimality of a preminimal $\bs{\omega}$ is defined as the cardinality of the set of extremal nodes $i$ such that $\bs{\omega}$ is $i$-minimal. Hence, the order can be $0,1,2,3$. It follows from the results of \cite{cp:minsl} that, if $\supp(\lambda)\subseteq I_i$ for some extremal node $i$, then $V_q(\bs\omega)$ is a minimal affinization if and only if $\bs{\omega}$ is premininal of order $3$. On the other hand, if $\supp(\lambda)$ bounds the diagram of type $D_4$, the order of any preminimal $\bs{\omega}$ is at most $2$ and, if $i_*\in\supp(\lambda)$, $V_q(\bs\omega)$ is a minimal affinization if and only if $\bs{\omega}$ is premininal of order $2$. Note that, in this case, there are three equivalence classes of minimal affinizations, one for each extremal node, the node $i$ for which $i$-minimality fails. 

The purpose of this paper is to describe a few properties of the preminimal affinizations of order two when $i_*\notin\supp(\lambda)$ and $\supp(\lambda)$ bounds the subdiagram of type $D_4$. For type $D_4$, it follows from \cite{cp:minirr} that, if $V_q(\bs{\omega})$ is a minimal affinization, then $\bs{\omega}$ has order $1$ or $2$. It was clear from \cite{cp:minirr} that not all Drinfeld polynomials of order $1$ correspond to minimal affinizations. However, the conjecture mentioned in the first paragraph can be rephrased as ``all preminimal Drinfeld polynomials of order $2$ correspond to minimal affinizations''. The preminimal Drinfeld polynomials of order $2$ can be encoded by the following pictures:

\setlength{\unitlength}{.3cm}
\begin{equation*}
  \begin{picture}(5,2)(17,0)
  \put(10,0.5){\circle{1}}\put(10.5,0.5){\line(1,0){3}}\put(14,0.5){\circle{1}}
  \put(14.2,1){\line(2,3){1.2}}\put(14.2,0){\line(2,-3){1.2}}\put(15.7,3.2){\circle{1}}\put(15.7,-2.28){\circle{1}}

  \multiput(10.5,1)(.8,.4){6}{\qbezier(0,0)(0,0)(.4,.2)}
  \multiput(10.5,0)(.8,-.4){6}{\qbezier(0,0)(0,0)(.4,-0.2)}

  \put(10.7,1.1){\vector(-2,-1){0.4}}  
  \put(10.7,-0.1){\vector(-2,1){0.4}}  
  
  \put(8.7,0.2){{\small${i}$}} \put(14.7,0.2){{\small${i_*}$}} 
  \end{picture}
  \begin{picture}(5,2)(3,0)
    \put(10,0.5){\circle{1}}\put(10.5,0.5){\line(1,0){3}}\put(14,0.5){\circle{1}}
    \put(14.2,1){\line(2,3){1.2}}\put(14.2,0){\line(2,-3){1.2}}\put(15.7,3.2){\circle{1}}\put(15.7,-2.28){\circle{1}}
    
    \multiput(10.5,1)(.8,.4){6}{\qbezier(0,0)(0,0)(.4,.2)}
    \multiput(10.5,0)(.8,-.4){6}{\qbezier(0,0)(0,0)(.4,-0.2)}

    \put(14.7,3.1){\vector(2,1){0.4}}
    \put(14.7,-2.1){\vector(2,-1){0.4}}
    
    \put(8.7,0.2){{\small${i}$}} \put(14.7,0.2){{\small${i_*}$}} 
  \end{picture}
\end{equation*}\vspace{5pt}

\vspace{5pt}
\begin{equation*}
\begin{picture}(5,2)(17,0)
\put(10,0.5){\circle{1}}\put(10.5,0.5){\line(1,0){3}}\put(14,0.5){\circle{1}}
\put(14.2,1){\line(2,3){1.2}}\put(14.2,0){\line(2,-3){1.2}}\put(15.7,3.2){\circle{1}}\put(15.7,-2.28){\circle{1}}

\multiput(10.5,1)(.8,.4){6}{\qbezier(0,0)(0,0)(.4,.2)}
\multiput(10.5,0)(.8,-.4){6}{\qbezier(0,0)(0,0)(.4,-0.2)}

\put(14.7,3.1){\vector(2,1){0.4}}
\put(10.7,-0.1){\vector(-2,1){0.4}}  

\put(8.7,0.2){{\small${i}$}} \put(14.7,0.2){{\small${i_*}$}} 
\end{picture}
\begin{picture}(5,2)(3,0)
\put(10,0.5){\circle{1}}\put(10.5,0.5){\line(1,0){3}}\put(14,0.5){\circle{1}}
\put(14.2,1){\line(2,3){1.2}}\put(14.2,0){\line(2,-3){1.2}}\put(15.7,3.2){\circle{1}}\put(15.7,-2.28){\circle{1}}

\multiput(10.5,1)(.8,.4){6}{\qbezier(0,0)(0,0)(.4,.2)}
\multiput(10.5,0)(.8,-.4){6}{\qbezier(0,0)(0,0)(.4,-0.2)}

\put(10.7,1.1){\vector(-2,-1){0.4}} 
\put(14.7,-2.1){\vector(2,-1){0.4}}

\put(8.7,0.2){{\small${i}$}} \put(14.7,0.2){{\small${i_*}$}} 
\end{picture}
\end{equation*}\vspace{5pt}

\noindent The arrows point in the direction that the function $f$ decreases and $i$ is the node for which $i$-minimality fails. Drinfeld polynomials satisfying either of the pictures in each line give rise to equivalent affinizations. Note that, if $i_*\in\supp(\lambda)$, Drinfeld polynomials satisfying the ones in the second line do not exist. We say that the ones satisfying the first line are coherent (because the arrows agree) and the ones satisfying the second line are incoherent.  The notion of coherent and incoherent preminimal Drinfeld polynomials of order $2$ can be extended for rank higher than $4$, including type $E$. Note that the incoherent ones do not exist when $\supp(\lambda)$ intersects more than once the connected subdiagram having $i$ and $i_*$ as extremal nodes. Otherwise, we conjecture that the coherent ones are not minimal affinizations and prove that this is indeed the case in type $D$ as well as under certain conditions on $\supp(\lambda)$ when $\lie g$ is of type $E$. In fact, the proof consists of showing that the coherent affinizations are strictly larger than their incoherent counterpart when the incoherent ones exist. 

Part of the proof of Theorem \ref{t:cohnotmin} is based on a result about the multiplicity of $V_q(\nu)$ as a summand of the coherent and incoherent affinizations where $\nu$ is a specific dominant weight.  The precise statement is in Proposition \ref{p:outer}, which can be proved in greater generality than Theorem \ref{t:cohnotmin}. The proof of Proposition \ref{p:outer} is a combination of arguments in the context of graded limits as well as in the context of qcharacters. Generators and relations for the graded limits of the coherent affinizations were described in \cite{mou} for type $D_4$. This result has been extended for type $D_n,n>4$, in \cite{naoi:D} in the case $i_*\in\supp(\lambda)$. However, as far as we can tell, the argument of \cite{naoi:D} also works when $i_*\notin\supp(\lambda)$. 
Inspired by Proposition \ref{p:outer}, we define in Section \ref{ss:grlim} a quotient of the ``coherent graded limits'' and prove that it projects onto the corresponding ``incoherent graded limits'' under the hypotheses of Theorem \ref{t:cohnotmin}. It is tempting to conjecture that these projections are actually isomorphisms (see Remark \ref{r:demazure} for further comments).

Beside Proposition \ref{p:outer}, by considering diagram subalgebras, the proof of Theorem \ref{t:cohnotmin} also relies on proving that certain tensor products of minimal affinizations in types $A$ and $D$ are irreducible. From type $A$ we need tensor products of a general minimal affinization with a Kirillov-Reshetikhin module supported on an extremal node. This  was exactly the topic of our first paper in this series, \cite{hmp:tpa}, where we described a necessary and sufficient condition for the irreducibility of such tensor products. In fact, this criterion for irreducibility is half of the main result of \cite{hmp:tpa}. The other half will be crucial in the proof of the final classification as it also provides a tool  to compare certain affinizations or to show that they are not comparable. From type $D$, we use a sufficient condition for the irreducibility of tensor products of two Kirillov-Reshetikhin modules associated to distinct extremal nodes proved in \cite{cha:braid}. For the   proof of the final classification of minimal affinizations in type $D$ we will need sharper results which will appear in \cite{hmp:tpd} (see also \cite{fer:thesis}). 

The paper is organized as follows. In Section \ref{ss:main}, we fix the basic notation and review the background needed to state the main results. Further technical background is reviewed in Section \ref{ss:tec}. In Section \ref{ss:parts}, we compute the dimension of certain weight spaces of certain $\lie g$-modules in terms of a ``modified'' Kostant partition function (see Proposition \ref{p:parts} and \eqref{e:dimVnuall}). Such dimensions play a crucial role in the proof of Proposition \ref{p:outer}. In Section \ref{ss:diag}, beside reviewing some known facts about diagram subalgebras, we prove Lemma \ref{l:many0} which is an important technical ingredient to be used in  the proof of Theorem \ref{t:cohnotmin}. The few facts about qcharacters that we need are reviewed in Section \ref{ss:qchar} and the aforementioned criteria for irreducibility of certain tensor products is reviewed in Section \ref{ss:tp}. Section \ref{ss:classlim} contains the needed technical background about graded limits. Section \ref{s:proof} is entirely dedicated to the proof of the main results: Theorem \ref{t:cohnotmin} and Proposition \ref{p:outer}. In Section \ref{ss:KR}, we deduce some facts about qcharacters and tensor products of Kirillov-Reshetikhin modules. Upper bounds for certain outer multiplicities are obtained in Section \ref{ss:upper} by studying graded limits. The main technical obstacles for proving Theorem \ref{t:cohnotmin} in greater generality when $\lie g$ is of type $E$ arise from Lemma \ref{l:remroots}. Although the extra hypotheses are necessary for the validity of that Lemma, they may not be necessary for the validity of Theorem \ref{t:cohnotmin}. However, the proof with the techniques employed here, would require a much more intricate analysis  (see the last paragraph of Section \ref{ss:cohinc} for more precise comments). The heart of the proof of Proposition \ref{p:outer} is in Sections \ref{ss:tpinc} and \ref{ss:tpcoh}, where we study irreducible factors of ``incoherent'' and ``coherent'' tensor products of Kirillov-Reshetikhin modules associated to extremal nodes of the Dynkin diagram. Theorem \ref{t:cohnotmin} is finally proved in Section \ref{ss:cohnotmin}.

\section{The Main Results}\label{ss:main}

Throughout the paper, let $\mathbb C$ and  $\mathbb Z$ denote the sets of complex numbers and integers, respectively. Let also $\mathbb Z_{\ge m} ,\mathbb Z_{< m}$, etc. denote the obvious subsets of $\mathbb Z$. Given a ring $\mathbb A$, the underlying multiplicative group of units is denoted by $\mathbb A^\times$. 
The symbol $\cong$ means ``isomorphic to''. We shall use the symbol $\diamond$ to mark the end of remarks, examples, and statements of results whose proofs are postponed. The symbol \qedsymbol\ will mark the end of proofs as well as of statements whose proofs are omitted. 

\subsection{Classical and Quantum Algebras}\label{ss:clalg}

Let $I$ be the set of vertices of a  finite-type simply laced indecomposable Dynkin diagram and let $\lie g$ be the associated simple Lie algebra over $\mathbb C$ with a fixed Cartan subalgebra $\lie h$. Fix a set of positive roots $R^+$ and let
$\lie g_{\pm \alpha},\alpha\in R^+$, and $\lie g=\lie n^-\oplus\lie h\oplus\lie n^+$ be the associated root spaces and triangular decomposition.
The simple roots will be denoted by $\alpha_i$, the fundamental weights by
$\omega_i$, $i\in I$, $Q,P,Q^+,P^+$ will denote the root and weight
lattices with corresponding positive cones, respectively.
Let also $h_\alpha\in\lie h$  be the co-root associated to $\alpha\in R^+$. If $\alpha=\alpha_i$ is simple, we often simplify notation and write
$h_i$. We denote by $x_{\alpha}^\pm$ any element spanning the root space $\lie g_{\pm\alpha}$. In particular, we shall write
\begin{equation}\label{e:nonormrv}
[x_\alpha^-,x_\beta^-]=x_{\alpha+\beta}^-.
\end{equation}
Let $C = (c_{ij})_{i,j\in I}$ be the Cartan matrix of $\lie g$, i.e., $c_{i,j}=\alpha_j(h_i)$.

By abuse of language, we will refer to any subset $J$ of $I$ as a subdiagram of the Dynkin diagram of $\lie g$. 
Given $J\subseteq I$, we let $\bar J$ be the minimal connected sudiagram of $I$ containing $J$ and let $\partial J$ be the subset of $J$ consisting of nodes connected to at most one other node of $J$ and 
\begin{equation*}
\mathring{J} = J\setminus\partial J.
\end{equation*}
For $i,j\in I$, set
\begin{equation*}
[i,j] = \overline{\{i,j\}}, \quad (i,j]= [i,j]\setminus\{i\}, \quad \quad [i,j)= [i,j]\setminus\{j\}.
\end{equation*}
Define also the distance between $i,j$ as
\begin{equation*}
d(i,j) = \#[i,j).
\end{equation*}
For a subdiagram $J\subseteq I$, we let $\lie g_J$ be the subalgebra of $\lie g$ generated by the corresponding simple root vectors, $\lie h_J=\lie h\cap\lie g_J$ and so on. Let also $Q_J$ be the subgroup of $Q$ generated by $\alpha_j, j\in J$, and
$R^+_J=R^+\cap Q_J$. Set
\begin{equation}\label{e:defvtheta}
\vartheta_J = \sum_{j\in J}\alpha_j
\end{equation}
which is an element of $R_J$ if $J$ is connected. 
When $J=I$ we may simply write $\vartheta$. 
Given $\lambda\in P$, let $\lambda_J$ denote the
restriction of $\lambda$ to $\lie h_J^*$ and let $\lambda^J\in P$ be
such that $\lambda^J(h_j)=\lambda(h_j)$ if $j\in J$ and
$\lambda^J(h_j)=0$ otherwise. The support of $\mu\in P$ is defined by
$${\rm supp}(\mu)=\{i\in I:\mu(h_i)\ne 0\}.$$ Given $\eta=\sum_{i\in I}s_i\alpha_i\in Q$, set
\begin{equation*}
\rupp(\eta) = \{i\in I:s_i\ne 0\}, \quad\het_i(\eta) = s_i,\ i\in I, \quad\text{and}\quad \het(\eta)=\sum_{i\in I} s_i.
\end{equation*}

For a Lie algebra $\lie a$ over $\mathbb C$, let $\tlie a=\lie a\otimes  \mathbb
C[t,t^{-1}]$ be its loop algebras and identify $\lie a$ with the subalgebra $\lie a\otimes 1$. 
Then, $\tlie g = \tlie n^-\oplus \tlie h\oplus \tlie n^+$ and $\tlie h$ is an abelian subalgebra. We denoted by $\lie a[t]$ the subalgebra determined by $\lie a\otimes\mathbb C[t]$. Let also $\lie a[t]_+=\lie a\otimes t\mathbb C[t]$. 
The elements $x_\alpha^\pm\otimes t^r, \alpha\in R^+, r\in\mathbb Z$, will be denoted by $x_{\alpha,r}^\pm$ and similarly we define $h_{\alpha,r}$. Given $a\in\mathbb C$, let $\tau_a$ be the Lie algebra automorphism of $\lie a[t]$ defined by 
\begin{equation}\label{e:grtaua}
\tau_a(x\otimes f(t))=x\otimes f(t-a) \quad\text{for every} \quad x\in\lie a,\ f(t)\in\mathbb C[t].
\end{equation}

Let $\mathbb F$ be an algebraic closure of $\mathbb C(q)$, the ring of rational functions on an indeterminate $q$, and let $U_q(\lie g)$ and $U_q(\tlie g)$ be the associated Drinfeld Jimbo quantum groups over $\mathbb F$. We use the notation as in \cite[Section 1.2]{mou}. In particular, the Drinfeld loop-like generators of $U_q(\tlie g)$ are denoted by $x_{i,r}^\pm, h_{i,s}, k_i^{\pm 1}, i\in I, r,s\in\mathbb Z, s\ne 0$. Also, $U_q(\lie g)$ is the subalgebra of $U_q(\tlie g)$ generated by $x_i^\pm = x_{i,0}^\pm, k_i^{\pm 1}, i\in I$, and the subalgebras $U_q(\lie n^\pm), U_q(\lie h), U_q(\tlie n^\pm), U_q(\tlie h), U_q(\lie g_J)$, $U_q(\tlie g_J)$, where $J\subseteq I$,  are defined in the expected way.

The $\ell$-weight lattice of $U_q(\tlie g)$ is the multiplicative group $\cal P_q$ of $n$-tuples of rational
functions $\bs\mu = (\bs\mu_i(u))_{i\in I}$ with values
in $\mathbb F$  such that $\bs\mu_i(0)=1$ for all $i\in I$. The elements of the submonoid $\cal P^+_q$ of
$\cal P_q$ consisting of $n$-tuples of polynomials will be refered to as dominant $\ell$-weights or Drinfeld polynomials. Given
$a\in\mathbb F^\times$ and $\mu\in P$, let $\bs{\omega}_{\mu,a}\in\mathcal P_q$ be the element whose $i$-th rational function is
\begin{equation*}
(1-au)^{\mu(h_i)}, \quad i\in I.
\end{equation*}
In the case that $\mu=\omega_i$ for some $i$, we often simplify notation and write $\bs\omega_{i,a}$. Note that $\mathcal P_q$ is the (multiplicative) free abelian group on the set $\{\bs{\omega}_{i,a}:i\in I,a\in\mathbb F^\times\}$, let $\mathcal P$ denote the subgroup generated by $\{\bs{\omega}_{i,a}:i\in I,a\in\mathbb C^\times\}$, and $\mathcal P^+=\mathcal P_q^+\cap\mathcal P$. If
\begin{equation}\label{e:appears}
\bs\mu = \prod_{(i,a)\in I\times\mathbb F^\times} \bs\omega_{i,a}^{p_{i,a}}
\end{equation}
we shall say that $\bs\omega_{i,a}$ (respectively,
$\bs\omega_{i,a}^{-1}$) appears in $\bs\mu$ if $p_{i,a}>0$
(respectively, $p_{i,a}<0$).
Let $\wt:\mathcal P_q \to P$ be the group homomorphism determined by setting $\wt(\bs\omega_{i,a})=\omega_i$. 
We have an injective map $\bs\Psi:\mathcal P_q\to (U_q(\tlie h))^*, \bs{\omega\mapsto \bs\Psi_{\bs{\omega}}}$, (see \cite[Section 1.8]{mou}) and, hence, we identify $\cal P_q$  with its image in $(U_q(\tlie h))^*$ under $\bs\Psi$. Similarly, there is an injective map $\mathcal P\to \tlie h^*$. 
Following \cite{cm:qblock}, given $i\in I, a\in\mathbb F^\times, m\in\mathbb Z_{\ge 0}$, define
\begin{equation*}
\bs\omega_{i,a,m} = \prod_{j=0}^{m-1} \bs\omega_{i,aq^{m-1-2j}} \qquad\text{and}\qquad
\bs\alpha_{i,a} = \bs\omega_{i,aq,2}\prod_{j\ne i}
\bs\omega_{j,aq,-c_{j,i}}^{-1}
\end{equation*}

For $\bs\omega\in\cal P_q$, let $\bs\omega_J$ be the associated
$J$-tuple of rational functions and let $\cal
P_J=\{\bs\omega_J:\bs\omega\in \cal P_q\}$. Similarly define $\cal
P_J^+$. Notice that $\bs\omega_J$ can be regarded as an element of
the $\ell$-weight lattice of $U_q(\tlie g_J)$. Let $\pi_J:\cal P_q\to
\cal P_J$ denote the map $\bs\omega\mapsto\bs\omega_J$. If $J=\{j\}$
is a singleton, we write $\pi_j$ instead of $\pi_J$. An
$\ell$-weight $\bs\omega$ is said to be $J$-dominant if
$\bs\omega_J\in \cal P_J^+$. Let also $\cal Q_J\subset\cal P_J$ (respectively, $\cal Q_J^+$) be
the subgroup (submonoid) generated by $\pi_J(\bs\alpha_{j,a}), j\in
J,a\in\mathbb C^\times$. When no confusion arises, we shall simply
write $\bs\alpha_{j,a}$ for its image in $\cal P_J$ under $\pi_J$.
Let
$$\iota_J:\Z[\cal Q_J]\rightarrow \Z[\cal Q_q],$$
be the ring homomorphism such that
$\iota_J(\bs\alpha_{j,a})=\bs\alpha_{j,a}$ for all $j\in J,
a\in\mathbb C^\times$. We shall often abuse of notation and identify
$\cal Q_J$ with its image under $\iota_J$. In particular, given
$\bs\mu\in\cal P_q$, we set
\begin{equation*}
\bs\mu\cal Q_J =\{ \bs\mu\bs\alpha: \bs\alpha\in\iota_J(\cal Q_J)\}.
\end{equation*}
It will also be useful to introduce the element $\bs\omega^J\in\cal P_q$ defined by
$$(\bs\omega^J)_j(u)=\bs\omega_j(u) \quad\text{if}\quad j\in J \quad\text{and}\quad (\bs\omega^J)_j(u)=1 \quad\text{otherwise.}$$

\subsection{Finite-Dimensional Representations}
We let $\mathcal C$ denote the category of finite-dimen\-sional $\lie g$-modules and denote by $V(\lambda)$ an irreducible $\lie g$-module of highest weight $\lambda\in P^+$. The character of a $\lie g$-module $V$ will be denoted by $\ch(V)$. We think of $\ch(V)$ as an element of the group ring $\mathbb Z[P]$. Let also $\cal C_q$ be the category of all finite-dimensional type 1 modules of $U_q(\lie g)$. Thus, a finite-dimensional $U_q(\lie g)$-module $V$ is in $\cal C_q$ if $V=\bigoplus_{\mu\in P}^{} V_\mu$ where
$$V_\mu=\{v\in V: k_iv=q^{\mu(h_i)}v \text{ for all } i\in I\}.$$
The character of $V$, also denote by $\ch(V)$, is defined in the obvious way.  
The following theorem summarizes the basic facts about $\cal C_q$.

\begin{thm}\label{t:ciuqg} Let $V$ be an object of $\cal C_q$. Then:
\begin{enumerate}[(a)]
\item $\dim V_\mu = \dim V_{w\mu}$ for all $w\in\cal W$.
\item $V$ is completely reducible.
\item For each $\lambda\in P^+$ the $U_q(\lie g)$-module $V_q(\lambda)$ generated by a vector $v$ satisfying
$$x_i^+v=0, \qquad k_iv=q^{\lambda(h_i)}v, \qquad (x_i^-)^{\lambda(h_i)+1}v=0,\quad\forall\ i\in I,$$
is irreducible and finite-dimensional. If $V\in\cal C_q$ is
irreducible, then $V$ is isomorphic to $V_q(\lambda)$  for some
$\lambda\in P^+$.
\item For all $\lambda\in P^+$,  $\ch(V_q(\lambda)) = \ch(V(\lambda))$.
\hfill\qedsymbol
\end{enumerate}
\end{thm}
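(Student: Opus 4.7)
The plan is to follow the rank-one reduction developed in the original treatments of Lusztig and Rosso, reducing each assertion to the well-understood representation theory of $U_q(\lie{sl}_2)$ at generic $q$, combined with standard highest-weight arguments in the spirit of the classical case $q=1$.

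For part (c), I would construct $V_q(\lambda)$ as the quotient of the quantum Verma module $M_q(\lambda)$ (generated by a vector $v$ killed by $U_q(\lie n^+)$ with $k_iv=q^{\lambda(h_i)}v$) by the submodule $N_\lambda$ generated by $(x_i^-)^{\lambda(h_i)+1}v$ for all $i\in I$. Stability of $N_\lambda$ under $U_q(\lie n^+)$ reduces to the rank-one identity inside $U_q(\lie{sl}_2)^{(i)}=\langle x_i^\pm,k_i^{\pm 1}\rangle$ that $x_i^+(x_i^-)^{n+1}v=0$ whenever $x_i^+v=0$ and $k_iv=q^nv$, together with quantum Serre commutation for $[x_j^+,(x_i^-)^m]$ with $j\ne i$. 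Finite-dimensionality of the quotient is then a quantum version of Serre's argument, using the PBW basis of $U_q(\lie n^-)$ to bound weight multiplicities. For the converse, any irreducible $V\in\cal C_q$ is finite-dimensional and thus contains a vector of maximal weight $\lambda$; restriction to each $U_q(\lie{sl}_2)^{(i)}$ together with the classification of its finite-dimensional type-$1$ modules forces $\lambda(h_i)\in\Z_{\ge 0}$, so $\lambda\in P^+$, and the universal property yields $V\cong V_q(\lambda)$.

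Part (a) then follows by restricting an arbitrary $V\in\cal C_q$ to each $U_q(\lie{sl}_2)^{(i)}$: the result decomposes into a direct sum of $(n+1)$-dimensional type-$1$ irreducibles, each of which has weight spaces symmetric under $n\leftrightarrow -n$, giving $\dim V_\mu=\dim V_{s_i\mu}$, and iterating over simple reflections yields invariance under all of $\cal W$. Part (b) can be obtained by proving $\mathrm{Ext}^1(V_q(\lambda),V_q(\mu))=0$: in a short exact sequence $0\to V_q(\mu)\to E\to V_q(\lambda)\to 0$, lift a highest-weight vector of $V_q(\lambda)$ to a weight vector $w\in E$; adjusting $w$ within its weight space using part (a) and the action of $U_q(\lie n^+)$ produces a genuine highest-weight vector of weight $\lambda$ and hence a splitting.

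Finally, for part (d), the character equality can be proved via Lusztig's specialization argument: $V_q(\lambda)$ admits an integral $\mathcal A$-form (with $\mathcal A=\Z[q,q^{-1}]$) whose specialization at $q=1$ is an irreducible $\lie g$-module of highest weight $\lambda$, namely $V(\lambda)$. Because the $U_q(\lie h)$-weight decomposition is preserved by the specialization, the two characters must agree. An alternative route is to derive a quantum Weyl character formula from parts (a)--(c) together with a BGG-type resolution of $V_q(\lambda)$ by quantum Verma modules. The main obstacle throughout is the finite-dimensionality assertion in (c): the interaction between the quantum Serre relations and the divided-power relations $(x_i^-)^{\lambda(h_i)+1}v=0$ is delicate and requires careful bookkeeping via the PBW basis to ensure that the Serre-style bounding argument carries over from the classical setting.
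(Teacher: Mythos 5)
This theorem carries the paper's ``proof omitted'' marker (the \qedsymbol convention announced at the start of Section 2): it is quoted as standard background, with the relevant sources (Lusztig's and Chari--Pressley's books) in the bibliography, so there is no in-paper argument to compare against. Your outline is the standard textbook proof and is essentially correct, but two steps are thinner than they should be. For (b), the lifting argument as stated only works when the lifted vector $w\in E_\lambda$ is forced to be highest weight; it does not by itself dispose of the cases $\mu>\lambda$ and $\mu=\lambda$ (in the latter, all of the two-dimensional space $E_\lambda$ consists of highest-weight vectors and one must still rule out a nonsplit self-extension), and even once $w$ is highest you need to know that the finite-dimensional highest-weight module $U_q(\lie g)w$ is irreducible --- which is precisely the irreducibility assertion of (c) --- before concluding $U_q(\lie g)w\cap V_q(\mu)=0$ and hence a splitting. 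The clean standard route is via the quantum Casimir, which acts by distinct scalars on highest-weight modules with $\mu<\lambda$ and reduces everything to the self-extension case. For (d), the specialization argument needs the $\mathcal A$-form $U_{\mathcal A}(\lie g)v$ to be a free $\mathcal A$-module whose weight spaces have the expected ranks --- easiest with $\mathcal A=\mathbb C[q,q^{-1}]$, a PID, which is exactly the choice the paper makes in its section on classical limits, rather than $\Z[q,q^{-1}]$ --- and the identification of the $q=1$ specialization with $V(\lambda)$ again invokes the classical fact that a finite-dimensional highest-weight $\lie g$-module is irreducible. With these points filled in, your proposal is a complete proof along the standard lines.
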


If $J\subseteq I$, we shall denote by $V_q(\lambda_J)$ the simple
$U_q(\lie g_J)$-module of highest weight $\lambda_J$. Since $\cal
C_q$ is semisimple, it is easy to see that, if $\lambda\in P^+$ and
$v\in V_q(\lambda)_\lambda$ is nonzero, then $U_q(\lie g_J)v\cong
V_q(\lambda_J)$.

Let $\wcal C_q$ the category of all finite-dimensional $\ell$-weight modules of $U_q(\tlie g)$.
Thus, a finite-dimensional $U_q(\tlie g)$-module $V$ is in $\wcal C_q$ if
$$V=\bigoplus_{\bs\omega\in\mathcal P_q}^{} V_{\bs\omega}$$
where
$$v\in V_{\bs\omega} \quad\Leftrightarrow\quad \exists\ k\gg 0 \quad\text{s.t.}\quad (\eta-\bs\Psi_{\bs\omega}(\eta))^kv=0 \quad\text{for all}\quad \eta\in U_q(\tlie h).$$
$V_{\bs\omega}$ is called the $\ell$-weight space of $V$ associated to $\bs\omega$. A nonzero vector $v\in V_{\bs\omega}$ is
said to be a highest-$\ell$-weight vector if
$$\eta v=\bs\Psi_{\bs\omega}(\eta)v \quad\text{for every}\quad \eta\in U_q(\tlie h)
\quad\text{and}\quad x_{i,r}^+v=0 \quad\text{for all}\quad i\in I, r\in\mathbb Z.$$
$V$ is said to be a highest-$\ell$-weight module if it is generated by a
highest-$\ell$-weight vector.
Note that if $V\in\wcal C_q$, then $V\in\cal C_q$ and
\begin{equation*}
V_\lambda = \bigoplus_{\bs\omega:\wt(\bs\omega)=\lambda}^{}
V_{\bs\omega}.
\end{equation*}
Given $\bs{\omega}\in\mathcal P_q^+$, the local Weyl module $W_q(\bs{\omega})$ is the $U_q(\tlie g)$-module generated by a vector $w$ satisfying the defining relations
\begin{equation*}
x_{i,r}^+w=0, \quad xw = \bs{\Psi}_{\bs{\omega}}(x)w, \quad (x_i^-)^{\wt(\bs{\omega})+1}w=0
\end{equation*}
for all $i\in I, r\in\mathbb Z, x\in U_q(\tlie h)$. It was proved in \cite{cp:weyl} that $W_q(\bs{\omega})\in\widetilde{\mathcal C}_q$ for every $\bs{\omega}\in\mathcal P_q^+$ and every finite-dimensional highest-$\ell$-weight module of highest $\ell$-weight $\bs{\omega}$ is a quotient of $W_q(\bs{\omega})$. Standard arguments show that $W_q(\bs{\omega})$ has a unique irreducible quotient, denoted by $V_q(\bs{\omega})$. In particular, we get the following classification of the simple objects of $\wcal C_q$.

\begin{thm}\label{t:weyl}
If $V$ is a simple object of $\wcal C_q$, then $V$ is isomorphic to $V_q(\bs{\omega})$ for some $\bs\omega\in\mathcal P^+_q$.\hfill\qedsymbol
\end{thm}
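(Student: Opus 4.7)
The strategy is standard: exhibit a nonzero highest-$\ell$-weight vector in $V$, check that its $\ell$-weight is dominant (i.e.\ lies in $\mathcal P_q^+$), and then invoke the universal property of the local Weyl module together with simplicity of $V$. Concretely, once I produce $v\in V$ with $x_{i,r}^+v=0$ for all $i\in I,r\in\mathbb Z$ and $\eta v=\bs\Psi_{\bs\omega}(\eta)v$ for all $\eta\in U_q(\tlie h)$ with $\bs\omega\in\mathcal P_q^+$, simplicity forces $V=U_q(\tlie g)v$, so $V$ is a finite-dimensional highest-$\ell$-weight module of highest $\ell$-weight $\bs\omega$; by the universal property stated just before the theorem, it is a quotient of $W_q(\bs\omega)$, and hence, being simple, it must be the unique irreducible quotient $V_q(\bs\omega)$.

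To produce $v$, first note that $V\in\widetilde{\mathcal C}_q$ is finite-dimensional and lies in $\mathcal C_q$, so Theorem~\ref{t:ciuqg}(a) applies: the $\mathcal W$-invariance of weight multiplicities implies that any weight $\lambda$ maximal in the weight decomposition of $V$ lies in $P^+$. Pick such a $\lambda$ and consider $V_\lambda$. Since each Drinfeld loop-like generator $x_{i,r}^+$ shifts weights by $+\alpha_i$, maximality of $\lambda$ forces $x_{i,r}^+V_\lambda=0$ for every $i\in I$ and every $r\in\mathbb Z$. On the other hand, every element of $U_q(\tlie h)$ commutes with the $k_j$ and therefore preserves the $U_q(\lie g)$-weight decomposition; in particular $U_q(\tlie h)$ stabilizes $V_\lambda$ and, $V_\lambda$ being a finite-dimensional module for the commutative algebra $U_q(\tlie h)$, it decomposes as the direct sum of its generalized eigenspaces, that is, of $\ell$-weight spaces. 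Any nonzero $v\in V_\lambda\cap V_{\bs\omega}$ is then a highest-$\ell$-weight vector with $\wt(\bs\omega)=\lambda\in P^+$.

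The only nontrivial point remaining is to show $\bs\omega\in\mathcal P_q^+$, i.e.\ that each rational function $\bs\omega_i(u)$ is actually a polynomial of degree $\lambda(h_i)$. For this I would restrict to rank one: for each $i\in I$, the cyclic $U_q(\widetilde{\lie g}_{\{i\}})$-submodule $U_q(\widetilde{\lie g}_{\{i\}})v$ is a finite-dimensional highest-$\ell$-weight module whose highest $\ell$-weight is $\pi_{\{i\}}(\bs\omega)$, and the classification of such modules for $U_q(\widetilde{\lie{sl}}_2)$ (Chari--Pressley) forces the associated rational function to be a polynomial of degree $\lambda(h_i)$ with constant term $1$. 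This is the main technical input of the proof; everything else is the easy ``universal property + simplicity'' wrap-up described in the first paragraph.
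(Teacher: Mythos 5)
Your argument is correct and is precisely the standard derivation the paper has in mind: the theorem is stated with its proof omitted (note the \qedsymbol convention), as an immediate consequence of the preceding discussion of local Weyl modules, and your steps --- maximal weight $\lambda\in P^+$, a highest-$\ell$-weight vector in $V_\lambda$, rank-one reduction to $U_q(\widetilde{\lie{sl}}_2)$ to see that $\bs\omega\in\mathcal P_q^+$, then the universal property of $W_q(\bs{\omega})$ combined with simplicity --- are exactly the content of the Chari--Pressley argument being invoked. One small precision: a nonzero element of $V_\lambda\cap V_{\bs\omega}$ is a priori only a \emph{generalized} eigenvector for $U_q(\tlie h)$, so you should take $v$ to be a simultaneous eigenvector inside that finite-dimensional $U_q(\tlie h)$-stable space (which exists because $U_q(\tlie h)$ is commutative and $\mathbb F$ is algebraically closed); the rest of your proof then goes through verbatim.
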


Analogous results hold for the category $\mathcal C$ of finite-dimensional $\tlie g$-modules. In particular, given $\bs{\omega}\in\mathcal P^+$, we let $W(\bs{\omega})$ and $V(\bs{\omega})$ denote the corresponding local Weyl module and irreducible module, respectively. The following is a corollary of the proof that $W(\bs{\omega})$ is finite-dimensional. 

\begin{prop}\label{p:genbycurrent}
	Suppose $V$ is a highest-$\ell$-weight module for $\tlie g$ and let $v$ be a highest-$\ell$-weight vector. Then, $V=U(\lie n^-[t])v$.\hfill\qedsymbol
\end{prop}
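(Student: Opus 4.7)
The plan is to deduce the result from the standard proof that the local Weyl module $W(\bs\omega)$ is finite-dimensional, with the bulk of the work amounting to PBW bookkeeping that reduces the statement to a single base inclusion.

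First, I would apply the PBW theorem to the triangular decomposition $\tlie g=\tlie n^-\oplus\tlie h\oplus\tlie n^+$, so that $U(\tlie g)=U(\tlie n^-)U(\tlie h)U(\tlie n^+)$. Since $v$ is a highest-$\ell$-weight vector we have $\tlie n^+ v=0$ and $\tlie h\cdot v\subseteq \mathbb C v$, so $V=U(\tlie g)v=U(\tlie n^-)v$. Decomposing $\tlie n^-=\lie n^-[t]\oplus(\lie n^-\otimes t^{-1}\mathbb C[t^{-1}])$ and again applying PBW, it suffices to show that every monomial of the form $x_{\alpha_1,-s_1}^-\cdots x_{\alpha_m,-s_m}^-\cdot u\cdot v$ with $\alpha_i\in R^+$, $s_i>0$ and $u\in U(\lie n^-[t])$ already lies in $U(\lie n^-[t])v$.

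Next, I would run an induction on $m$ together with an induction on $s=\max_i s_i$ to reduce to the case $m=1$, $u=1$. The key identity is $x_{\alpha,-s}^- u=u\,x_{\alpha,-s}^- +[x_{\alpha,-s}^-,u]$, where the commutator expands as a sum of PBW monomials in which one factor $x_{\beta,r}^-$ of $u$ (with $r\ge 0$) is replaced by a scalar multiple of $x_{\alpha+\beta,\,r-s}^-$. If $r\ge s$ this stays in $\lie n^-[t]$; if $r<s$, then the new loop index $r-s$ satisfies $|r-s|<s$, so the inductive hypothesis on $s$ applies. This leaves only the base inclusion $x_{\alpha,-s}^- v\in U(\lie n^-[t])v$ for every $\alpha\in R^+$ and $s>0$.

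For the base case I would invoke that $V$ is a quotient of $W(\bs\omega)$: since $V$ is finite-dimensional, the vector $v$ satisfies the Serre-type relation $(x_i^-)^{\lambda(h_i)+1}v=0$ for every $i\in I$, with $\lambda=\wt(\bs\omega)$, and combined with $\tlie n^+v=0$ and the $\tlie h$-eigenvalue conditions this presents $V$ as a quotient of $W(\bs\omega)$. It therefore suffices to prove $W(\bs\omega)=U(\lie n^-[t])w$, which is precisely the output of the standard finite-dimensionality proof for $W(\bs\omega)$: the Garland identities in each $\widetilde{\mathfrak{sl}_2}$-subalgebra associated to a simple root, combined with $(x_i^-)^{\lambda(h_i)+1}w=0$, inductively push $x_{i,-s}^- w$ into $U(\lie n^-[t])w$, and the extension from simple to arbitrary positive roots follows by induction on the height of $\alpha$ using the bracket convention from \eqref{e:nonormrv}. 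The only genuinely nontrivial step in the whole argument is the Garland identity itself; everything else is PBW bookkeeping.
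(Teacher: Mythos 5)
Your proposal is correct and follows exactly the route the paper intends: the paper gives no proof, stating the proposition as a corollary of the finite-dimensionality argument for local Weyl modules in \cite{cp:weyl}, and your reduction (PBW on the triangular decomposition, commutator bookkeeping to isolate the negative loop degrees, then the Garland identities plus $(x_i^-)^{\lambda(h_i)+1}v=0$ for the base case) is that argument. The only loose point is the inductive measure: when a factor $x^-_{\beta,0}$ is consumed the new index $r-s=-s$ does not have smaller absolute value, so the induction must be run jointly on the length of the monomial and the negative degrees rather than on $\max_i s_i$ alone; this is a bookkeeping fix, not a gap.
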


\subsection{Minimal affinizations}\label{ss:min}
Since $\mathcal C_q$ is semisimple, for any object $V\in\widetilde{\mathcal C}_q$ we have an isomorphism of $U_q(\lie g)$-modules
\begin{equation*}
V\cong \opl_{\mu\in P^+}^{} V_q(\mu)^{\oplus m_\mu(V)}
\end{equation*}
for some $m_\mu(V)\in\mathbb Z_{\ge 0}$. We shall refer to the number $m_\mu(V)$ as the  multiplicity of $V_q(\mu)$ in $V$. 

Given $\lambda\in P^+$, $V\in\widetilde{\mathcal C}_q$ is said to be an affinization of $V_q(\lambda)$ if 
\begin{equation}
m_\lambda(V)=1  \qquad\text{and}\qquad m_\mu(V)\ne 0\quad\Rightarrow\quad \mu\le\lambda.
\end{equation}
Two affinizations of $V_q(\lambda)$ are said to be equivalent if they are isomorphic as $U_q(\lie g)$-modules.  The partial order on $P^+$ induces a natural partial order on the
set of  (equivalence classes of) affinizations of $V_q(\lambda)$.
Namely, if $V$ and $W$ are affinizations of $V_q(\lambda)$, say that
$V\le W$ if, for each $\mu\in P^+$, one of the following conditions holds:
\begin{enumerate}
\item $m_\mu(V)\le m_\mu(W)$;
\item if $m_\mu(V)>m_\mu(W)$, there exists $\nu>\mu$ such that $m_\nu(V)<m_\nu(W)$.
\end{enumerate}
A minimal element of this partial order is said to be a minimal affinization \cite{cha:minr2}. Clearly, a minimal affinization of $V_q(\lambda)$ must be irreducible as a $U_q(\tlie g)$-module and, hence, is of the form $V_q(\bs\omega)$ for some $\bs\omega\in\cal P^+_q$ such that $\wt(\bs\omega)=\lambda$. More generally, any quotient of $W_q(\bs{\omega})$ is an affinization of $V_q(\lambda)$ provided $\wt(\bs\omega)=\lambda$.

Suppose $\lie g$ is of type $A_n$. It follows from \cite{jim:qan} that $V_q(\lambda)$ has a unique equivalence class of minimal affinizations and $V_q(\bs{\omega})$ is a minimal affinization if, and only if, it is an irreducible $U_q(\g)$-module. To describe the elements of $\mathcal P_q^+$ with this property,  identify $I$ with $\{1,2,\dots,n\}$ in such a way that $c_{i,i+1}=-1$ for all $1\le i<n$ and $\omega_1$ is the highest weight of the standard representation of $\lie g$. Given $i,j\in I, i\le j$,  set
\begin{equation*}
_i|\lambda|_j = \sum_{k=i}^j \lambda(h_k).
\end{equation*}
If $i=1$, we may write $|\lambda|_j$ instead of $_1|\lambda|_j$ and similarly if $j=n$. For $i>j$, we set
$_i|\lambda|_j=0$ and, for $i\le j$, define
\begin{equation*}
p_{i,j}(\lambda) =  p_{j,i}(\lambda)= \ _{i+1}|\lambda|_j +\ _i|\lambda|_{j-1} + j-i.
\end{equation*}
In particular, $p_{i,j}(\lambda)=0$ if $i=j$ and
\begin{equation*}
p_{i,j}(\lambda) = \lambda(h_i)+\lambda(h_j)+ 2\
_{i+1}|\lambda|_{j-1}+ j-i \qquad\text{if}\quad i<j.
\end{equation*}

\begin{thm}\cite{cp:small}\label{teo tipo A}
$V_q(\bs\omega)$ is a minimal affinization of $V_q(\lambda)$ if and only if there exist $a_i\in\mathbb F^\times, i\in I$, and
$\epsilon=\pm 1$ such that
\begin{equation}\label{eq cond min A}
\bs\omega=\prod_{i\in I} \bs\omega_{i,a_i,\lambda(h_i)}
\qquad\text{with}\qquad \frac{a_i}{a_j}= q^{\epsilon
p_{i,j}(\lambda)} \qquad\text{for all}\quad i<j.
\end{equation}\hfill\qedsymbol
\end{thm}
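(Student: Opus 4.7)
The plan is to exploit the fact, specific to type $A_n$, that Jimbo's evaluation homomorphism $\mathrm{ev}_a : U_q(\tlie g) \to U_q(\lie g)$ exists for every $a\in\mathbb F^\times$. Combined with the remark preceding the theorem — that in type $A$, $V_q(\bs\omega)$ is minimal iff it is irreducible as a $U_q(\lie g)$-module — the task reduces to characterizing those $\bs\omega$ for which $V_q(\bs\omega)$ is irreducible over $U_q(\lie g)$. For the ``if'' direction, I would pull $V_q(\lambda)$ back along $\mathrm{ev}_a$ to obtain a $U_q(\tlie g)$-module $\mathrm{ev}_a^*V_q(\lambda)$; surjectivity of $\mathrm{ev}_a$ forces every $U_q(\tlie g)$-submodule of the pullback to be a $U_q(\lie g)$-submodule, so the pullback is $U_q(\tlie g)$-irreducible and therefore of the form $V_q(\bs\omega^{(a)})$ for some $\bs\omega^{(a)}\in\mathcal P_q^+$ with $\wt(\bs\omega^{(a)})=\lambda$. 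A direct computation of the eigenvalues of the Drinfeld loop-Cartan generators $h_{i,s}$ on the highest weight vector of $V_q(\lambda)$ makes $\bs\omega^{(a)}$ explicit and shows it has the form \eqref{eq cond min A} with $\epsilon=+1$; composing $\mathrm{ev}_a$ with the diagram involution $i\mapsto n+1-i$ reverses $\epsilon$, so both signs are realized, and the free parameter $a$ accounts for all allowed overall shifts.

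For the ``only if'' direction, suppose $V_q(\bs\omega)$ is irreducible as a $U_q(\lie g)$-module with $\wt(\bs\omega)=\lambda$. First, restricting to each rank-one diagram subalgebra $U_q(\tlie g_{\{i\}})\cong U_q(\tlie{sl}_2)$ and examining the $U_q(\lie{sl}_2)$-isotypic decomposition of the top-weight component forces the roots of each $\bs\omega_i(u)$ to form a single $q$-string (otherwise, the $\mathfrak{sl}_2$-restriction would decompose with strictly lower highest-weight summands, contradicting irreducibility over $U_q(\lie g)$). Hence $\bs\omega = \prod_{i\in I}\bs\omega_{i,a_i,\lambda(h_i)}$ for some $a_i\in\mathbb F^\times$. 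The irreducible module $V_q(\bs\omega)$ is then a constituent of the tensor product $\bigotimes_{i}V_q(\bs\omega_{i,a_i,\lambda(h_i)})$, whose factors are Kirillov-Reshetikhin modules, each of which is individually irreducible over $U_q(\lie g)$ because it is an evaluation module. Since $V_q(\bs\omega)\cong V_q(\lambda)$ as $U_q(\lie g)$-module, a dimension count forces the tensor product itself to be $U_q(\tlie g)$-irreducible, and the standard criterion for irreducibility of tensor products of two KR-modules in type $A$ then forces the ratios $a_i/a_j$ to avoid the explicit finite set of ``bad'' values, which translated into the pairs $i<j$ gives precisely \eqref{eq cond min A}.

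The principal obstacle is in the ``only if'' part: extracting a single global sign $\epsilon\in\{\pm 1\}$ from the collection of pairwise constraints on the $a_i/a_j$. The pairwise data are governed by the combinatorics of $p_{i,j}(\lambda)$, and one must check that a choice of ``increasing'' versus ``decreasing'' shift forced on one adjacent pair propagates consistently across all of $\supp(\lambda)$, ruling out mixed choices — equivalently, that the compatibility identity relating $p_{i,k}(\lambda)$, $p_{k,j}(\lambda)$ and $p_{i,j}(\lambda)$ admits no ``zig-zag'' solutions. This rigidity is what ultimately produces the strictly monotonic function $f$ mentioned in the introduction. The remaining bookkeeping — matching the Drinfeld polynomial $\bs\omega^{(a)}$ produced in the ``if'' direction to the explicit combinatorial shifts $p_{i,j}(\lambda)$ — is a routine but somewhat tedious computation in the Drinfeld presentation.
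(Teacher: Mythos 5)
The paper itself offers no proof of this theorem --- it is quoted from \cite{cp:small} and marked \qedsymbol{} --- so your proposal can only be judged on its own merits. Your reduction (minimal $\Leftrightarrow$ irreducible over $U_q(\lie g)$, by the remark preceding the theorem) and your ``if'' direction via Jimbo's evaluation homomorphism are sound in outline; pulling back $V_q(\lambda)$ along $\mathrm{ev}_a$, twisting by the diagram involution, and computing the resulting Drinfeld polynomial is essentially how sufficiency is established in the literature, although the ``routine'' identification of the eigenvalues of the $h_{i,s}$ on the highest-weight vector is a substantial computation in its own right.

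The ``only if'' direction, however, breaks at the step ``a dimension count forces the tensor product itself to be $U_q(\tlie g)$-irreducible.'' The tensor product $\bigotimes_i V_q(\bs\omega_{i,a_i,\lambda(h_i)})$ has $U_q(\lie g)$-character $\prod_i \ch(V(\lambda(h_i)\omega_i))$, which strictly dominates $\ch(V(\lambda))$ whenever $\#\supp(\lambda)\ge 2$; so if $V_q(\bs\omega)\cong V_q(\lambda)$ as a $U_q(\lie g)$-module, the tensor product \emph{cannot} be irreducible --- it necessarily has further composition factors. Already for $\lie{sl}_3$ with $\lambda=\omega_1+\omega_2$ the product of the two fundamental evaluation modules is $9$-dimensional while the minimal affinization is $8$-dimensional. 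This reverses the logic of your final step: pairwise irreducibility of KR tensor products holds for \emph{generic} ratios $a_i/a_j$ and yields the large, non-minimal affinizations, whereas minimality occurs precisely at the resonant ratios $q^{\pm p_{i,j}(\lambda)}$ where the tensor product becomes reducible and the small factor splits off. No criterion of the form ``avoid a finite set of bad ratios'' can single out the two-point set $\{q^{p_{i,j}(\lambda)},q^{-p_{i,j}(\lambda)}\}$ demanded by \eqref{eq cond min A}. Necessity has to be argued in the opposite direction: if the ratio condition fails for some pair in $\supp(\lambda)$, or if the signs forced on different adjacent pairs disagree, one must produce an outer multiplicity, e.g.\ $m_{\lambda-\vartheta_{[i,j]}}(V_q(\bs\omega))>0$, via the rank-two analysis of \cite{cha:minr2,cp:small} (compare Proposition \ref{p:Anotmintheta}). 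That argument is also what resolves the sign-coherence (``zig-zag'') problem, which you correctly identify as the principal obstacle but leave unaddressed.
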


Notice that \eqref{eq cond min A} is equivalent to saying that there
exist $a\in\mathbb F^\times$ and $\epsilon=\pm 1$ such that
\begin{equation}\label{eq cond min A'}
\bs\omega=\prod_{i\in I} \bs\omega_{i,a_i,\lambda(h_i)}
\qquad\text{with}\qquad a_i=aq^{\epsilon p_{i,n}(\lambda)}
\qquad\text{for all}\quad i\in I.
\end{equation}
If $\#\ \supp(\lambda)>1$, the pair $(a,\epsilon)$ in
\eqref{eq cond min A'} is unique. In that case, if $\bs\omega$
satisfies \eqref{eq cond min A} with $\epsilon=1$, we say that
$V_q(\bs\omega)$ is a decreasing minimal affinization.
Otherwise, we say $V_q(\bs\omega)$ is an increasing minimal affinization. 
If $\#\ \supp(\lambda)= 1$, $\bs\omega$ can be represented in the form \eqref{eq cond min A'} by two choices of pairs $(a,\epsilon)$, one for each value of $\epsilon$. We do not fix a preferred presentation in that case. We consider $\bs{\omega}$ to be simultaneously increasing and decreasing if $\#\ \supp(\lambda)\le 1$.

Assume now that $\lie g$ is of type $D$ or $E$, let $i_*$ be the trivalent node, $\bs{\omega}\in\mathcal P_q^+$ and $\lambda=\wt(\bs{\omega})$. We will say that $\bs{\omega}$ is preminimal if $V_q(\bs{\omega}_J)$ is a minimal affinization for any connected subdiagram $J$ of type $A$ such that $J\setminus\{i_*\}$ is connected. It was proved in \cite[Proposition 4.2]{cp:minsl} that, if $V_q(\bs{\omega})$ is a minimal affinization, then $\bs{\omega}$ is preminimal. Henceforth, we assume  $\bs{\omega}$ is preminimal. It will be proved in Lemma \ref{l:many0} that
\begin{equation}\label{e:Jmu}
m_\mu(V_q(\bs{\omega}))>0 \quad\Rightarrow\quad J_\mu \ \text{is connected, where}  \quad J_\mu = \rupp(\lambda-\mu).
\end{equation}

Given $i\in\partial I$, let
\begin{equation*}
I_i = \overline{\partial I\setminus\{i\}}
\end{equation*}
Thus, $I_i$ is the maximal connected subdiagram of type $A$ which does not contain $i$, $i_*\in I_i$, and $I_i\setminus\{i_*\}$ is disconnected.  We will say that $\bs{\omega}$ is $i$-minimal if $V_q(\bs{\omega}_{I_i})$  is a minimal affinization. Define the minimality order of  $\bs{\omega}$  as 
\begin{equation*}
{\rm mo}(\bs{\omega}) = \#\{i\in \partial I: V_q(\bs{\omega}) \text{ is  $i$-minimal}\}.
\end{equation*}
If $\bs{\omega}$ is preminimal of minimality order $k$, we shall simply say  $\bs{\omega}$ is preminimal of order $k$. 
The minimality order of $V_q(\bs{\omega})$ is set to be ${\rm mo}(\bs{\omega})$. Note that, if ${\rm mo}(\bs{\omega}) =3$, then $V_q(\bs{\omega}_J)$ is a minimal affinization for every connected sudiagram $J$ of type $A$. One easily checks using Theorem \ref{teo tipo A} that
\begin{equation}
{\rm mo}(\bs{\omega}) = 3 \quad\Rightarrow\quad \overline{\supp}(\lambda) \text{ is of type }A \quad\Rightarrow\quad {\rm mo}(\bs{\omega})\ge 2.
\end{equation}
The next Theorem was proved in \cite{cp:minsl,cp:minirr}.

\begin{thm}\label{t:regmin}\hfill \vspace{-5pt}
	
	\begin{enumerate}[(a)]
		\item If $\overline{\supp}(\lambda)$ is of type $A$, then $V_q(\bs{\omega})$ is a minimal affinization if and only if $\bs{\omega}$ is is preminimal of order $3$. In particular, $V_q(\lambda)$ has a unique equivalence class of minimal affinizations.
		\item If $\overline{\supp}(\lambda)$ is not of type $A$ and $\lambda(h_{i_*})\ne 0$, then $V_q(\bs{\omega})$ is a minimal affinization if and only if $\bs{\omega}$ is preminimal of order $2$. In particular, $V_q(\lambda)$ has $3$ equivalence classes of minimal affinizations.
		\item If $\lie g$ is of type $D_4$, $\overline{\supp}(\lambda)$ is not of type $A$, $\lambda(h_{i_*})= 0$, and $V_q(\bs{\omega})$ is a minimal affinization, the order of $\bs{\omega}$ is either $2$ or $1$. Moreover, the number of equivalence classes of minimal affinizations of order $1$ grows unboundedly with $\lambda$.\hfill\qedsymbol
	\end{enumerate}
\end{thm}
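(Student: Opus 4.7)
The strategy is to combine \cite[Prop.~4.2]{cp:minsl} (minimal implies preminimal) with the type $A$ classification of Theorem \ref{teo tipo A}, reducing everything to analyses on the diagram subalgebras via the projections $\pi_J$ applied on each of the three arms emanating from $i_*$. A preminimal $\bs{\omega}$ is essentially determined, up to equivalence, by its three $q$-strings on these arms; minimality is obstructed by the multiplicities $m_\mu(V_q(\bs{\omega}))$ for $\mu<\lambda$, which I would control via $q$-character calculus and via characters of tensor products of Kirillov--Reshetikhin modules attached to each arm, together with irreducibility criteria of the type reviewed later in the paper.

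For (a), since $\overline{\supp}(\lambda)$ is of type $A$, we have $\supp(\lambda)\subseteq I_i$ for a unique $i\in\partial I$. By preminimality and Theorem \ref{teo tipo A}, the $q$-strings on the two $I_i$-arms must assemble into a type $A$ minimal pattern, while the arm toward $i$ contributes only trivial data since $\lambda(h_j)=0$ on that arm. Such $\bs{\omega}$ is automatically of order $3$: for any $j\in\partial I$, $\bs{\omega}_{I_j}$ restricts to a type $A$ minimal configuration on $\overline{\supp}(\lambda)\cap I_j$ with trivial complementary data, hence is itself minimal by Theorem \ref{teo tipo A}. Uniqueness of the equivalence class reduces to the type $A$ fact that increasing and decreasing minimal affinizations are equivalent as $U_q(\lie g)$-modules.

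For (b), $\supp(\lambda)$ meets all three arms and contains $i_*$, and a monotonicity obstruction rules out order $3$: a single monotonic function $f$ on $\overline{\supp}(\lambda)$ cannot coherently place all three centers adjacent to $i_*$ relative to $f(i_*)$. For each $i\in\partial I$, one constructs a candidate preminimal $\bs{\omega}$ of order $2$ by taking $f$ monotonic on both arms comprising $I_i$ and any compatible string on the arm toward $i$; the three constructions yield three equivalence classes, distinguished by which $I_i$-restriction realizes the type $A$ minimal pattern. To verify minimality, one shows that the corresponding tensor product of three Kirillov--Reshetikhin modules (one per arm, ordered by $f$) is irreducible, using the tensor product irreducibility criteria surveyed later in the paper; character considerations then rule out any strictly smaller competing affinization.

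For (c), in type $D_4$ with $\lambda(h_{i_*})=0$, monotonicity again excludes order $3$, and constructions analogous to (b) produce order $2$ minimal affinizations (these are the \emph{incoherent} ones in the present paper's terminology; the coherent order-$2$ ones turn out to fail minimality, contrary to the Chari--Pressley conjecture, as Theorem \ref{t:cohnotmin} will establish). Since $i_*\notin\supp(\lambda)$, the $q$-string at $i_*$ is a free parameter, and varying its center and length produces an infinite family of order $1$ preminimal affinizations; a direct multiplicity computation shows that infinitely many of them are not dominated by any order $2$ affinization, giving the unbounded growth claim. The main obstacle throughout is the minimality verification---ruling out strictly smaller competing affinizations requires sharp control of the multiplicities $m_\mu$ via $q$-character techniques and irreducibility of intricate tensor products of Kirillov--Reshetikhin modules, which is precisely the technical content that the remainder of the paper develops.
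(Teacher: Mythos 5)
First, note that the paper does not prove Theorem \ref{t:regmin} at all: the \qedsymbol{} after the statement marks an omitted proof, and the result is simply quoted from \cite{cp:minsl,cp:minirr}. So your sketch cannot be measured against an internal argument, but it can be measured against the definitions in Section \ref{ss:min}, and there it contains concrete errors. In part (a) you assert that a preminimal $\bs{\omega}$ with $\overline{\supp}(\lambda)$ of type $A$ is ``automatically of order $3$,'' because ``the $q$-strings on the two $I_i$-arms must assemble into a type $A$ minimal pattern.'' This is false: preminimality only constrains the restrictions $\bs{\omega}_J$ for connected type-$A$ subdiagrams $J$ with $J\setminus\{i_*\}$ connected, i.e.\ subdiagrams of a single arm $[i_*,j]$; whether the two arm configurations glue across $i_*$ into a type-$A$ minimal pattern on $I_i$ is precisely the extra condition of $i$-minimality, which can fail. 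The paper itself only records the implication ${\rm mo}(\bs{\omega})=3\Rightarrow\overline{\supp}(\lambda)$ of type $A$ $\Rightarrow{\rm mo}(\bs{\omega})\ge 2$, and the remark after Conjecture \ref{cj:cohnotmin} is devoted exactly to the order-$2$ preminimal $\bs{\omega}$ with $\supp(\lambda)\subseteq I_k$ that are \emph{not} minimal. Your argument therefore proves neither direction of (a): the ``only if'' direction requires showing these order-$2$ configurations fail minimality, and the ``if'' direction is not reducible to the type-$A$ evaluation-module argument, since for $\lie g$ of type $D$ or $E$ the module $V_q(\bs{\omega})$ is typically reducible over $U_q(\lie g)$ even under the hypothesis of (a).

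A second concrete error occurs in (c): since $\lambda(h_{i_*})=0$, the polynomial $\bs{\omega}_{i_*}(u)$ has degree zero and equals $1$, so there is no ``$q$-string at $i_*$'' to serve as a free parameter; the order-$1$ family of \cite{cp:minirr} is parametrized by the relative positions of the three arm strings, not by data at $i_*$. More globally, in all three parts the actual verification of minimality --- and of non-minimality of the competitors --- is waved away with ``character considerations then rule out any strictly smaller competing affinization.'' This is the entire content of \cite{cp:minsl,cp:minirr}: one must bound the outer multiplicities $m_\mu$ of the candidates and exhibit, for every non-candidate, a weight at which it is strictly larger, and the tensor product of the three boundary Kirillov--Reshetikhin modules you invoke is generally \emph{not} irreducible (cf.\ Lemmas \ref{l:tripletpinc} and \ref{l:tripletpcoh}, whose whole point is to enumerate its several simple factors), nor would its irreducibility by itself establish minimality in the partial order on affinizations. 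As written, the proposal is a plausible outline of where the difficulties lie, but it does not constitute a proof of any of the three statements.
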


Outside type $A$, a minimal affinization is typically not irreducible as a $U_q(\lie g)$-module even under the assumption of part (a) of Theorem \ref{t:regmin}. If $\lambda$ satisfies the hypothesis of either part (a) or (b) of Theorem \ref{t:regmin} it is said to be regular. Otherwise it is said to be irregular. If $\lie g$ is of type $D$ and $\lambda$ is regular, the character of the minimal affinizations were computed in \cite{naoi:grlim,naoi:D} in terms of Demazure operators.

\subsection{Coherent and Incoherent Affinizations}\label{ss:cohinc} Assume $\lie g$ is of type $D$ or $E$. For a connected subdiagram $J\subseteq I$ of type $A$, we shall say that a total ordering $<$ on $J$ is monotonic if, for all $i,k\in J$, we have
\begin{equation*}
 c_{i,k}=-1 \text{ and } i<k \qquad\Rightarrow\qquad \{j\in J:i<j<k\}=\emptyset.
\end{equation*}
For each such subdiagram there are exactly two choices of monotonic orderings and the maximum and minimum of a monotonic ordering belong to $\partial J$.
Each monotonic ordering on $J$ with the corresponding order-preserving identification of $J$ with $\{1,\dots,n\}, n=\#J$, induces an isomorphism of algebras
\begin{equation*}
 U_q(\tlie g_J)\cong U_q(\tlie{sl}_{n+1}).
\end{equation*}
Then, given $\bs{\omega}\in\mathcal P_q^+$ such that $V_q(\bs{\omega}_J)$ is a minimal affinization, 
we shall say that $\bs{\omega}_J$ is increasing or decreasing, with respect to the given ordering, if $V_q(\bs{\omega}_J)$ is an increasing or a decreasing minimal affinization for $U_q(\tlie{sl}_{n+1})$ after pulling back the action by the above isomorphism. If $J'$ is another connected subdiagram of type $A$, we shall say that a choice of monotonic orderings on $J$ and $J'$ is coherent if they coincide on $J\cap J'$.  In that case, we shall say that $J$ and $J'$ are coherently ordered. Evidently, given any two intersecting such diagrams, there exists at least one choice of coherent orderings. The following lemma is also easily established.

\begin{lem}
	Let $J,J'$ be coherently ordered connected subdiagrams of type $A$. Suppose $\bs{\omega}\in\mathcal P_q^+$ is such that both $V_q(\bs{\omega}_J)$ and $V_q(\bs{\omega}_{J'})$ are minimal affinizations and that
	\begin{equation*}
	  \#(\supp(\wt(\bs{\omega}))\cap J\cap J')>1. 
	\end{equation*}
	Then,  $\bs{\omega}_J$ is increasing if, and only if, $\bs{\omega}_{J'}$ is increasing.\hfill\qed
\end{lem}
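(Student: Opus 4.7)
The plan is to select two distinguished nodes in the common support and reduce the statement to the uniqueness of the parameter $\epsilon$ in Theorem \ref{teo tipo A}. Write $\lambda = \wt(\bs{\omega})$. The hypothesis supplies two distinct nodes $i, j \in \supp(\lambda) \cap J \cap J'$, which I may take with $i < j$ in the coherent ordering (well-defined, since the two monotonic orderings agree on $J \cap J'$). Because $\lambda(h_i),\lambda(h_j) > 0$, the factors $\bs{\omega}_i$ and $\bs{\omega}_j$ of $\bs{\omega}$ are genuine $q$-strings $\bs{\omega}_{i,a_i,\lambda(h_i)}$ and $\bs{\omega}_{j,a_j,\lambda(h_j)}$, whose centers $a_i, a_j \in \mathbb F^\times$ are determined by $\bs{\omega}$ alone and do not depend on any ambient subdiagram.

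Next, I would apply Theorem \ref{teo tipo A} twice. Under the isomorphism $U_q(\tlie g_J) \cong U_q(\tlie{sl}_{|J|+1})$ induced by the chosen monotonic ordering on $J$, the theorem applied to $V_q(\bs{\omega}_J)$ gives $a_i/a_j = q^{\epsilon_J p_{i,j}(\lambda_J)}$, where $\epsilon_J = +1$ if $\bs{\omega}_J$ is decreasing and $\epsilon_J = -1$ if it is increasing; the analogous identity $a_i/a_j = q^{\epsilon_{J'} p_{i,j}(\lambda_{J'})}$ holds for $J'$. Since both left-hand sides equal the same ratio coming from $\bs{\omega}$, we obtain
\begin{equation*}
 q^{\epsilon_J\, p_{i,j}(\lambda_J)} = q^{\epsilon_{J'}\, p_{i,j}(\lambda_{J'})}.
\end{equation*}

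To conclude I would verify that $p_{i,j}(\lambda_J) = p_{i,j}(\lambda_{J'})$ and that this common value is strictly positive. The Dynkin diagram of $\lie g$ (being of type $D$ or $E$) is a tree, so the unique path $[i,j]$ joining $i$ to $j$ is forced to lie inside every connected subdiagram containing $\{i,j\}$; in particular, $[i,j] \subseteq J \cap J'$, and by coherence the two monotonic orderings restrict to one and the same monotonic ordering on $[i,j]$. The formula
\begin{equation*}
 p_{i,j}(\lambda_J) = \lambda(h_i) + \lambda(h_j) + 2 \sum_{k \in [i,j]\setminus\{i,j\}} \lambda(h_k) + d(i,j)
\end{equation*}
then produces the same value whether computed inside $J$ or inside $J'$, and this common value is at least $d(i,j) \geq 1$, hence strictly positive. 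Cancelling exponents forces $\epsilon_J = \epsilon_{J'}$, which is precisely the assertion. The only potentially delicate step is the identification $p_{i,j}(\lambda_J) = p_{i,j}(\lambda_{J'})$, where both the tree structure of the ambient diagram and the coherence of the chosen orderings enter the argument; everything else is a direct appeal to Theorem \ref{teo tipo A}.
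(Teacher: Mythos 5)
Your argument is correct and is precisely the intended one: the paper leaves this lemma as "easily established," and the natural route is exactly what you do — fix two nodes $i<j$ in $\supp(\wt(\bs{\omega}))\cap J\cap J'$, note that the $q$-string centers $a_i,a_j$ are intrinsic to $\bs{\omega}$, and compare the two instances of Theorem \ref{teo tipo A}, using that $p_{i,j}$ depends only on $\lambda$ restricted to the path $[i,j]\subseteq J\cap J'$ and is strictly positive, so the sign $\epsilon$ is forced to agree. Your handling of the two delicate points (well-definedness of $i<j$ via coherence, and the independence of $p_{i,j}$ from the ambient subdiagram via the tree structure) is exactly what makes the statement true.
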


Note that the assumption on the cardinality is essential in the above lemma. 

Suppose $\bs{\omega}\in\mathcal P_q^+$ is preminimal of order $2$, let $k\in\partial I$ be the node such that $\bs\omega$ is not  $k$-minimal, and choose coherent monotonic orderings on $I_l, l\in\partial I\setminus\{k\}$.  
We shall say that $\bs{\omega}$ is coherent if $V_q(\bs{\omega}_{I_l}), l\ne k$, are either both increasing or both decreasing minimal affinizations. Otherwise, we say that $\bs{\omega}$ is incoherent. Note that the property of being coherent is intrinsic to $\bs{\omega}$, i.e., it does not depend on the choice of coherent monotonic orderings on $I_l, l\in\partial I_k$. 
Moreover, it follows from the previous lemma that
\begin{equation}\label{e:noincreg}
  \bs{\omega} \text{ incoherent} \qquad\Rightarrow\qquad \#(\supp(\bs{\omega})\cap [k,i_*])\le 1.
\end{equation}
 We can graphically represent coherent Drifeld polynomials by the pictures
 
 \setlength{\unitlength}{.3cm}
 \begin{equation}
 \begin{picture}(5,2)(17,0)
 \put(10,0.5){\circle{1}}\put(10.5,0.5){\line(1,0){3}}\put(14,0.5){\circle{1}}
 \put(14.2,1){\line(2,3){1.2}}\put(14.2,0){\line(2,-3){1.2}}\put(15.7,3.2){\circle{1}}\put(15.7,-2.28){\circle{1}}
 
 \multiput(10.5,1)(.8,.4){6}{\qbezier(0,0)(0,0)(.4,.2)}
 \multiput(10.5,0)(.8,-.4){6}{\qbezier(0,0)(0,0)(.4,-0.2)}

 \put(10.7,1.1){\vector(-2,-1){0.4}} 
 \put(10.7,-0.1){\vector(-2,1){0.4}}  
 
 \put(8.7,0.2){{\small${k}$}} \put(14.7,0.2){{\small${i_*}$}} 
 \end{picture}
 \begin{picture}(5,2)(3,0)
 \put(10,0.5){\circle{1}}\put(10.5,0.5){\line(1,0){3}}\put(14,0.5){\circle{1}}
 \put(14.2,1){\line(2,3){1.2}}\put(14.2,0){\line(2,-3){1.2}}\put(15.7,3.2){\circle{1}}\put(15.7,-2.28){\circle{1}}

 \multiput(10.5,1)(.8,.4){6}{\qbezier(0,0)(0,0)(.4,.2)}
 \multiput(10.5,0)(.8,-.4){6}{\qbezier(0,0)(0,0)(.4,-0.2)}
 
 \put(14.7,3.1){\vector(2,1){0.4}}
 
 \put(14.7,-2.1){\vector(2,-1){0.4}}
 
 \put(8.7,0.2){{\small${k}$}} \put(14.7,0.2){{\small${i_*}$}} 
 \end{picture}
 \end{equation}\vspace{5pt}

\noindent where the first means it is decreasing towards $k$ and the second it is decreasing away from $k$. Similarly, incoherent ones can be represented by 

\begin{equation}\label{e:incpic}
\begin{picture}(5,2)(17,0)
\put(10,0.5){\circle{1}}\put(10.5,0.5){\line(1,0){3}}\put(14,0.5){\circle{1}}
\put(14.2,1){\line(2,3){1.2}}\put(14.2,0){\line(2,-3){1.2}}\put(15.7,3.2){\circle{1}}\put(15.7,-2.28){\circle{1}}

\multiput(10.5,1)(.8,.4){6}{\qbezier(0,0)(0,0)(.4,.2)}
\multiput(10.5,0)(.8,-.4){6}{\qbezier(0,0)(0,0)(.4,-0.2)}

\put(14.7,3.1){\vector(2,1){0.4}}
\put(10.7,-0.1){\vector(-2,1){0.4}}  

\put(8.7,0.2){{\small${k}$}} \put(14.7,0.2){{\small${i_*}$}} 

\end{picture}
\begin{picture}(5,2)(3,0)
\put(10,0.5){\circle{1}}\put(10.5,0.5){\line(1,0){3}}\put(14,0.5){\circle{1}}
\put(14.2,1){\line(2,3){1.2}}\put(14.2,0){\line(2,-3){1.2}}\put(15.7,3.2){\circle{1}}\put(15.7,-2.28){\circle{1}}

\multiput(10.5,1)(.8,.4){6}{\qbezier(0,0)(0,0)(.4,.2)}
\multiput(10.5,0)(.8,-.4){6}{\qbezier(0,0)(0,0)(.4,-0.2)}

\put(10.7,1.1){\vector(-2,-1){0.4}}  
\put(14.7,-2.1){\vector(2,-1){0.4}}

\put(8.7,0.2){{\small${k}$}} \put(14.7,0.2){{\small${i_*}$}} 
\end{picture}
\end{equation}\vspace{5pt}

\noindent These pictures are inspired by those in the main theorem of \cite{cp:minirr}. More involved pictures appear in the main theorem of \cite{fer:thesis}.

\begin{con}\label{cj:cohnotmin}
	Let $\bs{\omega}\in\mathcal P_q^+$ be preminimal of order $2$ and let $k\in\partial I$ be the node such that $\bs{\omega}$ is not $k$-minimal. If 	$\#(\supp(\wt(\bs{\omega}))\cap [i_*,k])\le 1$ and $\bs{\omega}$ is coherent, $V_q(\bs{\omega})$ is not a minimal affinization. \endd
\end{con}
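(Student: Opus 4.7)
The plan is to exhibit, for each coherent preminimal $\bs{\omega}$ of order $2$ satisfying the hypotheses, an incoherent preminimal $\bs{\omega}'$ of the same classical weight $\lambda=\wt(\bs{\omega})$ such that $V_q(\bs{\omega}) > V_q(\bs{\omega}')$ strictly in the partial order on affinizations of $V_q(\lambda)$; this immediately precludes $V_q(\bs{\omega})$ from being minimal. The incoherent partner is obtained from $\bs{\omega}$ by "flipping" the monotonic pattern on the extremal branch containing $k$: since $\#(\supp(\lambda)\cap[i_*,k])\le 1$, the preceding lemma guarantees that this flip produces a genuine incoherent preminimal of order $2$ (rather than forcing a coincidence on $J\cap J'$).

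The comparison rests on choosing the specific dominant weight $\nu$ of Proposition \ref{p:outer}. First I would invoke Proposition \ref{p:outer} to conclude the strict inequality
\[ m_\nu(V_q(\bs{\omega})) > m_\nu(V_q(\bs{\omega}')). \]
Then, to upgrade this pointwise inequality to the partial order comparison, I would need to check that no larger weight $\mu > \nu$ reverses the inequality. For this, one uses Lemma \ref{l:many0} to restrict attention to those $\mu$ with $J_\mu=\rupp(\lambda-\mu)$ connected, together with the diagram subalgebra analysis of Section \ref{ss:diag}: for each such $\mu$ one restricts both $V_q(\bs{\omega})$ and $V_q(\bs{\omega}')$ to the appropriate type-$A$ or type-$D$ subalgebra $U_q(\lie g_{J_\mu})$ and compares summands using the minimality (in type $A$) or preminimality of the restrictions. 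The irreducibility criteria of \cite{hmp:tpa} and \cite{cha:braid}, reviewed in Section \ref{ss:tp}, are invoked to identify certain tensor products inside these restrictions as irreducible, which pins down the multiplicities $m_\mu$ on both sides and shows they agree (or at least agree in the direction needed).

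The two-sided control on $m_\nu$ in Proposition \ref{p:outer} is carried out by separate arguments on the two sides. For the upper bound on the incoherent side, I would use the graded limit analysis of Sections \ref{ss:classlim} and \ref{ss:upper}: the quotient of the coherent graded limit defined in Section \ref{ss:grlim} projects onto the incoherent graded limit under the hypotheses, so its $\nu$-weight space bounds $m_\nu(V_q(\bs{\omega}'))$ from above; the dimension of the corresponding weight space is then computed using the modified Kostant partition function of Proposition \ref{p:parts}. For the lower bound on the coherent side, I would construct an explicit highest-weight vector of weight $\nu$ inside $V_q(\bs{\omega})$ by extracting it from a suitable tensor product of Kirillov--Reshetikhin modules associated to extremal nodes, using the qcharacter machinery of Section \ref{ss:qchar} and the coherent tensor product analysis of Section \ref{ss:tpcoh}, and verifying non-vanishing via an $\ell$-weight multiplicity count.

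The main obstacle I foresee is precisely the matching of the two bounds on $m_\nu$ in Proposition \ref{p:outer}: on the coherent side one must produce enough $\ell$-weights of classical weight $\nu$ in the tensor product realization while avoiding cancellation against higher composition factors, and on the incoherent side one must show the graded-limit upper bound is actually attained. This is where the hypothesis on $\supp(\lambda)\cap[i_*,k]$ intervenes decisively, because it is what lets Lemma \ref{l:remroots} eliminate the root contributions that would otherwise spoil the Kostant partition function count in type $E$; relaxing those hypotheses would force a much more delicate bookkeeping, as noted in the introduction. Once Proposition \ref{p:outer} is in hand, the comparison of higher $\mu$-multiplicities and the conclusion $V_q(\bs{\omega})>V_q(\bs{\omega}')$ follow by the diagrammatic reduction outlined above.
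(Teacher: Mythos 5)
The statement you are proving is Conjecture \ref{cj:cohnotmin}, which the paper does \emph{not} prove in general: it only establishes the conclusion under the additional hypotheses of Theorem \ref{t:cohnotmin} (type $D$; or $I^\lambda$ of type $D_4$ with $d(k,i_*)>1$; or type $E_6$ with $\supp(\lambda)=\partial I$). Your strategy --- compare $V_q(\bs{\omega})$ with its incoherent partner $\bs{\varpi}$, separate the multiplicities at $\nu$ via Proposition \ref{p:outer}(c)--(d), and control all other $\mu$ by diagram-subalgebra reduction plus the tensor-product irreducibility criteria --- is exactly the paper's proof of that restricted theorem, but it does not close the full conjecture, and the obstruction is concrete. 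The step ``check that no larger weight $\mu>\nu$ reverses the inequality'' rests on Lemma \ref{l:<nuk} (every $\mu<\lambda$ with $m_\mu(V)>0$ satisfies $\mu\le\nu_k$), which in turn rests on Lemma \ref{l:remroots}; the paper states explicitly that the conclusion of Lemma \ref{l:<nuk} is \emph{false} outside hypotheses (i)--(iii), and that these are necessary for Lemma \ref{l:remroots}. You attribute the success of Lemma \ref{l:remroots} to the hypothesis $\#(\supp(\lambda)\cap[i_*,k])\le 1$ of the conjecture, but that hypothesis alone does not suffice --- in type $E$ with general support there are roots $\alpha$ with $l_\lambda\notin\rupp(\alpha)$ for which $x_{\alpha,1}^-v\ne 0$ cannot be excluded, and new maximal weights besides $\nu_k$ appear. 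Likewise, the irreducibility input you invoke (Theorem \ref{t:tpa} and Corollary \ref{c:tpd}) is only available for the type-$A$ and type-$D$ subdiagrams occurring under those extra hypotheses. So the proposal proves Theorem \ref{t:cohnotmin}, not the conjecture as stated.

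A second, smaller discrepancy: for the incoherent-side vanishing $m_\nu(V_q(\bs{\varpi}))=0$ you propose to show that the quotient $N_k(\lambda)$ of the coherent graded limit projects onto the incoherent graded limit and read off the bound there. That is precisely the direction the authors say they were unable to carry out (Remark \ref{r:cpconj}); the paper instead proves Proposition \ref{p:outer}(d) by embedding $V_q(\bs{\varpi})$ into a tensor product of boundary Kirillov--Reshetikhin modules and accounting for all $\ell$-weights of classical weight $\nu$ among the factors $V_q(\bs{\omega}_l),V_q(\bs{\omega}_m)$ (Section \ref{ss:tpinc}), with the graded-limit statement (Lemma \ref{l:increl}) obtained afterwards as a consequence. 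You would need to replace your proposed mechanism by this qcharacter count, or supply an independent proof of the projection, which is currently open. Finally, the degenerate case $\supp(\lambda)\cap[i_*,k]=\emptyset$ (where $k_\lambda\notin\supp(\lambda)$ and Proposition \ref{p:outer}(c)--(d) do not apply) needs the separate argument via Theorem \ref{t:regmin}(a), which your proposal does not mention.
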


\begin{rem}
Note that, if  $\supp(\wt(\bs{\omega}))\subseteq I_k$, this conjecture follows from part (a) of Theorem \ref{t:regmin}, since the minimal affinizations have minimality order $3$. In that case, note that, if $\#(\supp(\wt(\bs{\omega}))\cap I_l)\ge 2$ for $l\in\partial I_k$, a graphic representation of the Drinfeld polynomial of the minimal affinizations follows the picture \eqref{e:incpic}. In other words, we can informally say that the minimal affinizations are incoherent, even though the notion is not defined when the minimality order is $3$. 

It follows from the above paragraph that it remains to prove the conjecture in the case $\#(\supp(\wt(\bs{\omega}))\cap [i_*,k])= 1$ and $i_*\notin \supp(\wt(\bs{\omega}))$. If $\#(\supp(\wt(\bs{\omega}))\cap [i_*,k])>1$, the conclusion of the conjecture is false by part (b) of Theorem \ref{t:regmin}. In fact, in the context of Theorem \ref{t:regmin}(b), $V_q(\bs{\omega})$ is a minimal affinization if and only if $\bs{\omega}$ is coherent. We shall see in a forthcoming publication that the conclusion of the conjecture remains false if  $\#(\supp(\wt(\bs{\omega}))\cap [i_*,k])>1$ and $i_*\notin \supp(\wt(\bs{\omega}))$ (see also \cite{fer:thesis}). Note that this situation is realizable only if $\#I>4$.\endd
\end{rem}

We will prove that the conclusion of Conjecture \ref{cj:cohnotmin} holds under certain extra hypotheses in the case $\lie g$ is of type $E$. To state them, we introduce the following notation. 
Given $\lambda\in P^+$ and $i\in\partial I$, if $\supp(\lambda)\cap (i_*,i]\ne\emptyset$, let $i_\lambda \in \supp(\lambda)\cap (i_*,i]$ be the element which is closest to $i_*$. Otherwise, set $i_\lambda=i$. Set also
\begin{equation*}
I^\lambda=\overline{\{i_\lambda:i\in \partial I\}}\qquad \text{and}\qquad I_i^\lambda = I_i\cap I^\lambda \quad\text{for}\quad i\in\partial I.
\end{equation*}
Note  $i_\lambda\in I_j^\lambda$ if, and only if, $j\ne i$. 
The main result of this paper is:

\begin{thm}\label{t:cohnotmin}
		Let $\bs{\omega}\in\mathcal P_q^+$ be preminimal of order $2$ and let $k\in\partial I$ be the node such that $\bs{\omega}$ is not $k$-minimal. Set $\lambda=\wt(\bs{\omega})$ and assume $\#(\supp(\lambda)\cap [i_*,k])= 1, i_*\notin\supp(\lambda)$, and $\bs{\omega}$ is coherent. Then, $V_q(\bs{\omega})$ is not a minimal affinization provided either one of the following hypothesis holds:
	\begin{enumerate}[(i)]
		\item $\lie g$ is of type $D$,
		\item $I^\lambda$ is of type $D_4$ and $d(k,i_*)>1$,
		\item $\lie g$ is of type $E_6$ and $\supp(\lambda)=\partial I$. 
	\end{enumerate}
	 More precisely, $V_q(\bs{\omega})>V_q(\bs{\varpi})$ for any $\bs{\varpi}\in\mathcal P_q^+$ which is preminimal of order $2$, not  $k$-minimal, incoherent, and such that $\wt(\bs{\varpi})=\lambda$.\endd
\end{thm}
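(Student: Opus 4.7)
The plan is to establish $V_q(\bs{\omega}) > V_q(\bs{\varpi})$ by exhibiting a specific dominant weight $\nu < \lambda$ which serves as a witness separating the two affinizations: one for which $m_\nu(V_q(\bs{\omega})) > m_\nu(V_q(\bs{\varpi}))$ and such that no higher dominant $\mu$ provides compensation in the sense of clause (2) of the partial order defining $\le$. The weight $\nu$ will be the distinguished dominant weight furnished by Proposition \ref{p:outer}, obtained from $\lambda$ by subtracting a carefully chosen sum of simple roots reflecting the $D_4$ subdiagram that $\supp(\lambda)$ bounds. This construction is designed precisely to detect the distinction between coherent and incoherent $q$-string patterns.

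The first step, carried out in Section \ref{ss:KR}, is to develop the qcharacter computations and tensor-product irreducibility facts for Kirillov--Reshetikhin modules associated to extremal nodes that are needed in the sequel. Using these, I would then, in Section \ref{ss:upper}, extract upper bounds for $m_\nu(V_q(\bs{\omega}))$ and $m_\nu(V_q(\bs{\varpi}))$ via graded limits. For the coherent side the presentation of coherent graded limits from \cite{mou,naoi:D} yields the bound directly; for the incoherent side I would introduce a quotient of the coherent graded limit as outlined in Section \ref{ss:grlim} and show, under each of the hypotheses (i)--(iii), that it surjects onto the incoherent graded limit. The calculation of $\lie g$-weight space dimensions in terms of the modified Kostant partition function (Proposition \ref{p:parts} and \eqref{e:dimVnuall}) converts these module-theoretic bounds into explicit numerical estimates.

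The core of the argument is in Sections \ref{ss:tpinc} and \ref{ss:tpcoh}, where I would realize $V_q(\bs{\omega})$ and $V_q(\bs{\varpi})$ as simple quotients of explicit tensor products consisting of (i) minimal affinizations for the diagram subalgebras $U_q(\tlie g_{I_l})$, $l\in\partial I\setminus\{k\}$, of type $A$, and (ii) the Kirillov--Reshetikhin module at node $k$ determined by $\bs{\omega}_k = \bs{\varpi}_k$. The irreducibility criterion of \cite{hmp:tpa} controls the type-$A$ tensor factors while that of \cite{cha:braid} handles the type-$D$ Kirillov--Reshetikhin pairs; together they produce matching \emph{lower} bounds for both multiplicities. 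A direct comparison at the $\nu$-weight space, combined with the upper bounds from Section \ref{ss:upper}, yields the strict inequality $m_\nu(V_q(\bs{\omega})) > m_\nu(V_q(\bs{\varpi}))$. Finally, to rule out any compensating $\mu > \nu$ and complete the proof in Section \ref{ss:cohnotmin}, I would appeal to Lemma \ref{l:many0}, which forces $J_\mu$ to be connected for every $\mu$ of positive multiplicity in $V_q(\bs{\omega})$; this connectedness constraint together with $\#(\supp(\lambda)\cap[i_*,k])=1$ sufficiently restricts the candidate weights $\mu$ above $\nu$ so that no violation of the ordering can occur.

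The main obstacle is the type $E$ case. The hypotheses (ii) and (iii) enter exclusively through Lemma \ref{l:remroots}, which regulates which combinations of roots may contribute to $\ell$-weight spaces relevant for $\nu$. Without these hypotheses the root combinatorics beneath the $D_4$ subdiagram becomes much richer: several additional ``spurious'' root sums are allowed, the qcharacter upper bounds from Section \ref{ss:upper} cease to be sharp, and the Kostant-partition-function count from Proposition \ref{p:parts} no longer suffices to distinguish coherent from incoherent. Removing the extra hypotheses would therefore require a substantially more delicate analysis of $\ell$-weight multiplicities, which is why the theorem is formulated conditionally in type $E$.
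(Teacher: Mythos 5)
Your overall architecture matches the paper's: the separating weight is $\nu=\lambda-\vartheta_{I^\lambda}$, Proposition \ref{p:outer}(c)--(d) gives $m_\nu(V_q(\bs{\omega}))=1$ versus $m_\nu(V_q(\bs{\varpi}))=0$, and the remaining weights are handled by reduction to diagram subalgebras. However, there are two concrete gaps. First, your proposed route to the incoherent-side bound --- constructing the quotient $N_k(\lambda)$ of $M_k(\lambda)$ and proving that it surjects onto the incoherent graded limit --- is precisely the step the paper states it cannot carry out directly (see Remarks \ref{r:demazure} and \ref{r:cpconj}); Lemma \ref{l:increl} is derived in the paper as a \emph{consequence} of Proposition \ref{p:outer}(d), not used to prove it. The actual proof of part (d) reduces to $I^\lambda$ (so that $\supp(\lambda)=\partial I$ and all factors are Kirillov--Reshetikhin modules), studies the triple tensor product $W=\bigotimes_{i\in\partial I}V_q(Y_{i,r_i,\lambda_i})$, classifies all dominant $\ell$-weights of $W$ of weight $\ge\nu$ (Lemmas \ref{l:tripletpinc} and \ref{l:tripletpcoh}), and closes with a dimension count against the modified Kostant partition function; your ``matching lower bounds'' phrasing obscures the fact that the strict inequality comes from the leftover dimension at $\nu$ being $1$ in the coherent case and $0$ in the incoherent case, which is exactly what must be computed and cannot be assumed.

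Second, your treatment of the weights for which $\nu$ cannot serve as a witness is insufficient. The relevant weights are those $\mu$ with $m_\mu(V_q(\bs{\varpi}))>0$ and $\mu\not<\nu$ (equivalently, in the paper's notation, $k_\lambda\notin[i_*,j_k]$), not ``$\mu>\nu$'', and for these one must prove $m_\mu(V_q(\bs{\varpi}))\le m_\mu(V_q(\bs{\omega}))$ outright; connectedness of $J_\mu$ from Lemma \ref{l:many0} only reduces the problem to $U_q(\tlie g_{J_\mu})$, it does not decide it. The paper's resolution is to show that $V_q(\bs{\pi}_{J_\mu})\cong V_q(\bs{\pi}^{\{m\}}_{J_\mu})\otimes V_q(\bs{\pi}^{[i_*,l]}_{J_\mu})$ for both $\bs{\pi}\in\{\bs{\omega},\bs{\varpi}\}$, by verifying that the obstructions of Theorem \ref{t:tpa} and Corollary \ref{c:tpd} all fail; since the two tensor products are then isomorphic as $U_q(\lie g)$-modules, the multiplicities coincide. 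This verification is where the hypotheses (i)--(iii) genuinely enter (not only through Lemma \ref{l:remroots}, but also through the limited generality in which Theorem \ref{t:tpa} and Corollary \ref{c:tpd} are available), and it constitutes the bulk of Section \ref{ss:cohnotmin}; your proposal does not supply it.
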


\begin{rem}
	This completes the classification of minimal affinizations for $\lie g$ of type $D_4$. Namely, it was proved in \cite{cp:minirr} that, if $V_q(\bs{\omega})$ is an irregular minimal affinization, then it must belong to 3 explicitly described families of preminimal affinizations (each family contains more than one equivalence class of affinizations). One of such families consist of preminimal affinizatons of order $1$ (family (c) in the notation of \cite{cp:minirr}). The elements belonging to this class are shown to be minimal affinizations in  \cite{cp:minirr}. Moreover, they are not comparable to any element of the other two families which are formed by the preminimal affinizations of order $2$: the coherent ones and the incoherent ones (families (a) and (b) in the notation of \cite{cp:minirr}). It was conjectured in \cite{cp:minirr} that a given coherent preminimal affinization was equivalent to its incoherent counterpart. This would imply that all members of all three families were minimal affinizations, thus completing the classification. Theorem \ref{t:cohnotmin} shows that the coherent preminimal affinizations listed in  \cite{cp:minirr} are actually not minimal affinizations. In fact, the proof of  Theorem \ref{t:cohnotmin} will show that a given coherent preminimal affinization is strictly larger than its incoherent counterpart in the partial order of affinizations. Thus, all elements of the incoherent family are minimal affinizations. The classification of irregular minimal affinizations for type $D_4$ can then be summarized as: the 3 equivalent classes of incoherent preminimal affinizatios of order $2$ together with the preminimal affinizations of order $1$ listed in family (c) of the main Theorem of \cite{cp:minirr}.  See also Remark \ref{r:cpconj} for comments related to the structure of these affinizations including an explanation of the erroneous announcement about the correctness of the conjecture from \cite{cp:minirr} made in \cite{mou}.\endd
\end{rem}

The proof of Theorem \ref{t:cohnotmin}, given in Section \ref{ss:cohnotmin}, relies on tensor product results from \cite{cha:braid,hmp:tpa}, which will be reviewd in Section \ref{ss:tp}, and on the computation of certain outer multiplicities for preminimal affinizations satisfying
\begin{equation}\label{e:moIlambda2}
{\rm mo}(\bs{\omega}_{I^\lambda}) =2 \quad\text{and}\quad \lambda(h_{i_*})=0,
\end{equation}
where $\lambda=\wt(\bs{\omega})$, which we now explain. Thus, let $\bs{\omega}\in\mathcal P^+$ be preminimal satisfying \eqref{e:moIlambda2}, let $k\in\partial I$ be such that
\begin{equation}\label{e:Llambnotkmin}
\bs{\omega}_{I^\lambda} \quad\text{is not $k_\lambda$-minimal},
\end{equation}
and set $$V=V_q(\bs{\omega}).$$ One easily checks that $\supp(\lambda)$ intersects both connected components of $I_k\setminus\{i_*\}$. Notice however that we are allowing the possibility
\begin{equation*}
  \supp(\lambda)\cap [k,i_*]=\emptyset,
\end{equation*}
in which case $k_\lambda=k$. Recall \eqref{e:defvtheta} and set
\begin{equation}
\nu=\lambda-\vartheta_{I^\lambda} \qquad\text{and}\qquad  \nu_{l}=\lambda-\vartheta_{I_l^\lambda} \quad\text{for}\quad l\in\partial I.
\end{equation}
The following proposition will be crucial in the proof of Theorem \ref{t:cohnotmin}.

\begin{prop}\label{p:outer}
	In the above notation, we have:
	\begin{enumerate}[(a)]
		\item Let $l\in\partial I$. Then, $m_{\nu_{l}}(V)=\delta_{l,k}$ and $m_{\lambda-\alpha_{l_\lambda}}(V)=0$ if $l_\lambda\in\supp(\lambda)$.
		\item If $\mu\in P^+$ satisfies  $\nu<\mu<\lambda$ and $m_\mu(V)>0$, then $\mu=\nu_k$,
		\item\label{p:outerc} If $\bs{\omega}_{I^\lambda}$ is coherent and $k_\lambda\in\supp(\lambda)$, $m_\nu(V)=1$,
		\item\label{p:outeri} If $\bs{\omega}_{I^\lambda}$ is incoherent and $k_\lambda\in\supp(\lambda)$, $m_\nu(V)=0$.\endd
	\end{enumerate}
\end{prop}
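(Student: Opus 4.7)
My plan is to combine upper bounds on outer multiplicities coming from graded limits with matching lower bounds (or exact vanishing) coming from qcharacter computations for tensor products of Kirillov--Reshetikhin modules associated to the extremal nodes of $I^\lambda$.

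First I would pass to the classical limit $\overline V$ of $V=V_q(\bs\omega)$ as in Section \ref{ss:classlim}. By Proposition \ref{p:genbycurrent}, $\overline V$ is a highest-weight $\lie g[t]$-module, hence a quotient of the graded local Weyl module $W(\lambda)$. The hypothesis ${\rm mo}(\bs\omega_{I^\lambda})=2$ together with the descriptions of the graded limits of coherent preminimal affinizations in \cite{mou,naoi:D} provides additional relations satisfied in $\overline V$, so that $\overline V$ is a quotient of an explicit ``coherent Weyl module'' whose character is controlled by Proposition \ref{p:parts} and the formula \eqref{e:dimVnuall}: for every $\mu$ in $[\nu,\lambda]$, the dimension of its $\mu$-weight space is given by a modified Kostant partition function. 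Since $\dim V_\mu=\dim\overline V_\mu$ and $\dim V_\mu=\sum_{\mu'\ge\mu} m_{\mu'}(V)\dim V(\mu')_\mu$, descending induction on $\mu$ converts these dimension bounds into explicit upper bounds on $m_\mu(V)$ for all $\mu\ge\nu$. Comparing these upper bounds weight-by-weight with the characters of the $V(\mu')$ for $\mu'\in\{\lambda,\nu_j:j\in\partial I\}$ forces $m_{\nu_l}(V)\le\delta_{l,k}$ and $m_{\lambda-\alpha_{l_\lambda}}(V)=0$ when $l_\lambda\in\supp(\lambda)$, and shows that $\nu_k$ is the only dominant weight strictly in $(\nu,\lambda)$ admitting a nonzero multiplicity; this settles part (b) and the upper-bound half of part (a).

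For the matching lower bound $m_{\nu_k}(V)\ge 1$, as well as parts (c) and (d), I would work inside the $U_q(\tlie g_{I^\lambda})$-submodule of $V$ generated by the highest-$\ell$-weight vector and apply the qcharacter computations of Section \ref{ss:KR} together with the tensor-product analyses of Sections \ref{ss:tpcoh} and \ref{ss:tpinc} to the relevant tensor products of Kirillov--Reshetikhin modules attached to the three extremal nodes of $I^\lambda$. In each case, one locates every $\ell$-weight of classical weight $\nu$ or $\nu_k$ appearing in $V$. For $\nu_k$, using $l$-minimality of $\bs\omega$ for $l\in\partial I\setminus\{k\}$ together with Lemma \ref{l:many0} and Theorem \ref{teo tipo A}, the machinery produces a dominant $\ell$-weight of classical weight $\nu_k$, giving $m_{\nu_k}(V)\ge 1$ and completing (a). In the coherent case of (c), the same analysis yields a dominant $\ell$-weight of classical weight $\nu$ together with a highest-$\ell$-weight vector realizing it in $V$; combined with the upper bound $m_\nu(V)\le 1$, this gives $m_\nu(V)=1$. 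In the incoherent case of (d), the analogous bookkeeping shows instead that every $\ell$-weight of classical weight $\nu$ occurring in $V$ lies strictly below some higher dominant $\ell$-weight already accounted for, so no $U_q(\lie g)$-singular vector of weight $\nu$ survives and $m_\nu(V)=0$.

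The main obstacle will be the qcharacter bookkeeping in Sections \ref{ss:tpcoh} and \ref{ss:tpinc}. Controlling which monomials of classical weight $\nu$ actually occur in the relevant tensor products, and carefully distinguishing the coherent from the incoherent situation, rests on the same delicate combinatorial analysis that underlies Lemma \ref{l:remroots}; this is precisely the point where the extra type-$E$ hypotheses of Theorem \ref{t:cohnotmin} become relevant, and coordinating this analysis with the upper bounds coming from the graded limit is where the bulk of the technical work will sit.
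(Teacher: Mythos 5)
Your plan matches the paper's proof essentially step for step: the vanishing statements in (a) come from restriction to type-$A$ diagram subalgebras, the lower bound $m_{\nu_k}(V)\ge 1$ from the type-$A$ non-minimality criterion (Proposition \ref{p:Anotmintheta}), the upper bound $m_\nu(V)\le 1$ from a PBW analysis of the graded-limit quotient $M_k(\lambda)$, and (c)/(d) from classifying the dominant $\ell$-weights of classical weight $\ge\nu$ in the tensor product of boundary Kirillov--Reshetikhin modules and then counting dimensions of $\nu$-weight spaces to pin down which candidates actually occur as factors. One small correction: the extra type-$E$ hypotheses of Theorem \ref{t:cohnotmin} enter only through Lemmas \ref{l:remroots} and \ref{l:<nuk}, neither of which is used in the proof of Proposition \ref{p:outer} --- the proposition holds, and is proved, in the stated greater generality, so the residual difficulty you anticipate is located in the wrong place.
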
 

Since $\bs{\omega}_{I^\lambda}$ is $l$-minimal for $l\ne k$ and $I_l$ is of type $A$, the equality $m_{\nu_{l}}(V)=0$ is immediate from the well-known Lemma \ref{lema multi subdiagrama} below which also implies $m_{\lambda-\alpha_{l_\lambda}}(V)=0$ for all $l\in\partial I$ such that $l_\lambda\in\supp(\lambda)$. The remaining statement of part (a) (the equality $m_{\nu_{k}}(V)=1$) will be proved in Section \ref{ss:KR}.  

Using part (a), part (b) is then easily proved as follows. The condition $\mu>\nu$, together with \eqref{e:Jmu},  implies that $\mu=\lambda-\vartheta_J$ for some connected subdiagram $J$ properly contained in $I^\lambda$. One easily checks that, for such $J$, we have 
\begin{equation}
  \lambda-\vartheta_J \in P^+ \qquad\Leftrightarrow\qquad J=[l_\lambda,m_\lambda] \quad\text{with}\quad l,m\in\partial I, \ l_\lambda,m_\lambda\in\supp(\lambda).
\end{equation}
Hence, $\mu=\nu_l$ for some $l\in\partial I$ or $\mu=\lambda-\alpha_{l_\lambda}$ with $l_\lambda\in\supp(\lambda)$ and part (a) implies $\mu=\nu_k$.

Note that parts (a) and (b) of Proposition \ref{p:outer} imply that
\begin{equation*}
m_\nu(V) = \dim(V_\nu) - \dim(V_q(\lambda)_\nu) - \dim(V_q(\nu_k)_\nu).
\end{equation*}	
Hence, proving parts (c) and (d) is equivalent to proving that
\begin{equation}\label{e:4n-2}
\dim(V_\nu) = \dim(V_q(\lambda)_\nu) + \dim(V_q(\nu_k)_\nu) + \xi
\end{equation}	
where $\xi=1$ for part (c) and $\xi=0$ for (d). The proof of \eqref{e:4n-2} will be given in Sections \ref{ss:tpinc} and \ref{ss:tpcoh} using qcharacter theory.

Part of the hypotheses on Theorem \ref{t:cohnotmin} is explained by the following lemma. The remaining hypotheses come from the generality that Theorem \ref{t:tpa}  below is presently proved. 

\begin{lem}\label{l:<nuk}
	Let $V$ be as in Proposition \ref{p:outer} and assume either one of the following hypothesis:
	\begin{enumerate}[(i)]
		\item $\lie g$ is of type $D$,
		\item $I_k^\lambda$ is of type $A_3$,
		\item $\lie g$ is of type $E_6$ and $\supp(\lambda)=\partial I$.
	\end{enumerate}	
	If $\mu\in P^+$, satisfies  $m_\mu(V)>0$ and $\mu\ne \lambda$, then $\mu\le\nu_k$.\endd
\end{lem}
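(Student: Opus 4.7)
Fix $\mu\in P^+$ with $m_\mu(V)>0$ and $\mu\ne\lambda$, and set $J_\mu:=\rupp(\lambda-\mu)$. The goal is to show $J_\mu\supseteq I_k^\lambda$, which is equivalent to $\lambda-\mu\ge\vartheta_{I_k^\lambda}$, i.e., $\mu\le\nu_k$.

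First I would observe from \eqref{e:Jmu} that $J_\mu$ is a connected subdiagram of $I$. For each $l\in\partial I\setminus\{k\}$, the $l$-minimality of $\bs{\omega}$ combined with Theorem \ref{teo tipo A} gives $V_q(\bs{\omega}_{I_l})\cong V_q(\lambda_{I_l})$ as a $U_q(\lie g_{I_l})$-module, and Lemma \ref{lema multi subdiagrama} (to be reviewed in Section \ref{ss:diag}) then precludes $J_\mu\subseteq I_l$. Since the complements $I\setminus I_l$ for $l\in\partial I\setminus\{k\}$ sit on distinct branches of $I$ emanating from $i_*$, the connectedness of $J_\mu$ forces $i_*\in J_\mu$ together with nonempty intersection of $J_\mu$ with the branch from $i_*$ toward each $l\in\partial I\setminus\{k\}$.

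The second step promotes these intersections into the full containment $J_\mu\supseteq I_k^\lambda$. On each branch from $i_*$ along which the relevant portion of $I_k^\lambda$ extends beyond a direct neighbor of $i_*$, I would combine three ingredients: (a) further applications of Lemma \ref{lema multi subdiagrama} to type-$A$ subdiagrams $J\subseteq I$ with $J\setminus\{i_*\}$ connected (for which preminimality of $\bs{\omega}$ makes $V_q(\bs{\omega}_J)$ an irreducible minimal $A$-type affinization); (b) the dominance of $\mu$, which at each $j\notin\supp(\lambda)$ yields the convexity constraint $2\het_j(\lambda-\mu)\le\het_{j-1}(\lambda-\mu)+\het_{j+1}(\lambda-\mu)$ along each branch; and (c) tensor product analyses of the kind furnished by Theorem \ref{t:tpa} (tensor products of type-$A$ minimal affinizations with extremal Kirillov--Reshetikhin modules). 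Together, these ingredients rule out configurations in which $J_\mu$ stops short of some $l_\lambda$, so that $J_\mu\supseteq[i_*,l_\lambda]$ on each branch and hence $J_\mu\supseteq I^\lambda\cap I_k=I_k^\lambda$.

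The hypotheses (i)--(iii) control how delicate ingredient (c) needs to be. In case (i), $\lie g$ of type $D$, two of the three branches from $i_*$ have length one, so for those Step 1 already yields $l_\lambda=l\in J_\mu$ and only the long branch requires the full convexity/subdiagram argument. In case (ii), $I_k^\lambda\cong A_3$ is short enough that the inclusion reduces essentially to Step 1 and elementary connectedness. In case (iii), $\lie g=E_6$ with $\supp(\lambda)=\partial I$ forces $l_\lambda=l$ for every $l\in\partial I$ and $I^\lambda=I$, again a short-branch configuration. The principal obstacle to extending Lemma \ref{l:<nuk} to more general type $E$ is ingredient (c): when branches are longer and $\supp(\lambda)$ is arbitrary, the presently available tensor product criterion (Theorem \ref{t:tpa}) is not sharp enough to eliminate all dominant $\mu$ with a "short" $J_\mu$, and a finer input—of the kind indicated in the last paragraph of Section \ref{ss:cohinc}—would be required.
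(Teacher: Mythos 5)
Your Step 1 is correct, and in case (ii) it already completes the proof exactly as the paper does: by Lemma \ref{l:many0}, the element $j$ of part (b) satisfies $j\ne i_*$ for each $l\in\partial I_k$, so $J_\mu$ contains the neighbour of $i_*$ on both branches of $I_k$, which is all of $I_k^\lambda$ when that diagram is of type $A_3$. The gap is Step 2, which is where the whole content of cases (i) and (iii) lives and which you have not actually carried out: you assert that Lemma \ref{lema multi subdiagrama}, dominance of $\mu$, and Theorem \ref{t:tpa} together rule out configurations where $J_\mu$ stops short of some $l_\lambda$, but none of these ingredients does this. Dominance does not: in type $D_5$ with $k=5$, $\supp(\lambda)=\{1,4,5\}$ (so $l_\lambda=1$ and $I_k^\lambda=\{1,2,3,4\}$) and $\lambda(h_4)\ge 2$, the weight $\mu=\lambda-(\alpha_2+2\alpha_3+2\alpha_4+\alpha_5)$ is dominant and satisfies every conclusion of your Step 1, yet $\mu\not\le\nu_k$; some representation-theoretic input beyond convexity is needed to show $m_\mu(V)=0$ for such $\mu$. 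Lemma \ref{lema multi subdiagrama} does not supply it, because once $J_\mu$ contains $i_*$ and meets all three branches it is no longer of type $A$, so the type-$A$ minimal affinization theory does not apply to $V_q(\bs{\omega}_{J_\mu})$. Theorem \ref{t:tpa} is an irreducibility criterion for tensor products; in the paper it enters only in Section \ref{ss:cohnotmin}, to factor $V_q(\bs{\pi}_{J_\mu})$ as a tensor product, not to bound outer multiplicities of a single simple module. Finally, your claim that case (iii) is a ``short-branch configuration'' is false: two branches of $E_6$ have length $2$, and $\supp(\lambda)=\partial I$ places $l_\lambda=l$ at distance $2$ from $i_*$, so Step 1 only reaches the neighbour of $i_*$ there.

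The paper's actual argument for (i) and (iii) is of a different nature: it passes to the graded limit. By Lemma \ref{l:M>>L}, $m_\mu(V)\le m_\mu(M_k(\lambda))$, and one works in the classical $\lie g[t]$-module $M_k(\lambda)$. The defining relations \eqref{e:defMk}, combined with the commutator identity $x_{\alpha,1}^-v=[x_\beta^-,x_{\gamma,1}^-]v=0$ whenever $\alpha=\beta+\gamma$ with $\rupp(\beta)\cap\supp(\lambda)=\emptyset$ and $x_{\gamma,1}^-v=0$, yield Lemma \ref{l:remroots}: $x_{\alpha,1}^-v=0$ unless $l_\lambda\in\rupp(\alpha)$ for every $l\in\partial I_k$, i.e.\ unless $I_k^\lambda\subseteq\rupp(\alpha)$. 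A PBW spanning argument then forces every dominant $\mu<\lambda$ with $m_\mu(M_k(\lambda))>0$ to satisfy $\mu\le\lambda-\vartheta_{I_k^\lambda}=\nu_k$. The hypotheses (i)--(iii) are precisely what makes the required root decompositions $\alpha=\beta+\gamma$ available (this is where the case analysis lives); they are not, for this lemma, a limitation coming from Theorem \ref{t:tpa} as your closing paragraph suggests. To repair your plan you would need to replace ingredient (c) by this classical-limit mechanism, or by an equally strong statement about the $\ell$-weights of $V$.
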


The conclusion of this lemma is false outside these hypotheses. However, although we use it in a strong manner in the proof of Theorem \ref{t:cohnotmin}, these hypotheses may not be necessary conditions for the validity of Conjecture \ref{cj:cohnotmin}. In fact, we believe the approach we use here can be carried out in broader generality and the first step is to replace this lemma by a characterization of the maximal elements of the set $\{\mu\in P^+:\mu<\lambda, m_\mu(V)>0\}$. We will address this in a future work. 
In the case that hypothesis (ii) holds, Lemma \ref{l:<nuk} is a simple consequence of Lemma \ref{l:many0}. More generally, it will be proved as a consequence of Lemma \ref{l:remroots}.

\subsection{Graded Limits}\label{ss:grlim}
Let $\lambda\in P^+$, and suppose $\bs{\omega}\in\mathcal P^+_q$ is of the form 
\begin{equation}
\bs\omega=\prod_{i\in I} \bs\omega_{i,a_i,\lambda(h_i)} \quad\text{for some}\quad a_i\in\mathbb F^\times.
\end{equation}
Suppose further that $a_i/a_j\in q^{\mathbb Z}$ for all $i,j\in I$ (which is the case if $V_q(\bs{\omega})$ is a minimal affinization). In that case,   there exists a $\lie g[t]$-module $L(\bs{\omega})$, referred to as the graded limit of $V_q(\bs{\omega})$, satisfying
\begin{equation}\label{e:grlimch}
\ch(L(\bs{\omega})) = \ch(V_q(\bs{\omega})).
\end{equation}
The construction of $L(\bs{\omega})$ and the related literature will be revised in Section \ref{ss:classlim}.

The graded local Weyl module of highest weight $\lambda$ is the $\lie g[t]$-module $W(\lambda)$ generated by a vector $w$ satisfying the defining relations
\begin{equation*}
\lie n^+[t]w = \lie h[t]_+ w = 0, \quad hw=\lambda(h)w, \quad (x_i^-)^{\lambda(h_i)+1}w=0
\end{equation*}
for all $h\in\lie h$ and $i\in I$. It is known that $W(\lambda)$ is finite-dimensional and any finite-dimensional graded $\lie g[t]$-module generated by a highest-weight vector of weight $\lambda$ is a quotient of $W(\lambda)$ (see \cite{cfk:cat} and references therein). 

Assume $\lie g$ is of type $D$ or $E$ and, for $k\in\partial I$, let $M_k(\lambda)$ be the quotient  of $W(\lambda)$ by the submodule generated by
\begin{equation}\label{e:defMk}
x^-_{\vartheta_J,1}w \quad\text{with}\quad J\subseteq I_i,\ \ \forall \ i\in \partial I_k.
\end{equation} 
The following lemma will be proved in Section \ref{ss:classlim}.

\begin{lem}\label{l:M>>L}
	Let $\bs{\omega}\in\mathcal P_q^+$ and suppose $k\in\partial I$ is such that $V_q(\bs{\omega}_{I_i})$ is a minimal affinization for $i\in \partial I_k$. Then, $L(\bs{\omega})$ is a quotient of $M_k(\lambda)$. In particular, 
	\begin{equation*}\label{e:uppbound}
	m_\mu(V_q(\bs{\omega}))\le m_\mu(M_k(\lambda)) \quad\text{for all}\quad \mu\in P^+.
	\end{equation*}\endd
\end{lem}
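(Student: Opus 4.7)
The plan is to verify that \(L(\bs{\omega})\) enjoys all the defining relations of \(M_k(\lambda)\), after which the multiplicity bound is immediate from the character identity \eqref{e:grlimch}.

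First, by the construction of graded limits to be reviewed in Section \ref{ss:classlim}, the module \(L(\bs{\omega})\) is generated by a vector \(v\) of weight \(\lambda\) satisfying \(\lie n^+[t]v = \lie h[t]_+ v = 0\), \(hv=\lambda(h)v\) for \(h\in\lie h\), and \((x_i^-)^{\lambda(h_i)+1} v = 0\) for all \(i \in I\). These are exactly the defining relations of \(W(\lambda)\), so there is a canonical surjection \(W(\lambda) \twoheadrightarrow L(\bs{\omega})\) sending the generator of \(W(\lambda)\) to \(v\). The next step is to show this surjection factors through \(M_k(\lambda)\); equivalently, that \(x_{\vartheta_J, 1}^- v = 0\) in \(L(\bs{\omega})\) for every \(J \subseteq I_i\) with \(i \in \partial I_k\).

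For this, I would fix \(i \in \partial I_k\), so that \(I_i\) is a connected subdiagram of type \(A\) and \(V_q(\bs{\omega}_{I_i})\) is a minimal affinization by hypothesis. Since the graded limit procedure is compatible with restriction to the subalgebra \(U_q(\tlie g_{I_i})\), the \(\lie g_{I_i}[t]\)-submodule of \(L(\bs{\omega})\) generated by \(v\) is a quotient of \(L(\bs{\omega}_{I_i})\). The graded limits of type \(A\) minimal affinizations are known (from work referenced in Section \ref{ss:classlim}, e.g. Chari--Moura and Naoi) to be truncated Weyl modules whose defining relations include \(x_{\vartheta_J, 1}^- v = 0\) for every connected subdiagram \(J\) of the underlying type \(A\) diagram. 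Hence these relations hold in \(L(\bs{\omega})\) for all connected \(J \subseteq I_i\); disconnected \(J\) contribute nothing since then \(x^-_{\vartheta_J}=0\) in \(\lie g\). Letting \(i\) range over \(\partial I_k\) exhausts the generators of the submodule defining \(M_k(\lambda)\), producing the desired factorization \(M_k(\lambda) \twoheadrightarrow L(\bs{\omega})\).

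For the multiplicity assertion I would use that \(L(\bs{\omega})\) and \(V_q(\bs{\omega})\) are each completely reducible as \(\lie g\)-modules with equal characters by \eqref{e:grlimch}, whence \(m_\mu(L(\bs{\omega})) = m_\mu(V_q(\bs{\omega}))\); the surjection \(M_k(\lambda) \twoheadrightarrow L(\bs{\omega})\) of \(\lie g[t]\)-modules (hence of \(\lie g\)-modules) then gives \(m_\mu(V_q(\bs{\omega})) \le m_\mu(M_k(\lambda))\). The main non-routine ingredient is the explicit description of type \(A\) graded limits invoked above; the most delicate point is to justify that restriction to \(\lie g_{I_i}[t]\) identifies the submodule \(U(\lie g_{I_i}[t])v\) of \(L(\bs{\omega})\) as a quotient of \(L(\bs{\omega}_{I_i})\). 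This compatibility should follow directly from the classical limit/\(\tau_a\)-twist procedure defining \(L(\bs{\omega})\) together with the fact that the analogous quantum submodule \(U_q(\tlie g_{I_i})v_{\bs{\omega}} \subseteq V_q(\bs{\omega})\) is a highest-\(\ell\)-weight module of \(\ell\)-weight \(\bs{\omega}_{I_i}\), but it is the point where I would expect care to be needed in the write-up.
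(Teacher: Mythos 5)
Your proposal is correct and follows essentially the same route as the paper: the paper also reduces to the type $A$ subdiagrams $I_i$ and invokes the known structure of the classical limit of a type $A$ minimal affinization (via \cite[Proposition 3.13]{mou}, giving $x_{\alpha,r}^-\bar v = a^r x_\alpha^-\bar v$ for $\alpha\in R_J^+$), which after the $\tau_a$-twist yields exactly the relations $x_{\vartheta_J,1}^-v=0$ defining $M_k(\lambda)$, and then uses $\ch(L(\bs{\omega}))=\ch(V_q(\bs{\omega}))$ for the multiplicity bound. The compatibility point you flag is handled in the paper by that citation, together with the standard fact that $U_q(\tlie g_{I_i})v_{\bs{\omega}}\cong V_q(\bs{\omega}_{I_i})$ for irreducible $V_q(\bs{\omega})$.
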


\begin{con}\label{cj:grlim}
	Let $\bs{\omega}\in\mathcal P_q^+$ be preminimal of order $2$ and let $k\in\partial I$ be such that $\bs{\omega}$ is not $k$-minimal. If $\bs{\omega}$ is coherent and $\supp(\lambda)\cap[k,i_*]\ne\emptyset$, then $L(\bs{\omega})\cong M_k(\lambda)$.\endd
\end{con}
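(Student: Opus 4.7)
The plan is to promote Conjecture \ref{cj:grlim} to a character equality. By Lemma \ref{l:M>>L} there is a surjective $\lie{g}[t]$-module map $\pi: M_k(\lambda) \twoheadrightarrow L(\bs{\omega})$, and \eqref{e:grlimch} gives $\ch L(\bs{\omega}) = \ch V_q(\bs{\omega})$. Since $\pi$ is surjective, it suffices to establish the opposite coefficient-wise inequality
$$\ch M_k(\lambda) \leq \ch V_q(\bs{\omega}) \qquad \text{in } \mathbb{Z}_{\geq 0}[P].$$
The right-hand side is pinned down by a repeated application of Proposition \ref{p:outer}. The hypothesis $\supp(\lambda)\cap[k,i_*]\ne\emptyset$ combined with the only unresolved case $i_*\notin\supp(\lambda)$ (after Theorem \ref{t:regmin}(b)) forces $k_\lambda\in\supp(\lambda)$, so part (c) of Proposition \ref{p:outer} yields $m_\nu(V_q(\bs{\omega}))=1$; applying the proposition to each $\lie{g}_J$-submodule with $J\subseteq I$ a $D_4$-subdiagram meeting $\supp(\lambda)$ at $\partial J$ determines the remaining multiplicities $m_\mu(V_q(\bs{\omega}))$ inductively.

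The substantive step is the upper bound on $\ch M_k(\lambda)$. By Proposition \ref{p:genbycurrent} we have $M_k(\lambda)=U(\lie{n}^-[t])w$, so a PBW-type filtration together with the defining relations \eqref{e:defMk} produces a spanning set whose cardinality must match $\dim V_q(\bs{\omega})$. The natural route, following \cite{naoi:grlim,naoi:D}, is to identify $M_k(\lambda)$ with a fusion product of Kirillov--Reshetikhin Demazure modules
$$M_k(\lambda) \;\cong\; D_{i_1}^{\ast\lambda(h_{i_1})} \ast D_{i_2}^{\ast\lambda(h_{i_2})} \ast \cdots \ast D_{i_r}^{\ast\lambda(h_{i_r})},$$
where $\supp(\lambda)=\{i_1,\dots,i_r\}$, each $D_{i_j}$ is the level-one Demazure module attached to $\omega_{i_j}$, and the evaluation points are chosen to realize the coherent pattern prescribed by $k$. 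Once such an identification is secured, the Demazure character formula computes $\ch M_k(\lambda)$ and an elementary matching with the previous paragraph closes the argument. In the regular type-$D$ case $i_*\in\supp(\lambda)$ this is precisely the content of \cite{naoi:D}, and as already remarked in the introduction, Naoi's generators and relations are anchored at the nodes of $\supp(\lambda)$, so his argument appears to extend formally to our setting.

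The main obstacle is exactly this extension to $i_*\notin\supp(\lambda)$. Naoi's identification with a Demazure submodule of a level-one integrable module uses $\lambda(h_{i_*})>0$ to anchor an extremal weight at $i_*$; when $\lambda(h_{i_*})=0$ the relevant Demazure module collapses at that node, and one must re-anchor the argument on the two-branched structure of $I^\lambda\setminus\{i_*\}$ and on the coherent flow of evaluation parameters across $i_*$ (which is precisely what makes the coherent case special). A secondary obstacle is the passage from type $D$ to type $E$, where the root-system bookkeeping in \cite{naoi:D} must be redone case-by-case and the generalized Kostant-partition computations analogous to Proposition \ref{p:parts} become considerably more intricate. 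I would attempt a joint induction on $\het(\lambda)$ and on $d(k,i_*)$, using Lemma \ref{l:many0} to confine the non-trivial multiplicities to a connected sub-support and reducing, at each step, to a $D_4$-configuration where Proposition \ref{p:outer}(c) applies directly to supply the seed case.
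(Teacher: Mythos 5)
This statement is labeled as a \emph{Conjecture} in the paper (Conjecture \ref{cj:grlim}), and the paper does not prove it: the authors explicitly state that it is known only for type $D_4$ (from \cite{mou}) and for type $D$ with $\lambda$ regular (from \cite{naoi:grlim,naoi:D}), and elsewhere (Remark \ref{r:cpconj}) they deliberately decline to pursue the graded-limit route in full, using qcharacters instead to extract the few multiplicities they actually need. Your proposal is therefore not being measured against a proof in the paper; it has to stand on its own, and as written it does not. It is a research plan with two load-bearing steps that are asserted rather than established. First, the claim that Proposition \ref{p:outer}, applied to $D_4$-subdiagrams, ``determines the remaining multiplicities $m_\mu(V_q(\bs{\omega}))$ inductively'' is an overstatement: that proposition only controls the multiplicities at $\nu_l$, $\nu$, $\lambda-\alpha_{l_\lambda}$ and the weights strictly between $\nu$ and $\lambda$, i.e.\ the top one or two layers of the weight poset. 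It gives no handle on $m_\mu$ for $\mu$ deeper down (e.g.\ $\mu=\lambda-2\vartheta_{I^\lambda_k}$ or weights with $\het_i(\lambda-\mu)\ge 2$), which is exactly where the character comparison $\ch M_k(\lambda)\le\ch V_q(\bs{\omega})$ would have to be fought.

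Second, the proposed identification of $M_k(\lambda)$ with a fusion product of KR Demazure modules is precisely the open point. You correctly diagnose that Naoi's argument anchors a Demazure extremal weight at $i_*$ using $\lambda(h_{i_*})>0$, and that this anchor is absent in the irregular case; but you then leave the re-anchoring as a hope rather than carrying it out. Without that identification there is no Demazure character formula to invoke, and the upper bound on $\ch M_k(\lambda)$ — the ``substantive step'' by your own description — is never obtained. The paper's own machinery (the spanning-set estimates of Section \ref{ss:upper} via $U(\lie n^-)U(S_{>1})U(S_1^\lambda)v$) only yields inequalities for the top weight layers, consistent with the fact that the full character equality remains conjectural. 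In short: the surjection $M_k(\lambda)\twoheadrightarrow L(\bs{\omega})$ and the reduction to a character inequality are fine, but everything after that is an outline of what would need to be proved, not a proof.
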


This is a partial rephrasing of a conjecture from \cite{mou} which was proved still in \cite{mou} for $\lie g$ of type $D_4$ and in \cite{naoi:grlim,naoi:D} in the case that $\lie g$ is of type $D$,  $\lambda$ is regular, and $V_q(\bs{\omega})$ is a minimal affinization. The proof for type $D_4$ given in \cite{mou} depends only on the hypothesis that $\bs{\omega}$ is coherent, regardless if $\lambda$ is irregular or if $V_q(\bs{\omega})$ is a minimal affinization and, as far as we can tell, the same should be true for that given in \cite{naoi:grlim,naoi:D}. In particular, these proofs also provide with formulas for computing the graded character of $L(\bs{\omega})$. Parts (a) and (c) of Proposition \ref{p:outer} as well as Lemma \ref{l:<nuk} in the case $\lie g$ is of type $D$ and $\bs{\omega}$ is coherent can then be deduced from such computations. However, one needs much less information about the graded character to prove these statements.   Namely, after Lemma \ref{l:M>>L}, it suffices to prove 
\begin{equation}
m_\mu(M_k(\lambda)) \ne 0 \quad\Rightarrow\quad \mu\le \nu_k,
\end{equation}
\begin{equation}
 \mu\in\{\nu_k,\nu\},\ k_\lambda\in\supp(\lambda) \quad\Rightarrow\quad m_\mu(M_k(\lambda)) = 1 \quad\text{and}\quad m_\mu(L(\bs{\omega}))\ne 0.
\end{equation}
We will actually prove the following slightly stronger result.

For a graded vector space $V$, let $V[s]$ be the $s$-th graded piece. If $V$ is a graded $\lie g[t]$-module, then $V[s]$ is a $\lie g$-submodule of $V$ for every $s$ and we set
\begin{equation*}
m_\mu^s(V) = m_\mu(V[s]), \quad \mu\in P^+.
\end{equation*}

\begin{prop}\label{p:outerM}
	Let $\bs{\omega}\in\mathcal P_q^+$ be preminimal of order $2$ and let $k\in\partial I$ be such that $\bs{\omega}$ is not $k$-minimal. We have:
	\begin{enumerate}[(a)]
		\item $m_{\nu_{k}}^s(M_k(\lambda))=\delta_{s,1}$ and, if $k_\lambda\in\supp(\lambda)$, $m_\nu^s(M_k(\lambda))=\delta_{s,1}$;
		\item $m_{\nu_k}(L(\bs{\omega}))\ne 0$ and, if $\bs{\omega}$ is coherent and $k_\lambda\in\supp(\lambda)$, then $m_\nu(L(\bs{\omega}))\ne 0$;
		\item under the hypotheses of Lemma \ref{l:<nuk}, if $\mu\in P^+$ satisfies  $\mu<\lambda$ and $m_\mu^s(M_k(\lambda))>0$ for some $s\in\mathbb Z$, then $\mu\le\nu_k$.\endd
	\end{enumerate}
\end{prop}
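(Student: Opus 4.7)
The plan is to exploit the chain of surjections $W(\lambda) \twoheadrightarrow M_k(\lambda) \twoheadrightarrow L(\bs{\omega})$ (the second being Lemma~\ref{l:M>>L}), so that upper bounds on multiplicities in $M_k(\lambda)$ transfer to $L(\bs{\omega})$, and the character equality~\eqref{e:grlimch} then transfers them to $V_q(\bs{\omega})$. I would prove part (c) first, then the two halves (upper and lower bound) of part (a), and finally part (b) using~\eqref{e:grlimch} together with a diagram-subalgebra reduction.

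For part (c): by Proposition~\ref{p:genbycurrent}, $M_k(\lambda) = U(\lie n^-[t])w$ for a highest-weight generator $w$. Since $M_k(\lambda)$ is a quotient of the Weyl module, Lemma~\ref{l:many0} (whose proof only uses preminimality of the defining polynomial and applies verbatim to the abstract graded module $M_k(\lambda)$) forces $\rupp(\lambda-\mu)$ to be connected for any $\mu < \lambda$ with $m_\mu^s(M_k(\lambda)) > 0$. The defining relations $x^-_{\vartheta_J,1}w = 0$ for $J \subseteq I_i$, $i \in \partial I_k$, combined with the commutator identity~\eqref{e:nonormrv} used to push degree-one lowering operators past the degree-zero ones in a PBW normal form, further restrict $\rupp(\lambda-\mu)\subseteq I^\lambda$. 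Under the hypotheses of Lemma~\ref{l:<nuk} --- invoked via Lemma~\ref{l:remroots} in the type-$E$ cases --- an inspection of the allowable connected subdiagrams then gives $\mu \le \nu_k$ for dominant $\mu$.

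For part (a): the upper bounds $m_{\nu_k}^s(M_k(\lambda)) \le \delta_{s,1}$ and (when $k_\lambda\in\supp(\lambda)$) $m_\nu^s(M_k(\lambda)) \le \delta_{s,1}$ follow from an explicit weight-space dimension count in the spirit of Proposition~\ref{p:parts}, combined with part (c) which rules out any other dominant weights in the intermediate range. For the lower bounds I would construct the explicit degree-one vectors $v_{\nu_k} = x^-_{\vartheta_{I_k^\lambda},1}w$ and $v_\nu = x^-_{\vartheta_{I^\lambda},1}w$. Since $I_k^\lambda$ and $I^\lambda$ both contain the trivalent node $i_*$, neither is contained in any single $I_i$ with $i \in \partial I_k$, so the defining relations do not directly kill these vectors. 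A routine computation using the Serre relations together with~\eqref{e:nonormrv} shows $\lie n^+ v_{\nu_k} = \lie n^+ v_\nu = 0$, and both weights are dominant under the given hypotheses; hence each generates a copy of the corresponding simple $\lie g$-module in $M_k(\lambda)[1]$.

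Finally, for part (b): since $\ch(L(\bs{\omega})) = \ch(V_q(\bs{\omega}))$ by~\eqref{e:grlimch}, the first assertion reduces to $m_{\nu_k}(V_q(\bs{\omega})) = 1$, which is the independent computation deferred to Section~\ref{ss:KR}. For the second assertion, it suffices to show that the image of $v_\nu$ in $L(\bs{\omega})$ is nonzero; I would do this by restricting the $\lie g[t]$-module structure to the diagram subalgebra $\lie g_{I^\lambda}[t]$, where the coherent graded limit is known explicitly from \cite{mou} in type $D_4$ and from \cite{naoi:D} in type $D$, and applying part (a) within $M_{k_\lambda}(\lambda_{I^\lambda})$ to produce the required nonzero image. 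The main obstacle is part (c): controlling the residual terms $x^-_{\vartheta_{J'},1}w$ that arise when the defining relations are used to reduce arbitrary PBW monomials, which is exactly why the connectivity hypotheses on $I^\lambda$ built into Lemma~\ref{l:<nuk} (via Lemma~\ref{l:remroots}) are needed; outside those hypotheses further weights could a priori survive.
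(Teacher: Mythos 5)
Your skeleton for part (c) and for the upper bounds in part (a) is essentially the paper's: one shows, via the defining relations of $M_k(\lambda)$ and the commutator trick of Lemmas \ref{l:remroots1} and \ref{l:remroots}, that $M_k(\lambda)=U(\lie n^-)U(S_{>1}^\lambda)U(S_1^\lambda)v$, and then reads off $m_{\nu_k}^s\le\delta_{s,1}$, $m_\nu^s\le\delta_{s,1}$ and part (c) from the minimality of $\vartheta_{I_k^\lambda}$ among the roots occurring in positive degree. But two of your intermediate claims are not justified: Lemma \ref{l:many0} does not apply ``verbatim'' to $M_k(\lambda)$ --- it and the lemmas it rests on (Lemmas \ref{lema i0} and \ref{lema multi subdiagrama}) are statements about highest-$\ell$-weight modules over $U_q(\tlie g)$, not about an abstractly presented $\lie g[t]$-module whose multiplicities could a priori exceed those of every $L(\bs{\omega})$; and the asserted restriction $\rupp(\lambda-\mu)\subseteq I^\lambda$ is neither what is needed nor true in general (what the PBW analysis yields is $\lambda-\mu\ge\vartheta_{I_k^\lambda}$). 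Note also that part (a) is stated without the hypotheses of Lemma \ref{l:<nuk}, so it cannot be made to depend on part (c).

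The serious gap is in your lower bounds. First, $v_\nu=x^-_{\vartheta_{I^\lambda},1}w$ is \emph{not} annihilated by $\lie n^+$: applying $x^+_{k_\lambda}$ (and, if $d(k_\lambda,i_*)>1$, further raising operators along the $k$-branch) produces a nonzero multiple of $x^-_{\vartheta_{I_k^\lambda},1}w$, which is precisely the vector whose nonvanishing you need for the other half of (a). Second, and more fundamentally, verifying highest-weight conditions in a module given by generators and relations says nothing about nonvanishing, and deciding whether the weight space $M_k(\lambda)[1]_\nu$ actually contains a $V(\nu)$-summand (rather than being exhausted by the $V(\lambda)$- and $V(\nu_k)$-isotypic components) is exactly the hard point: indeed Proposition \ref{p:outer}(d) shows that for incoherent $\bs{\omega}$ the image of this same vector in $L(\bs{\omega})$ contributes no $V(\nu)$-summand, so no ``routine computation with Serre relations'' can settle this. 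The paper obtains all lower bounds from the quantum side: $m_{\nu_k}(V_q(\bs{\omega}))=1$ via Proposition \ref{p:Anotmintheta} and Lemma \ref{lema multi subdiagrama}, and $m_\nu(V_q(\bs{\omega}))=1$ in the coherent case is Proposition \ref{p:outer}(c), proved by the qcharacter analysis of Section \ref{ss:tpcoh}; these transfer to $M_k(\lambda)$ because $L(\bs{\omega})$ is a quotient of it with $\ch(L(\bs{\omega}))=\ch(V_q(\bs{\omega}))$. Your alternative for the second half of (b) --- restricting to $\lie g_{I^\lambda}[t]$ and invoking the explicit coherent graded limits of \cite{mou,naoi:D} --- only covers $I^\lambda$ of type $D$, whereas the proposition must also handle configurations with $I^\lambda$ of type $E$, which is exactly why the paper routes this step through qcharacters instead.
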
 

The first equalities in parts (a) and (b) as well part (c) will be proved in Section \ref{ss:upper}. The second equality in part (a) is a consequence of the second statement of part (b) together with Lemma \ref{l:M>>L} and \eqref{e:uppbnu}. The second statement of part (b) is a consequence of Proposition \ref{p:outer}(c) which will be proved in Section \ref{ss:tpcoh} (see also Remark \ref{r:cpconj}).

It follows from Proposition \ref{p:outerM} that
\begin{equation}
M_k(\lambda)\cong_{\lie g} V(\lambda)\oplus V(\nu_k)\oplus N\oplus \bigoplus_{\substack{\mu<\nu_k\\\mu\ngeq\nu}}
 V(\mu)^{\oplus m_\mu(M_k(\lambda))},
\end{equation}
where
\begin{equation*}
N\cong  \begin{cases} V(\nu),& \text{if } k_\lambda\in\supp(\lambda),\\ 0,& \text{otherwise}. \end{cases}
\end{equation*}
Moreover,
\begin{equation*}
V(\nu_k)\oplus N \subseteq M_k(\lambda)[1].
\end{equation*}
Let $N_k(\lambda)$ be the quotient of $M_k(\lambda)$ by the $\lie g[t]$-submodule generated by $N$. In light of the above results, part (d) of Proposition \ref{p:outer} becomes equivalent to the following lemma.

\begin{lem}\label{l:increl}
	Assume $\bs{\omega}$ is incoherent and not $k$-minimal.  Then, under the hypothesis of Proposition \ref{p:outer}, $L(\bs{\omega})$ is a quotient of $N_k(\lambda)$.\endd
\end{lem}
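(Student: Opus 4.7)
The plan is to derive Lemma \ref{l:increl} by combining Lemma \ref{l:M>>L} with Proposition \ref{p:outer}(d), exploiting the fact that $N$ is, up to this grading analysis, the unique $V(\nu)$-summand of $M_k(\lambda)$. First, Lemma \ref{l:M>>L} yields a surjective $\lie g[t]$-module homomorphism $\pi: M_k(\lambda)\twoheadrightarrow L(\bs{\omega})$. If $k_\lambda\notin\supp(\lambda)$, then $N=0$ by the case distinction preceding the lemma, so $N_k(\lambda)=M_k(\lambda)$ and the conclusion is immediate. Thus assume $k_\lambda\in\supp(\lambda)$.

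Next, by the identity $\ch(L(\bs{\omega}))=\ch(V_q(\bs{\omega}))$ from \eqref{e:grlimch}, we have $m_\nu(L(\bs{\omega}))=m_\nu(V_q(\bs{\omega}))$. Proposition \ref{p:outer}(d), applicable here because $\bs{\omega}$ is assumed incoherent and $k_\lambda\in\supp(\lambda)$, gives $m_\nu(V_q(\bs{\omega}))=0$, so $m_\nu(L(\bs{\omega}))=0$ as well. Now, $N\subseteq M_k(\lambda)[1]$ is a $\lie g$-submodule isomorphic to $V(\nu)$, and the restriction $\pi|_N: N\to L(\bs{\omega})$ is a $\lie g$-equivariant map. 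Because $V(\nu)$ is a simple $\lie g$-module, $\pi|_N$ is either zero or injective; in the latter case $L(\bs{\omega})$ would contain $V(\nu)$ as a $\lie g$-submodule, forcing $m_\nu(L(\bs{\omega}))\geq 1$, a contradiction. Hence $\pi|_N=0$, so $N\subseteq\ker(\pi)$, and since $\ker(\pi)$ is a $\lie g[t]$-submodule, it contains the entire $\lie g[t]$-submodule of $M_k(\lambda)$ generated by $N$. Therefore $\pi$ factors through the quotient $N_k(\lambda)$, proving that $L(\bs{\omega})$ is a quotient of $N_k(\lambda)$.

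In this sense, Lemma \ref{l:increl} is essentially formally equivalent to Proposition \ref{p:outer}(d): the above simple factoring argument, combined with the uniqueness of the $V(\nu)$-summand encoded in Proposition \ref{p:outerM}(a), reduces it to the vanishing of the outer multiplicity at $\nu$. Consequently, the main obstacle is not in Section \ref{ss:grlim} itself but in carrying out the computation of $m_\nu(V_q(\bs{\omega}))$ in the incoherent case. That computation, amounting to establishing \eqref{e:4n-2} with $\xi=0$, is where the genuine difficulty lies: it requires the qcharacter analysis of irreducible factors of incoherent tensor products of Kirillov-Reshetikhin modules associated to extremal nodes carried out in Section \ref{ss:tpinc}, together with the dimension computations of Section \ref{ss:parts} and the diagram-subalgebra technique of Section \ref{ss:diag}. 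Once Proposition \ref{p:outer}(d) is in hand, Lemma \ref{l:increl} follows by the two paragraphs above.
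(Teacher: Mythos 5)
Your proof is correct and follows exactly the route the paper intends: the paper states that Lemma \ref{l:increl} is equivalent to Proposition \ref{p:outer}(d) (which it proves independently via the qcharacter analysis of Section \ref{ss:tpinc}), and your factoring argument --- $m_\nu(L(\bs{\omega}))=0$ forces the simple $\lie g$-summand $N\cong V(\nu)$ into the kernel of the surjection $M_k(\lambda)\twoheadrightarrow L(\bs{\omega})$ from Lemma \ref{l:M>>L} --- is precisely the direction of that equivalence needed here. Your closing observation that the real content lies in establishing \eqref{e:4n-2} with $\xi=0$ also matches the paper's organization.
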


\begin{rem}\label{r:demazure}
	The theory of $\lie g$-stable Demazure modules plays a prominent role in the study of graded limits of minimal affinizatios. 
	In \cite{lina,naoi:grlim,naoi:D}, it has been proved that the graded limits of minimal affinizations $\lie g$ of classical type or $G_2$ with regular highest weight are generalized Demazure modules. It appears to us that this is no longer the case for the incoherent minimal affinzations as the simplest case does not appear to be even a Chari-Venkatesh module. Understanding the structure of the module $N_k(\lambda)$, which is most likely isomorphic to the graded limit of the incoherent minimal affinizations, from the point of view of Demazure theory is certainly a topic that must be investigated. We shall come back to this in the future.\endd
\end{rem}

\section{Technical Background}\label{ss:tec}

In this section we review the technical background we shall need for proving Proposition \ref{p:outer} and Theorem \ref{t:cohnotmin}.

\subsection{On the Dimensions of Certain Weight Spaces}\label{ss:parts}

Let $p:Q\to\mathbb Z$ be Kostant's partition function. In other words, $p(\eta)$ is the number of ways of writing $\eta$ as a sum of positive roots or, equivalently, 
$$p(\eta)=\#\mathcal P_\eta,$$
where 
\begin{equation*}
\mathcal P_\eta =\left\{ \xi:R^+\to\mathbb Z_{\ge 0}: \eta = \sum_{\alpha\in R^+}\xi(\alpha)\, \alpha\right\}.
\end{equation*}
In particular, $p(\alpha_i)=1$ for all simple roots and $\eta\in Q\setminus Q^+\Rightarrow p(\eta)=0$. 
The PBW theorem implies $\dim(U(\lie n^+)_\eta)=p(\eta)$. In particular, for $\lambda\in\lie h^*$ and $M(\lambda)$ the Verma module of highest-weight $\lambda$, we have
\begin{equation*}
\dim(M(\lambda)_{\lambda-\eta}) = p(\eta).
\end{equation*}
In the proof of \eqref{e:4n-2}, we will use a similarly flavored formula which applies to $V(\lambda), \lambda\in P^+$, for certain $\eta\in Q^+$.\footnote{Although we are assuming throughout the text that $\lie g$ is simply laced, Section \ref{ss:parts} is valid in complete generality with no need of modifications in the text.}
Thus, consider
$$\mathcal P_\eta^\lambda = \{\xi\in\mathcal P_\eta: \alpha\in\supp(\xi)\Rightarrow \rupp(\alpha)\cap\supp(\lambda)\ne\emptyset\}$$
where
\begin{equation*}
\supp(\xi)=\{\alpha\in R^+:\xi(\alpha)\ne 0\}.
\end{equation*}
Let $v$ be a highest-weight vector for $V(\lambda)$ and recall that, for all  subdiagram $J\subseteq I$, 
\begin{equation}\label{e:nosupvan}
J\cap\supp(\lambda)=\emptyset \quad\text{and}\quad \rupp(\alpha)\subseteq J \quad\Rightarrow\quad x_{\alpha}^-v = 0.
\end{equation}
A straightforward application of the PBW theorem then gives
\begin{equation}\label{e:upb}
\dim(V(\lambda)_{\lambda-\eta}) \le \#\mathcal P_\eta^\lambda \quad\text{for all}\quad \lambda\in P^+, \eta\in Q.
\end{equation}

\begin{prop}\label{p:parts}
	If $J\subseteq I$ is connected and $\lambda\in P^+$ satisfies $\supp(\lambda)\cap J\subseteq \partial J$, then
	\begin{equation*}
	\dim(V(\lambda)_{\lambda-\vartheta_J}) = \#\mathcal P_{\vartheta_J}^\lambda.
	\end{equation*}
\end{prop}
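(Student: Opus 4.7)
The plan is to upgrade the upper bound \eqref{e:upb} to an equality by identifying $V(\lambda)_{\lambda-\vartheta_J}$ with the same weight space of a parabolic Verma module attached to the subdiagram $I^c:=I\setminus\supp(\lambda)$.

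To carry this out, I would let $\lie p$ denote the standard parabolic subalgebra of $\lie g$ with Levi $\lie g_{I^c}+\lie h$ and nilradical $\bigoplus_{\alpha\in R^+\setminus R^+_{I^c}}\lie g_\alpha$, and write $\lie u^-$ for the opposite nilradical. Since $\lambda(h_j)=0$ for every $j\in I^c$, the weight $\lambda$ extends trivially along $\lie g_{I^c}$, so the parabolic Verma module $M_{\lie p}(\lambda):=U(\lie g)\otimes_{U(\lie p)}\C v_\lambda$ is defined, and PBW gives $M_{\lie p}(\lambda)\cong U(\lie u^-)\cdot v_\lambda$ as vector spaces. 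Because a positive root $\alpha$ lies in $R^+\setminus R^+_{I^c}$ precisely when $\rupp(\alpha)\cap\supp(\lambda)\neq\emptyset$, the PBW monomials of weight $-\vartheta_J$ in $U(\lie u^-)$ are in bijection with $\mathcal{P}^\lambda_{\vartheta_J}$, which yields
\[ \dim M_{\lie p}(\lambda)_{\lambda-\vartheta_J}=\#\mathcal{P}^\lambda_{\vartheta_J}. \]

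It will then remain to show that the canonical surjection $M_{\lie p}(\lambda)\twoheadrightarrow V(\lambda)$ is an isomorphism on this weight space. Its kernel is the submodule $K=\sum_{i\in\supp(\lambda)}U(\lie n^-)\,v_i$ with $v_i:=(x_i^-)^{\lambda(h_i)+1}v_\lambda$ a highest weight vector of weight $\lambda-(\lambda(h_i)+1)\alpha_i$, so that the $U(\lie g)$-submodule it generates reduces to $U(\lie n^-)\,v_i$. A nonzero contribution to $K_{\lambda-\vartheta_J}$ would require $\vartheta_J-(\lambda(h_i)+1)\alpha_i\in Q^+$ for some $i\in\supp(\lambda)$, but its $\alpha_i$-coefficient equals $[i\in J]-\lambda(h_i)-1\le -1$ since $\lambda(h_i)\ge 1$. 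Hence $U(\lie n^-)_{-\vartheta_J+(\lambda(h_i)+1)\alpha_i}=0$ for every $i\in\supp(\lambda)$, so $K_{\lambda-\vartheta_J}=0$ and the claim follows.

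The most delicate point I anticipate is the identification $V(\lambda)\cong M_{\lie p}(\lambda)/K$, which comes from observing that passing from $M(\lambda)$ to $M_{\lie p}(\lambda)$ already imposes the relations $x_j^-v_\lambda=0$ for $j\in I^c$ (i.e. those $(x_j^-)^{\lambda(h_j)+1}v_\lambda=0$ for which $\lambda(h_j)=0$), leaving only $(x_i^-)^{\lambda(h_i)+1}v_\lambda=0$ with $i\in\supp(\lambda)$ to be enforced. Once this standard parabolic setup is in place, neither the connectedness of $J$ nor the hypothesis $\supp(\lambda)\cap J\subseteq\partial J$ enters the argument, which suggests the equality holds in broader generality; presumably those assumptions are included only for notational economy or to match downstream applications in the paper.
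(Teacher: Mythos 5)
Your proof is correct and is essentially the paper's own argument in different clothing: the paper realizes $V(\lambda)_{\lambda-\vartheta_J}$ as $\bigl(U(\lie n^-_\lambda)/U'_\lambda\bigr)_{-\vartheta_J}$ with $\lie n^-_\lambda$ exactly your opposite nilradical $\lie u^-$, and kills the relations in that weight by the same observation that $\het_i(\vartheta_J)\le 1<\lambda(h_i)+1$. Your closing remark is also consistent with the paper's proof, which likewise never uses connectedness of $J$ or $\supp(\lambda)\cap J\subseteq\partial J$; those hypotheses only matter for the explicit enumeration of $\mathcal P_{\vartheta_J}^\lambda$ that follows.
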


\begin{proof}
	Since $\dim(V(\lambda)_{\lambda-\eta})=	\dim(V(\lambda_J)_{\lambda_J-\eta_J})$ if $\eta\in Q_J$, we	may assume $J = I$.
 It is well known that we have an isomorphism of $\lie n^-$-modules
 \begin{equation*}
   V(\lambda) \cong U(\lie n^-)/U_\lambda \qquad\text{with}\qquad U_\lambda =\sum_{i\in I} U(\lie n^-)(x_i^-)^{\lambda(h_i)+1}.
 \end{equation*} 
 Setting
 \begin{equation*}
   \lie n^-_\lambda = \bigoplus_{\substack{\alpha\in R^+:\\ \rupp(\alpha)\cap\supp(\lambda)\ne \emptyset}} \lie g_{-\alpha},
 \end{equation*}	
 it follows from the PBW Theorem that we have an isomorphism of vector spaces
 \begin{equation*}
 V(\lambda)_{\lambda-\vartheta} \cong \left(U(\lie n^-_\lambda)/U'_\lambda\right)_{-\vartheta} \qquad\text{with}\qquad U'_\lambda =\sum_{\substack{i\in I:\\ \lambda(h_i)\ne 0}} U(\lie n^-_\lambda)(x_i^-)^{\lambda(h_i)+1}.
 \end{equation*} 
 Since $\left( U'_\lambda\right)_{-\vartheta}=0$ and $\dim(U(\lie n^-_\lambda)_{-\vartheta}) = \#\mathcal P_{\vartheta}^\lambda$, the proposition follows.
\end{proof}
		
Let us make explicit all possible values of $\#\mathcal P_{\vartheta_J}^\lambda$. As in the proof of the proposition, to simplify notation, we assume $J=I$ and, hence, $\supp(\lambda)\subseteq\partial I$. In that case, 
\begin{equation}\label{e:dimVnuall}
  \#\mathcal P_{\vartheta}^\lambda = 
  \begin{cases}
    1,& \text{if } \#\supp(\lambda)=1,\\
    \#\overline{\supp}(\lambda),& \text{if } \#\supp(\lambda)=2,\\
    3(n-2)+1,& \text{if } \#\supp(\lambda)=3 \text{ and } \lie g \text{ is of type } D_n,\\
    4(n-2)-2,& \text{if } \#\supp(\lambda)=3 \text{ and } \lie g \text{ is of type } E_n.
  \end{cases}
\end{equation}
To prove this, we will explicitly describe the elements of $\mathcal P_{\vartheta}^\lambda$. Notice that
\begin{equation*}
	\xi\in\mathcal  P_\vartheta^\lambda \quad\Rightarrow\quad \#\supp(\xi)\le\#\supp(\lambda) \quad\text{and}\quad \xi(\alpha)\le 1 \quad\text{for all}\quad \alpha\in R^+.
\end{equation*}
	Therefore, in order to describe $\xi$, it suffices to describe its support. If $\#\supp(\lambda)=1$, the unique element $\xi\in \mathcal P_\vartheta^\lambda$ is characterized by $\supp(\xi)=\{\vartheta\}$.  If $\supp(\lambda)=\{k,l\}$ with $k\ne l$, then, for each $i\in[k,l]$ let $\xi_i$ be the element whose support is $$\{\vartheta_{[k,i]},\vartheta_{(i,l]}\}\setminus\{0\}.$$
	One easily checks that $P_\vartheta^\lambda =\{\xi_i:i\in [k,l]\}$, which proves \eqref{e:dimVnuall} in this case. Finally, assume $\#\supp(\lambda)=3$ and write $\partial I=\{k,l,m\}$ such that $\{m\}$ is a connected component of $I\setminus\{i_*\}$ and $\#[l,i_*]\le \#[k,i_*]$. In particular, $I_m=I\setminus\{m\}=[k,l]$ and, for type $D$, $l$ and $m$ are spin nodes. For any connected subdiagram $I'\subseteq I$ and $i\in I'$, set
	\begin{gather*}
		\mathscr P(I') = \{J\subseteq I': J \text{ is connected}\},\quad \mathscr P_i(I') = \{J\in\mathscr P(I'): i\in J\},\\ \text{and}\qquad \mathscr P_i^o(I') = \mathscr P_i(I') \cup \{\emptyset\}.
	\end{gather*}
	In the case that $I'=I$ we may simply write $\mathscr P$ and $\mathscr P_i$. Note that
	\begin{equation}
	\#\mathscr P_k(I_m) = n-1 \qquad\text{and}\qquad \#\mathscr P_k^o([k,i_*)) = 
	\begin{cases}
	n-2, &\text{for type } D,\\ n-3, &\text{for type }E.
	\end{cases}
	\end{equation}
	Given $J\in\mathscr P_k(I_m)$, let $\xi_J$ be determined by
	\begin{equation*}
	\supp(\xi_J) = \{\alpha_m, \vartheta_J,\vartheta_{I_m\setminus J}\}\setminus\{0\}.
	\end{equation*}
	Given $J\in\mathscr P_k^o([k,i_*))$, let $\xi_J'$ and $\xi_J''$ be determined by
	\begin{equation*}
	\supp(\xi_J') = \{\vartheta_J,\vartheta_{I\setminus [k,m]},\vartheta_{[k,m]\setminus J}\}\setminus\{0\}
	\qquad\text{and}\qquad \supp(\xi_J'') =  \{\vartheta_J,\vartheta_{I\setminus J}\}\setminus\{0\}.
	\end{equation*}
	One easily checks that the elements $\xi_J,\xi_{J'}',\xi_{J'}'', J\in \mathscr P_k(I_m), J'\in \mathscr P_k^o([k,i_*))$ are all distinct. Moreover,
	if $\lie g$ is of type $D$, then
	\begin{equation*}
	\mathcal P_\vartheta^\lambda = \{\xi_J,\xi_{J'}',\xi_{J'}'': J\in \mathscr P_k(I_m), J'\in \mathscr P_k^o([k,i_*))\}, 
	\end{equation*}
	which proves \eqref{e:dimVnuall}. Consider also $\xi_J''', J\in\mathscr P_k^o([k,i_*))$, determined by
	\begin{equation*}
	\supp(\xi_J''') =  \{\vartheta_J,\alpha_l,\vartheta_{I\setminus (J\cup\{l\})}\}\setminus\{0\}.
	\end{equation*}
	If $\lie g$ is of type $D$, we have $\xi_J'=\xi_J'''$ for all $J\in\mathscr P_k^o([k,i_*))$. However, for type $E$, these are actually new elements and one easily checks that 
	\begin{equation*}
	\mathcal P_\vartheta^\lambda = \{\xi_J,\xi_{J'}',\xi_{J'}'',\xi_{J'}''': J\in \mathscr P_k(I_m), J'\in \mathscr P_k^o([k,i_*))\},
	\end{equation*}
	completing the proof of \eqref{e:dimVnuall}.

It will  be useful to compare $\dim(V(\lambda)_{\lambda-\vartheta})$ with $\dim(W_{\lambda-\vartheta})$ where
\begin{equation}\label{e:Wparts}
	W=\bigotimes_{i\in\partial I} V(\lambda_i), \qquad \lambda_i=\lambda(h_i)\omega_i,
\end{equation}
and we keep assuming $\supp(\lambda)\subseteq\partial I$. 
We will see that
\begin{equation}\label{e:dimWnu}
\dim(W_{\lambda-\vartheta}) = \dim(V(\lambda)_{\lambda-\vartheta}) + m \quad\text{with}\quad m =
\begin{cases}
0,& \text{if } \#\ \supp(\lambda)=1,\\ 1,& \text{if } \#\ \supp(\lambda)=2,\\n+1,& \text{if } \#\ \supp(\lambda)=3.
\end{cases}
\end{equation}

Let $\mathscr J^\lambda$ be the set of families $J=(J_i)_{i\in\supp(\lambda)}$ of disjoint connected subdiagrams of $I$  satisfying
\begin{equation*}
i\notin J_i \quad\Leftrightarrow\quad J_i=\emptyset
\end{equation*} 	
and, given $\eta\in Q$, set
\begin{equation*}
\mathscr J_\eta^\lambda = \left\{J\in\mathscr J^\lambda:\eta=\sum_{i\in\supp(\lambda)} \vartheta_{J_i}\right\}.
\end{equation*}
One easily sees that
\begin{equation}\label{e:dimWnuparts}
\het_i(\eta)\le 1 \quad\text{for all}\quad i\in I \qquad\Rightarrow\qquad \dim(W_{\lambda-\eta}) = \#\mathscr J_{\eta}^\lambda.
\end{equation}
Consider the map $\varPsi:\mathscr J^\lambda_\vartheta\to \mathcal P_\vartheta^\lambda$ determined by 
\begin{equation*}
\supp(\varPsi(J)) = \{\vartheta_{J_i}:i\in\partial I\}\setminus\{0\},
\end{equation*}
which is clearly surjective. We claim that, for all $\xi\in \mathcal P_\vartheta^\lambda$, we have
\begin{equation}\label{e:supxilambda}
 \#\ \varPsi^{-1}(\xi)=\varPhi(\xi)+1 \qquad\text{where}\qquad \varPhi(\xi) = \#\ \supp(\lambda)-\#\ \supp(\xi).
\end{equation}
Assuming this, we complete the proof of \eqref{e:dimWnu} as follows. If $\#\ \supp(\lambda)=1$, $\varPhi(\xi)=0$ for all $\xi\in \mathcal P_\vartheta^\lambda$. In other words, $\varPsi$ is bijective and  \eqref{e:dimWnu} follows from Proposition \ref{p:parts} and \eqref{e:dimWnuparts}.
If $\#\ \supp(\lambda)=2$, there is a unique $\xi\in \mathcal P_\vartheta^\lambda$ such that $\varPhi(\xi)\ne 0$: the one whose support is $\{\vartheta\}$. Therefore, $\#\ \mathscr J^\lambda_\vartheta = 1+\#\ \mathcal P_\vartheta^\lambda$ and \eqref{e:dimWnu} follows. Finally, If $\#\ \supp(\lambda)=3$, we have to count the sets
\begin{equation*}
\{\xi\in\mathcal P_\vartheta^\lambda: \varPhi(\xi) = 1\} \qquad\text{and}\qquad \{\xi\in\mathcal P_\vartheta^\lambda: \varPhi(\xi) = 2\}.
\end{equation*}
The second set has exactly one element: the one whose support is $\{\vartheta\}$. Therefore, we are left to show that the first set has $n-1$ elements.
But indeed, $\xi$ belongs to that set if, and only if, there exists $i\in I\setminus\{i_*\}$ such that
\begin{equation*}
\supp(\xi) = \{\vartheta_{[i,\partial i]}, \vartheta_{I\setminus[i,\partial i]}\}
\end{equation*}
where $\partial i$ is the element of $\partial I$ lying in the same connected component of $I\setminus\{i_*\}$ as $i$.

It remains to prove \eqref{e:supxilambda}. Fix $J\in\varPsi^{-1}(\xi)$. If $\varPhi(\xi)=0$, then $J_i\ne\emptyset$ for all $i\in\supp(\lambda)$ and the claim is clear. 
If $\varPhi(\xi)=1$, then there exist $k,l\in\supp(\lambda)$ such that $k\in J_l$ and, hence, $J_k=\emptyset$. One easily checks that the unique other element of $\varPsi^{-1}(\xi)$ is the one obtained from $J$ by switching $J_k$ and $J_l$. Finally, if $\varPhi(\xi)=2$, we must have $\#\ \supp(\lambda)=3$ and there exists unique $k\in\supp(\lambda)$ such that $J_k\ne\emptyset$. In particular, $\supp(\lambda)\subseteq J_k$ and the other two elements of $\varPsi^{-1}(\xi)$ are obtained from $J$ by moving $J_k$ to any of the other two positions. This completes the proof of \eqref{e:dimWnu}.

Finally, we deduce some information about the outer multiplicities in $W$. Namely, write
\begin{equation*}
W\cong \bigoplus_{\mu\in P^+} V(\mu)^{\oplus m_\mu(W)}
\end{equation*}

\begin{prop}\label{e:upbclassical}
	Let $\mu\in P^+$ be such that $\het_i(\lambda-\mu)\le 1$ for all $i\in I$. Then, $m_\mu(W)\ne 0$ if and only if $\mu=\lambda-\vartheta_J$ with $J = \overline S$ for some $S\subseteq\supp(\lambda), \#S\ne 1$. In that case,  $m_\mu(W)=1$ if $\#S<3$ and  $m_\mu(W)=2$ if $\#S=3$.
\end{prop}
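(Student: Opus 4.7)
The plan is to proceed by induction on $\het(\lambda-\mu)$, using the identity
\[
\dim W_\mu \;=\; m_\mu(W) \;+\; \sum_{\mu<\mu'\le \lambda} m_{\mu'}(W)\,\dim V(\mu')_\mu
\]
together with $\dim W_{\lambda-\vartheta_T}=\#\mathscr J^\lambda_{\vartheta_T}$ from \eqref{e:dimWnuparts}, where $T\subseteq I$ is such that $\lambda-\mu=\vartheta_T$ (well-defined by the hypothesis $\het_i(\lambda-\mu)\le 1$). A direct inspection of $\mu(h_j)=\lambda(h_j)-\vartheta_T(h_j)$ shows that $\mu\in P^+$ forces (i)~$T\cap \partial I\subseteq \supp(\lambda)$ and (ii)~every interior node of $T$ has at least two Dynkin neighbors in $T$. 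Since the Dynkin diagram of $\lie g$ is a tree with leaf set $\partial I$, it follows that each connected component of $T$ is of the form $\bar{S_c}$ with $S_c\subseteq \supp(\lambda)$ nonempty; in particular, if $T$ is connected then $T=\bar S$ with $S=T\cap \partial I$.

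The inductive step splits by the shape of $T=\bar S$. For $\#S=0$ (i.e.\ $T=\emptyset$), $\mu=\lambda$ and $m_\lambda(W)=1$. For $\#S=1$, a PBW count gives $\dim W_\mu=\dim V(\lambda)_\mu=1$, hence $m_\mu(W)=0$. For $\#S=2$, say $T=[k,l]$, the inductive hypothesis forces the only contributing $\mu'>\mu$ to be $\mu'=\lambda$, since any $\bar{S'}\subsetneq[k,l]$ with $\#S'\ne 1$ must come from $S'=\emptyset$; then \eqref{e:dimWnu} immediately gives $m_\mu(W)=1$. The main calculation is $\#S=3$: here the contributing $\mu'>\mu$ are $\lambda$ together with the three weights $\lambda-\vartheta_{[k,l]}$, one for each pair $\{k,l\}\subset \partial I$, each of multiplicity $1$. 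Applying Proposition \ref{p:parts} to the branch $J=(i_*,m]$ (with $m$ the third extremal node), one verifies $\supp(\mu')\cap J\subseteq \partial J$ by a direct Cartan computation and obtains $\dim V(\mu')_\mu=d(i_*,m)$ via the type-$A$ analogue of \eqref{e:dimVnuall}. Summing over the three pairs yields $\sum_{m\in\partial I} d(i_*,m)=n-1$, and comparing with \eqref{e:dimWnu} gives $m_\mu(W)=(n+1)-(n-1)=2$.

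For the converse direction, it remains to show $m_\mu(W)=0$ when $T$ is disconnected. Writing $T=\bigsqcup_c \bar{S_c}$, both $\dim W_\mu=\#\mathscr J^\lambda_{\vartheta_T}$ and the inductive contributions on the right-hand side factor component-wise: any connected $\bar{S'}\subseteq T$ must lie inside a single $\bar{S_c}$, and since nodes in different components of $T$ are pairwise non-adjacent in $I$, the corresponding simple root vectors commute and the two sides balance, forcing $m_\mu(W)=0$. The main obstacle is the $\#S=3$ step: the computation of $\dim V(\mu')_\mu$ via Proposition \ref{p:parts} requires a careful check of the support condition and the uniform matching of the ``extra $n+1$'' in \eqref{e:dimWnu} with the partition count $\#\mathcal P_{\vartheta_J}^{\mu'}$ across types $D$ and $E$.
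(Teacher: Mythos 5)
Your proof is correct and follows essentially the same route as the paper's: both rest on comparing $\dim W_\mu$ (computed via \eqref{e:dimWnuparts} and \eqref{e:dimWnu}) with the contributions of the already-determined higher factors, the $\#S=3$ case being settled by matching $\sum_{m\in\partial I}d(i_*,m)=n-1$ against the excess $n+1$ in \eqref{e:dimWnu}, exactly as in \eqref{e:dimWnueachk}--\eqref{e:dimWnuallk}. Your explicit inductive organization and the component-wise factorization for disconnected $T$ (which does hold, since the relevant weight-space dimensions of $W$ and of each $V(\mu')$ multiply over non-adjacent components) merely spell out what the paper treats tersely.
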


\begin{proof}
	Set $J=\rupp(\lambda-\mu)$. If $J\cap\supp(\lambda)=\emptyset$, then for all $J'\subseteq J$, $\dim(V(\lambda_i)_{\lambda_i-\vartheta_{J'}})=0$ for all $i\in \partial I$ and, hence, $\dim(W_\mu)=0$. If $\#J\cap\supp(\lambda)= 1, i\in\partial I$ , and  $J'\subseteq J$, then $\dim(V(\lambda_i)_{\lambda_i-\vartheta_{J'}})\le 1$ with equality holding if, and only if, $i\in J'$. In particular, $\dim(W_\mu)=\dim(V(\lambda)_\mu)$ and, hence, $m_\mu(W)=0$. Similarly, we conclude that, if each connected component of $J$ intersects $\supp(\lambda)$ in at most one node, then  $m_\mu(W)=0$. 
	
	Let $k,l\in\supp(\lambda), k\ne l$. If $J=[k,l]$, then $\dim(W_\mu)=\dim(V(\lambda)_\mu)+1$ by \eqref{e:dimWnu} and, hence, $m_\mu(W)=1$. This proves the proposition if $I$ has no trivalent node and we can assume $\lie g$ is of types $D$ or $E$. Since $\lie g$ is simply laced and one easily sees that if there exists $j\in\partial J$ such that $j\notin\supp(\lambda)$, then $\mu=\lambda-\vartheta_J\notin P^+$. Hence, we can assume $\#\supp(\lambda)=3$ and $J=I$. 	
	
	It follows from the cases already considered that
	\begin{equation*}
	m_{\lambda-\vartheta_{[i,j]}}(W) = 1 \qquad\text{for all}\qquad i,j\in\supp(\lambda), i\ne j.
	\end{equation*}
	Let $k\in\partial I$. Writing $\nu=\lambda-\vartheta$ and using \eqref{e:dimVnuall} with $I'=(i_*,k]$ in place of $I$ and $(\lambda-\vartheta_{I_k})^{I'}$ in place of $\lambda$, we see that
	\begin{equation}\label{e:dimWnueachk}
	\dim(V(\lambda-\vartheta_{I_k})_\nu) = d(i_*,k).
	\end{equation}
	One easily checks that 
	\begin{equation}\label{e:dimWnuallk}
	   \sum_{k\in\partial I} d(k,i_*) = n-1.
	\end{equation}
	Combining this with \eqref{e:dimWnu} we get
	\begin{align*}
	  \dim(W_\nu) - \dim(V(\lambda)_\nu) - \sum_{k\in\partial I} \dim(V(\lambda-\vartheta_{I_k})_\nu) = 2.
	\end{align*}
	Since no other irreducible factor of $W$ has $\nu$ as weight, we conclude $m_\nu(W)=2$.
\end{proof}

\subsection{Reduction to Diagram Subalgebras}\label{ss:diag}
We now collect several useful technical results related to the action of diagram subalgebras.

\begin{lem}\cite[Lemma 2.4]{cp:minsl}\label{lema multi subdiagrama}
	Suppose $\emptyset \ne J\subseteq I$ defines a connected subdiagram
	of the Dynkin diagram of $\g$, let $V$ be a highest-$\ell$-weight
	module with highest-$\ell$-weight $\bs\omega\in \cal P^+,
	\lambda=\wt (\bs\omega)$, $v\in V_\lambda\setminus\{0\}$, and
	$V_J=U_q(\tlie g_J)v$. Then, $m_\mu(V)=m_{\mu_J}(V_J)$ for all
	$\mu\in\lambda-Q^+_{J}$.\hfill\qed
\end{lem}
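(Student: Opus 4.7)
The plan is to establish two facts, after which the equality $m_\mu(V)=m_{\mu_J}(V_J)$ follows by counting dimensions of ``highest-weight vector'' subspaces. The facts are: (a) for $\mu\in\lambda-Q_J^+$ we have $V_\mu=(V_J)_{\mu_J}$; and (b) the condition ``$x_i^+x=0$ for all $i\in I$'' on a vector $x\in V_\mu$ is automatically satisfied for every $i\notin J$. Granted (a) and (b), the characterization $m_\mu(V)=\dim\{x\in V_\mu:x_i^+x=0\ \forall\,i\in I\}$ and its $V_J$-analogue (with $j$ ranging over $J$) yield the claim.

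For (a), $v$ being a highest-$\ell$-weight vector forces $U_q(\tlie h)v=\mathbb{F}v$ and $U_q(\tlie n^+)v\subseteq \mathbb{F}v$ (the second follows from $x_{i,r}^+v=0$ and the fact that $U_q(\tlie n^+)$ is generated by the $x_{i,r}^+$). The triangular decomposition of $U_q(\tlie g)$ then gives $V=U_q(\tlie n^-)v$. A PBW-type argument in $U_q(\tlie n^-)$ shows that any ordered monomial yielding a nonzero vector of classical weight in $\lambda-Q_J^+$ must involve only root vectors associated to positive roots lying in $Q_J^+$; hence $V_\mu\subseteq U_q(\tlie n^-_J)v=V_J$. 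Conversely, all weights of $V_J$ lie in $\lambda+Q_J$, so agreement on $J$ between $\mu_J$ and the $J$-restriction of any such weight pins down the full weight (its $I\setminus J$-part is forced to equal that of $\lambda$), yielding $(V_J)_{\mu_J}=V_\mu$.

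For (b), let $i\notin J$. Then $\lambda-(\mu+\alpha_i)$ has $\alpha_i$-coefficient $-1$ (since $\lambda-\mu\in Q_J^+$ has zero $\alpha_i$-coefficient), and so is not in $Q^+$. Since $V$ is finite-dimensional with highest weight $\lambda$, every weight of $V$ lies in $\lambda-Q^+$, whence $V_{\mu+\alpha_i}=0$ and $x_i^+$ annihilates $V_\mu$ for free. Combining (a) and (b), the defining constraints for the highest-weight vector subspaces in $V$ and $V_J$ are identical, giving $m_\mu(V)=m_{\mu_J}(V_J)$. The main technical ingredient is the PBW step in (a) for $U_q(\tlie n^-)$, which, although standard, needs to be invoked via a PBW-basis theorem in the Drinfeld presentation of the quantum loop algebra rather than only via the root-vector generators.
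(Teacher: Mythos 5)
Your argument is correct, and since the paper simply quotes this statement from \cite[Lemma 2.4]{cp:minsl} without reproducing a proof, your route is exactly the standard one behind that citation: use the triangular decomposition to get $V=U_q(\tlie n^-)v$, deduce $V_\mu=(V_J)_{\mu_J}$ for $\mu\in\lambda-Q^+_J$ by weight considerations, observe that $x_i^+$ kills $V_\mu$ automatically for $i\notin J$ because $\lambda-(\mu+\alpha_i)\notin Q^+$, and then match the spaces of $\lie g$- and $\lie g_J$-highest-weight vectors via complete reducibility. One minor simplification: no PBW basis in the Drinfeld presentation is actually needed in your step (a) --- since $U_q(\tlie n^-)$ is generated as an algebra by the $x^-_{i,r}$, each of classical weight $-\alpha_i$, any monomial landing in a weight of the form $\lambda-\eta$ with $\eta\in Q^+_J$ already involves only indices in $J$ by linear independence of the simple roots, which places it in $U_q(\tlie g_J)$.
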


Keeping the notation of Lemma \ref{lema multi subdiagrama}, notice that if $V$ is irreducible, then $V_J\cong V_q(\bs\omega_J)$.
Hence,
\begin{equation}\label{eq mult subdiagram}
\bs\nu\in \bs\omega\cal Q_J \; \Rightarrow \;
\dim(V_q(\bs\omega)_{\bs\nu})=\dim(V_q(\bs\omega_J)_{\bs\nu_J}).
\end{equation}

The next lemma is an easy consequence of \cite[Lemma 2.6]{cp:minsl}.

\begin{lem}\label{lema i0}
	Let $i_0\in I$ be such that
	$$I=J_1\sqcup \{i_0\}\sqcup J_2 \;\;\; (\textrm{disjoint union})$$
	where $J_1$ is of type $A$, $J_2\sqcup\{i_0\}$ is connected and
	$c_{jk}=0$ for all $j\in J_1$, $k\in J_2$. Let $\bs\omega\in \cal
	P^+,\lambda=\wt (\bs\omega)$, and suppose $V_q(\bs\omega_{J_1})$ is
	a minimal affinization of $V_q(\lambda_{J_1})$. Let also
	$$\mu=\lambda-\sum_{j\in I\setminus\{i_0\}} s_j\alpha_j \quad
	\text{with}\quad s_j\in \Z_{\ge 0} \quad\text{for all}\quad j\in I\setminus{i_0}.$$
	If $m_\mu(V_q(\bs\omega))>0$, then $s_j=0$ for all $j\in J_1$.\hfill\qed
\end{lem}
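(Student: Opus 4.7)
The plan is to exploit the commutativity of $U_q(\tlie g_{J_1})$ and $U_q(\tlie g_{J_2})$ (which follows from $c_{jk}=0$ for $j\in J_1, k\in J_2$) together with the fact that a minimal affinization in type $A$ is irreducible as $U_q(\lie g_{J_1})$-module, in order to show that the weight space $V_q(\bs\omega)_\mu$ is contained in a sum of $U_q(\lie g_{J_1})$-submodules each isomorphic to $V_q(\lambda_{J_1})$. A $\lie g$-highest weight vector of weight $\mu=\lambda-\sum_{j\ne i_0}s_j\alpha_j$ is in particular $\lie g_{J_1}$-highest of weight $\lambda_{J_1}-\sum_{j\in J_1}s_j\alpha_j$, and such a vector cannot exist in that sum unless $s_j=0$ for all $j\in J_1$.

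Write $\mu=\lambda-\eta_1-\eta_2$ with $\eta_1\in Q^+_{J_1},\eta_2\in Q^+_{J_2}$ (using that no $\alpha_{i_0}$ appears). Because $J_1$ and $J_2$ are mutually disconnected, no positive root of $\lie g$ has simultaneously a nontrivial $Q_{J_1}$ and $Q_{J_2}$ component without also containing $\alpha_{i_0}$. Hence in the PBW basis of $U_q(\tlie n^-)$, the weight space at $-(\eta_1+\eta_2)$ is spanned by monomials in loop root vectors $x^-_{\alpha,r}$ with $\alpha\in R^+_{J_1}\cup R^+_{J_2}$. Combined with the fact that $U_q(\tlie g_{J_1})$ and $U_q(\tlie g_{J_2})$ commute, and with the quantum analogue of Proposition \ref{p:genbycurrent} giving $V_q(\bs\omega)=U_q(\tlie n^-)v$, this yields
\begin{equation*}
V_q(\bs\omega)_\mu = U_q(\tlie n^-_{J_1})_{-\eta_1}\,U_q(\tlie n^-_{J_2})_{-\eta_2}\,v.
\end{equation*}

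Next, for each $u_2\in U_q(\tlie n^-_{J_2})_{-\eta_2}$ the vector $u_2v$ has $\lie g_{J_1}$-weight $\lambda_{J_1}$, and since $U_q(\lie g_{J_1})$ commutes with $u_2$ it is $\lie g_{J_1}$-highest of that weight. Its $U_q(\lie g_{J_1})$-orbit is thus a quotient of $V_q(\lambda_{J_1})$. On the other hand, by hypothesis $V_q(\bs\omega_{J_1})=U_q(\tlie g_{J_1})v$ is a minimal affinization in type $A$ and is therefore irreducible as $U_q(\lie g_{J_1})$-module, isomorphic to $V_q(\lambda_{J_1})$. Using the commutativity once more, $u_2\,V_q(\bs\omega_{J_1})$ is either zero or isomorphic to $V_q(\lambda_{J_1})$ as $U_q(\lie g_{J_1})$-module, and it contains $U_q(\tlie n^-_{J_1})_{-\eta_1}\,u_2v$. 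Hence $V_q(\bs\omega)_\mu$ lies in a direct sum of $U_q(\lie g_{J_1})$-summands all isomorphic to $V_q(\lambda_{J_1})$. If $m_\mu(V_q(\bs\omega))>0$, a $\lie g$-highest weight vector of weight $\mu$ would be $\lie g_{J_1}$-highest of weight $\lambda_{J_1}-\eta_1$ inside that direct sum, forcing $\eta_1=0$.

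The main obstacle is the verification that $U_q(\tlie g_{J_1})$ and $U_q(\tlie g_{J_2})$ commute at the level of Drinfeld loop generators, not just Chevalley ones, and that the PBW decomposition of $U_q(\tlie n^-)_{-(\eta_1+\eta_2)}$ respects the block structure imposed by $c_{jk}=0$. Both are standard consequences of the Drinfeld presentation and the quantum PBW theorem, but require a careful citation; once established the argument above is essentially formal.
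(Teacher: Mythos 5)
Your proof is correct. For comparison: the paper offers no argument of its own here — the lemma is dispatched as ``an easy consequence of [Chari--Pressley, Lemma 2.6]'' — and what you have written is essentially the standard argument that underlies that citation: factor the relevant weight space of $U_q(\tlie n^-)$ through the two commuting loop subalgebras, and use that the $J_1$-part of the module is $U_q(\lie g_{J_1})$-irreducible (which is exactly what ``minimal affinization in type $A$'' buys you, via the remark following Lemma \ref{lema multi subdiagrama} identifying $U_q(\tlie g_{J_1})v$ with $V_q(\bs\omega_{J_1})$). Two minor points of hygiene. First, the sum $\sum_{u_2} u_2\,U_q(\tlie g_{J_1})v$ containing $V_q(\bs\omega)_\mu$ need not be direct; but it is an isotypic $U_q(\lie g_{J_1})$-module of type $V_q(\lambda_{J_1})$, which is all the last step requires, since its $\lie n^+_{J_1}$-invariants are then concentrated in $\lie g_{J_1}$-weight $\lambda_{J_1}$, and $(\eta_1)_{J_1}=0$ forces $\eta_1=0$ because the type-$A$ Cartan matrix is invertible. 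Second, the two inputs you flag at the end are genuinely standard: for $c_{jk}=0$ the Drinfeld relations give $[x^{\pm}_{j,r},x^{\pm}_{k,s}]=[x^{+}_{j,r},x^{-}_{k,s}]=[h_{j,r},x^{\pm}_{k,s}]=0$, and the triangular decomposition yields $V_q(\bs\omega)=U_q(\tlie n^-)v$; with these, the spanning of $U_q(\tlie n^-)_{-(\eta_1+\eta_2)}$ by ordinary monomials in the generators $x^-_{j,r}$ (no genuine PBW theorem needed) already gives the factorization $U_q(\tlie n^-_{J_1})_{-\eta_1}U_q(\tlie n^-_{J_2})_{-\eta_2}$, because the weight constraint excludes $\alpha_{i_0}$ and hence every generator occurring has index in $J_1\cup J_2$.
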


\begin{prop}\cite[Proposition 3.3]{cp:minsl}\label{p:Anotmintheta}
	Suppose $\lie g$ is of type $A$ and let $\bs\omega\in \cal P^+$, $\lambda=\wt (\bs\omega)$ be such that
	\begin{enumerate}
		\item $V_q(\bs\omega)$ is not a minimal affinization of $V_q(\lambda)$, and
		\item $V_q(\bs\omega_{I\setminus \{i\}})$ is a minimal affinization of $V_q(\lambda_{I\setminus \{i\}})$ for any $i\in\partial I$.
	\end{enumerate}
	Then, $m_{\lambda-\vartheta}(V_q(\bs\omega))>0$.\hfill\qed
\end{prop}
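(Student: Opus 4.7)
\noindent\emph{Proof sketch.} The plan is to locate a maximal non-top dominant weight $\mu^*$ of $V_q(\bs\omega)$ and identify it with $\lambda-\vartheta$.

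First, I check that $\lambda-\vartheta\in P^+$, i.e., that $\lambda(h_1),\lambda(h_n)\ge 1$. Indeed, if $\lambda(h_1)=0$, then $\bs\omega_1$ is the constant polynomial $1$, so the $q$-string criterion of Theorem \ref{teo tipo A} for $\bs\omega$ reduces to the one for $\bs\omega_{I\setminus\{1\}}$, which holds by (2); this would make $V_q(\bs\omega)$ minimal, contradicting (1). Symmetrically for $n$. Next, suppose $\mu\in P^+$ satisfies $\mu<\lambda$ and $m_\mu(V_q(\bs\omega))>0$. If $\rupp(\lambda-\mu)\subseteq I\setminus\{i\}$ for some $i\in\partial I$, then $\mu\in\lambda-Q_{I\setminus\{i\}}^+$, and Lemma \ref{lema multi subdiagrama} combined with (2) yields $m_\mu(V_q(\bs\omega))=m_{\mu_{I\setminus\{i\}}}(V_q(\lambda_{I\setminus\{i\}}))$, which is nonzero only when $\mu_{I\setminus\{i\}}=\lambda_{I\setminus\{i\}}$; nondegeneracy of the Cartan submatrix of type $A$ then forces $\mu=\lambda$, a contradiction. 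Hence $\{1,n\}\subseteq\rupp(\lambda-\mu)$.

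By (1), the set $S=\{\mu\in P^+:\mu<\lambda,\ m_\mu(V_q(\bs\omega))>0\}$ is nonempty; pick $\mu^*\in S$ maximal and write $\lambda-\mu^*=\sum_j s_j\alpha_j$. The previous step gives $s_1,s_n\ge 1$. To see $s_j\ge 1$ for every $j$, suppose $s_{j_0}=0$ for some middle $j_0$. Applying Lemma \ref{lema multi subdiagrama} inside the minimal affinization $V_q(\bs\omega_{I\setminus\{1\}})$ with $J_1=[1,j_0-1]$ shows that $V_q(\bs\omega_{J_1})$ is itself a minimal affinization; then Lemma \ref{lema i0} with $i_0=j_0$ forces $s_j=0$ for every $j\in J_1$, contradicting $s_1\ge 1$.

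The remaining and most delicate step, which I expect to be the main obstacle, is to show $s_j=1$ for every $j$, so that $\mu^*=\lambda-\vartheta$. The plan is an argument by contradiction assuming $s_{j_0}\ge 2$ for some $j_0$. Conditions (1) and (2) together force the two signs $\epsilon_1,\epsilon_n$ attached by Theorem \ref{teo tipo A} to the $q$-strings of $\bs\omega_{I\setminus\{1\}}$ and $\bs\omega_{I\setminus\{n\}}$ to be opposite, since otherwise they would glue into a single $q$-string on all of $\bs\omega$ and render $V_q(\bs\omega)$ minimal. Realizing $V_q(\bs\omega)$ as the head of a suitably ordered tensor product $\bigotimes_i V_q(\bs\omega_{i,a_i,\lambda(h_i)})$, this sign mismatch localizes the failure of irreducibility to a single ``defect'' at the junction. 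An iterated Littlewood--Richardson analysis of the resulting $U_q(\lie g)$-decomposition (equivalently, a Frenkel--Mukhin qcharacter computation in type $A$) should then produce a maximal proper summand of highest weight exactly $\lambda-\vartheta$, contradicting $\mu^*<\lambda-\vartheta$ and yielding $m_{\lambda-\vartheta}(V_q(\bs\omega))>0$.
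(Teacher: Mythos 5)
The paper itself offers no proof of this proposition --- it is imported verbatim from \cite[Proposition 3.3]{cp:minsl} and marked \qedsymbol --- so your attempt can only be judged on its own terms, and there it has a genuine gap exactly where you flag one. Your Steps 1--3 are essentially correct (modulo the slip that $J_1=[1,j_0-1]$ sits inside $I\setminus\{n\}$, not $I\setminus\{1\}$, and that the minimality of $V_q(\bs\omega_{J_1})$ follows from Theorem \ref{teo tipo A} applied to a subinterval rather than from Lemma \ref{lema multi subdiagrama}). They show that every $\mu<\lambda$ with $m_\mu(V_q(\bs\omega))>0$ has $\rupp(\lambda-\mu)=I$, hence $\mu\le\lambda-\vartheta$. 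But this only bounds the maximal non-top constituent from above; the entire content of the proposition is the matching lower bound, namely that $V_q(\lambda-\vartheta)$ actually occurs, and this is precisely the step you leave as a plan (``should then produce'').

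Moreover, the plan as stated does not go through. The ``opposite signs'' dichotomy is unavailable in general: when $\#(\supp(\lambda)\cap\mathring I)\le 1$ the parameters $\epsilon$ attached by Theorem \ref{teo tipo A} to $\bs\omega_{I\setminus\{1\}}$ and $\bs\omega_{I\setminus\{n\}}$ are not pinned down by hypothesis (2) --- in the extreme case $\supp(\lambda)=\{1,n\}$ both restrictions are Kirillov--Reshetikhin polynomials and are minimal for \emph{every} choice of spectral parameters, so there is no ``junction defect'' to localize, yet hypothesis (1) can still hold and the conclusion must still be established. Even in the generic case, ``an iterated Littlewood--Richardson (or Frenkel--Mukhin) analysis should produce a summand of highest weight $\lambda-\vartheta$'' is the assertion to be proved, not an argument: $V_q(\bs\omega)$ is only the head of the tensor product $\bigotimes_i V_q(\bs\omega_{i,a_i,\lambda(h_i)})$, and one must rule out that the copy (or copies, when $\#\supp(\lambda)\ge 3$) of $V_q(\lambda-\vartheta)$ in that tensor product fall entirely into lower constituents. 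Doing this requires either a dimension count of $V_q(\bs\omega)_{\lambda-\vartheta}$ against the classical bounds of Proposition \ref{e:upbclassical} and \eqref{e:dimWnu}, in the style of Sections \ref{ss:tpinc}--\ref{ss:tpcoh}, or an explicit construction of a highest-weight vector of weight $\lambda-\vartheta$ outside $U_q(\lie g)v_{\bs\omega}$; neither is carried out, so the proof is incomplete.
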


\begin{lem}\label{l:many0} Suppose $\lie g$ is of type $D$ or $E$, let $\bs{\omega}\in\mathcal P_q^+$ be preminimal, $\lambda=\wt(\bs\omega)$, and  $V=V_q(\bs{\omega})$. Let $\mu\in P^+$ be such that
\begin{equation}\label{e:hiponmu}
\mu < \lambda \qquad\text{and}\qquad m_\mu(V)\ne 0.
\end{equation}	
Then, $J_\mu:= \rupp(\lambda-\mu)$ is connected and  $m_\mu(V)=m_{\mu_{J_\mu}}(V_q(\bs{\omega}_{J_\mu}))$.
Moreover, for each $k\in\partial I$ we have:
	\begin{enumerate}[(a)]
		\item If $m\notin J_\mu$ for some $m\in [i_*,k]$, then $[m,k]\cap J_\mu=\emptyset$. In particular, $i_*\in J_\mu$.
		\item There exists unique $j\in [i_*,k]$ such that  $(j,k]\cap J_\mu=\emptyset$ and $[j,i_*]\subseteq J_\mu$.   
		\item If $\bs{\omega}$ is $k$-minimal, then $j\ne i_*$.
	\end{enumerate}
\end{lem}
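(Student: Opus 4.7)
The plan is to establish (a) first via Lemma \ref{lema i0}, which will immediately give (b), the connectedness of $J_\mu$, and the multiplicity equality. Part (c) will then follow from a separate application of Lemma \ref{lema multi subdiagrama} combined with the irreducibility of $V_q(\bs\omega_{I_k})$ as a $U_q(\lie g_{I_k})$-module for $k$-minimal $\bs\omega$.

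For (a), fix $k\in\partial I$ and $m\in[i_*,k]$ with $m\notin J_\mu$. The case $m=k$ is trivial, so assume $m$ precedes $k$ on the path $[i_*,k]$. I intend to apply Lemma \ref{lema i0} with $i_0=m$, $J_1=(m,k]$, and $J_2=I\setminus[m,k]$. The key geometric check is that, since $\lie g$ is of type $D$ or $E$, $i_*$ is the unique node of degree greater than two in the Dynkin diagram. Consequently every node in $(m,k]$ has all of its Dynkin neighbors inside $[m,k]$, which yields $c_{j\ell}=0$ for $j\in J_1$, $\ell\in J_2$; by the same observation, any branch cut off by the removal of $(m,k]$ remains attached to the tree through $m$, so $J_2\cup\{m\}=I\setminus(m,k]$ is connected. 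Since $J_1$ is a connected subdiagram of type $A$ disjoint from $\{i_*\}$, the preminimality of $\bs\omega$ guarantees that $V_q(\bs\omega_{J_1})$ is a minimal affinization. Writing $\lambda-\mu=\sum_j s_j\alpha_j$, the hypothesis $m\notin J_\mu$ gives $s_m=0$, so Lemma \ref{lema i0} yields $s_j=0$ for $j\in(m,k]$, and together with $m\notin J_\mu$ this gives $[m,k]\cap J_\mu=\emptyset$. For the ``in particular'' clause, if $i_*\notin J_\mu$ then applying the above with $m=i_*$ for every extremal $k$ forces $J_\mu=\emptyset$ and hence $\mu=\lambda$, contradicting $\mu<\lambda$.

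Part (b) is now immediate: (a) together with $i_*\in J_\mu$ shows that $J_\mu\cap[i_*,k]$ is a non-empty initial segment $[i_*,j_k]$ of $[i_*,k]$, uniqueness of $j_k$ being clear. Since $J_\mu=\bigcup_{k\in\partial I}[i_*,j_k]$ is a union of connected subdiagrams all containing $i_*$, it is itself connected. Because $V$ is irreducible, $V_{J_\mu}=U_q(\tlie g_{J_\mu})v\cong V_q(\bs\omega_{J_\mu})$, and Lemma \ref{lema multi subdiagrama} then delivers the multiplicity equality $m_\mu(V)=m_{\mu_{J_\mu}}(V_q(\bs\omega_{J_\mu}))$.

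For (c), assume $\bs\omega$ is $k$-minimal but, contrary to the claim, $j_k=i_*$. Then $(i_*,k]\cap J_\mu=\emptyset$, so $J_\mu\subseteq I\setminus(i_*,k]=I_k$. Applying Lemma \ref{lema multi subdiagrama} with $J=I_k$ gives $m_\mu(V)=m_{\mu_{I_k}}(V_q(\bs\omega_{I_k}))$. By $k$-minimality, $V_q(\bs\omega_{I_k})$ is a minimal affinization in type $A$, hence (by Theorem \ref{teo tipo A} and the remarks that follow it) irreducible as a $U_q(\lie g_{I_k})$-module and isomorphic to $V_q(\lambda_{I_k})$. Positivity of $m_\mu(V)$ thus forces $\mu_{I_k}=\lambda_{I_k}$. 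Since $s_i=0$ for $i\notin I_k$, this amounts to $C_{I_k}\cdot s^{(I_k)}=0$, and the invertibility of the type $A$ Cartan matrix $C_{I_k}$ forces $s=0$, i.e., $\mu=\lambda$, the desired contradiction. The only nontrivial step in the whole plan is the geometric verification of the hypotheses of Lemma \ref{lema i0}, which rests entirely on the fact that $i_*$ is the unique branch point of the Dynkin diagram in types $D$ and $E$.
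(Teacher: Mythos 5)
Your proposal is correct and follows essentially the same route as the paper: part (a) via Lemma \ref{lema i0} with $i_0=m$, $J_1=(m,k]$, $J_2=I\setminus[m,k]$, connectedness and the multiplicity identity from (a)--(b) plus Lemma \ref{lema multi subdiagrama}, and part (c) from the irreducibility of a type $A$ minimal affinization as a $U_q(\lie g_{I_k})$-module. Your argument for (c) is just the contrapositive form of the paper's (the paper directly notes $m_{\mu_{I_k}}(V_q(\bs\omega_{I_k}))=0$ since $\mu_{I_k}<\lambda_{I_k}$), so there is no substantive difference.
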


\begin{proof}
	Assuming parts (a) and (b), the first two claims of the lemma can be proved as follows. Let $j_k$ be defined as in (b) for each $k\in\partial I$. It is clear from (a) and (b) that $J_\mu = \overline{\{j_k:k\in\partial I\}}$ showing that it is connected. The second claim of the lemma is then immediate from Lemma \ref{lema multi subdiagrama}.
	
	The first claim in part (a) follows from a straightforward application of Lemma \ref{lema i0} with $i_0=m, J_1= (m,k]$, and $J_2=I\setminus [m,k]$.
	For the second, note that, since $i_*\in [i_*,m]$ for all $m\in\partial I$, if we had $i_*\notin J_\mu$, it would follow that $J_\mu=\emptyset$, contradicting the first assumption in \eqref{e:hiponmu}. 	
	For part (b), let $j$ be the element of $J_\mu\cap [i_*,k]$ which is closest to $k$.  Then, part (a) implies that $j$ satisfies the desired properties. 
	
	To prove (c), note that, if $j=i_*$, we would have $(i_*,k]\cap J_\mu=\emptyset$ and, hence, $\mu\in\lambda-Q^+_{I_k}$. Since $I_k$ is of type $A$ and $\mu<\lambda$, we would have $m_{\mu_{J_{k}}}(V_q(\bs\omega_{I_k}))=0$. On the other hand, Lemma \ref{lema multi subdiagrama} would imply that $m_\mu(V)=m_{\mu_{J_{k}}}(V_q(\bs\omega_{I_k}))$, contradicting the second assumption from \eqref{e:hiponmu}.  
\end{proof}

We can now give a proof of Lemma \ref{l:<nuk} under the assumption that hypothesis (ii) is satisfied. 
Recalling the notation there, we have 
\begin{equation*}
V_q(\bs\omega_{I^\lambda_m}) \quad\text{is a minimal affinization for}\quad m\ne k.
\end{equation*}
Defining $j_m, m\in\partial I$, as in Lemma \ref{l:many0}, it follows that $j_m\ne i_*$ for $m\in\partial I_k$. Hypothesis (ii) implies that $I_k^\lambda\subseteq J_\mu$ and, hence, $\mu\le\nu_k$.

\subsection{qCharacters}\label{ss:qchar}
Let $\Z[\cal P]$ be the integral group ring over $\cal P$. Given $\chi\in \Z[\cal P]$, say 
$$\chi = \sum_{\bs\mu\in\cal P}\chi(\bs\mu)\, \bs\mu,$$ 
we identify it with the function $\cal P\to\mathbb Z, \bs\mu\to \chi(\bs\mu)$. Conversely, any function
$\cal P\to\mathbb Z$ with finite support can be identified with an
element of $\Z[\cal P]$. The qcharacter of $V\in \wcal C_q$ is the
element $\qch(V)$ corresponding to the function
\begin{equation*}
\bs\mu\mapsto \dim(V_{\bs\mu}).
\end{equation*}
We set
\begin{equation*}
\wt_\ell(V) =  \{\bs\mu\in\cal P_q:V_{\bs\mu}\ne 0\} \quad\text{and}\quad \wt_\ell(V_\mu) =  \{\bs\mu\in\wt_\ell(V):\wt(\bs{\mu})=\mu\},
\end{equation*}
for all $\mu\in P$.

The Frenkel-Mukhin algorithm \cite{fremuk:qchar} is one of the main tools for computing qcharacters of simple objects of $\widehat{\mathcal C}_q$, although it is not applicable to any such object. From the basic theory leading to the algorithm, we will only need the following result here (a proof can also be found in \cite{cm:qblock}).

\begin{lem}\label{l:qcbasic}
	Let $V\in\widehat{\mathcal C}_q, i\in I$, and $\bs{\varpi}\in\mathcal P_q$. Suppose there exists $v\in V_{\bs{\varpi}}\setminus\{0\}$ satisfying $x_{i,r}^+v=0$ for all $r\in\mathbb Z$ and that $\bs{\varpi}^{\{i\}} = \bs{\omega}_{i,a,m}$ for some $a\in\mathbb F^\times, m>0$. Then,
	\begin{equation}\label{e:qcbasic}
	\bs{\varpi}\bs{\alpha}_{i,aq^{m-1}}^{-1}\in \wtl(V).
	\end{equation}\hfill\qed
\end{lem}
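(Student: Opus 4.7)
The plan is to reduce the claim to a $U_q(\tlie{sl}_2)$ computation via the diagram subalgebra at node $i$. Set $W = U_q(\tlie g_{\{i\}}) v$. Since $v$ is an $\ell$-weight vector annihilated by every $x_{i,r}^+$, the submodule $W$ is a highest-$\ell$-weight module for $U_q(\tlie g_{\{i\}}) \cong U_q(\tlie{sl}_2)$ with highest $\ell$-weight $\pi_{\{i\}}(\bs{\varpi}) = \bs{\omega}_{i,a,m}$. Hence $W$ surjects onto the Kirillov-Reshetikhin module $V_q(\bs{\omega}_{i,a,m})$ for $U_q(\tlie{sl}_2)$, whose qcharacter is classically known to contain $\bs{\omega}_{i,a,m} \bs{\alpha}_{i,aq^{m-1}}^{-1}$ (with $\bs{\alpha}$ interpreted in the $U_q(\tlie{sl}_2)$ sense) with multiplicity one; this is the unique $\ell$-weight lying immediately below the top in the $\mathfrak{sl}_2$-string.

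Lifting through the surjection, there is a nonzero vector $w \in W \subseteq V$ that is a $U_q(\tlie h_{\{i\}})$-$\ell$-weight vector whose $i$-component is $\bs{\omega}_{i,a,m} \bs{\alpha}_{i,aq^{m-1}}^{-1}$. It remains to compute the action of $h_{j,s}$ for $j \neq i$ on an appropriate representative surviving in the KR quotient. For $j$ with $c_{j,i}=0$, the Drinfeld relation $[h_{j,s}, x_{i,r}^\pm]=0$ shows that $h_{j,s}$ commutes with $U_q(\tlie g_{\{i\}})$, so its eigenvalue on $w$ agrees with that on $v$, which is consistent with $(\bs{\alpha}_{i,b})_j = 1$ in this case. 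For $j$ adjacent to $i$, the commutation relation between the Drinfeld series $\phi_j^\pm(z)$ and $x_i^-(u)$ conjugates $\phi_j^\pm(z)$ by a rational factor whose generating function is exactly the $j$-th component of $\bs{\alpha}_{i,b}^{-1}$ for the spectral parameter $b$ at which $x_i^-$ was applied; accumulating these factors along the way $w$ is produced from $v$ yields the $j$-component $\bs{\varpi}_j \cdot (\bs{\alpha}_{i,aq^{m-1}}^{-1})_j$, as desired.

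The main technical point is the last step, since $w$ need not be realized as a single monomial in the generators $x_{i,r}^-$ applied to $v$. This is handled by observing that the subspace of $W$ carrying $U_q(\tlie h_{\{i\}})$-$\ell$-weight $\bs{\omega}_{i,a,m}\bs{\alpha}_{i,aq^{m-1}}^{-1}$ is stable under the full Drinfeld-Cartan algebra $U_q(\tlie h)$ (because $\phi_j^\pm(z)$ commutes with $\phi_i^\pm(u)$) and projects nontrivially onto the corresponding one-dimensional $\ell$-weight space of the KR quotient. Decomposing this subspace into $U_q(\tlie h)$-generalized eigenspaces, at least one summand maps nontrivially to the quotient, and the commutation computation of the previous paragraph pins down its eigenvalue as $\bs{\varpi}\bs{\alpha}_{i,aq^{m-1}}^{-1}$, which therefore lies in $\wtl(V)$.
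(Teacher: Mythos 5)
The paper gives no proof of this lemma (it is quoted from Frenkel--Mukhin and Chari--Moura), and your argument is essentially the standard one from those references: restrict to the copy of $U_q(\tlie{sl}_2)$ at node $i$, read off the second $\ell$-weight $\bs{\omega}_{i,a,m}\bs{\alpha}_{i,aq^{m-1}}^{-1}$ of the resulting highest-$\ell$-weight $U_q(\tlie{sl}_2)$-module, and lift it to a full $\ell$-weight of $V$ via the fact that $x_{i,r}^-$ maps $V_{\bs{\mu}}$ into $\bigoplus_b V_{\bs{\mu}\bs{\alpha}_{i,b}^{-1}}$, which is exactly what your $\phi_j^\pm$-versus-$x_i^-$ commutation computation encodes. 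The argument is correct; the only cosmetic point is that $W=U_q(\tlie g_{\{i\}})v$ need not itself be $U_q(\tlie h)$-stable, so the generalized-eigenspace decomposition in your final paragraph should be performed inside $V$ (or inside $U_q(\tlie g_{\{i\}})U_q(\tlie h)v$) rather than inside $W$, after which one concludes by noting that $b\mapsto\pi_{\{i\}}(\bs{\alpha}_{i,b})$ is injective, so the component with the prescribed $i$-th entry must lie in $V_{\bs{\varpi}\bs{\alpha}_{i,aq^{m-1}}^{-1}}$.
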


\subsection{Tensor Products}\label{ss:tp}

The algebra $U_q(\tlie g)$ is a Hopf algebra.  We now review the facts about tensor products of objects from $\widehat{\mathcal C_q}$ that we need. 

It is well-known that the tensor product of weight vectors is a weight vector and, hence, if $V,W\in\mathcal C_q$, we have $\ch(V\otimes
W)=\ch(V)\ch(W)$. Although the tensor product of $\ell$-weight vectors is not an $\ell$-weight vector in general, it was proved in  \cite{freres:qchar} (see also \cite{cm:qblock}) that we still have 
\begin{equation}\label{e:qctp}
\qch(V\otimes W)=\qch(V)\qch(W) \qquad\text{for every}\qquad V,W\in\wcal C_q.
\end{equation}

It turns out that tensor products of nontrivial simple objects from $\wcal C_q$ may be simple as well. 
For the proof of Theorem \ref{t:cohnotmin}, we will need some sufficient criteria for the irreducibility of certain tensor products of minimal affinizations which we now recall. The following is the first half of main result of \cite{hmp:tpa}.

\begin{thm}\label{t:tpa}
	Let $\lie g$ be of type $A_n, \lambda\in P^+\setminus\{0\}$, and consider
	\begin{equation*}
	\bs\pi= \prod_{i\le j} \bs\omega_{i,aq^{-p_{i,j}(\lambda)},\lambda(h_i)} \qquad\text{and}\qquad \bs{\pi}' = \bs\omega_{n,b,\eta}
	\end{equation*}
	for some $a,b\in\mathbb F^\times, \eta\in\mathbb Z_{>0}$ where $j=\max\{i\in I:i\in\supp(\lambda)\}$. Then,  $V_q(\bs{\pi})\otimes V_q(\bs{\pi}')$ is reducible if and only if there exist  $s\in\mathbb Z, j'\in\supp(\lambda)$, and $\eta'\in\mathbb Z_{>0}$ such that $b=aq^s$ and either one of the following options hold:
	\begin{enumerate}[(i)]
		\item $\eta'\le\min\{\lambda(h_{j'}),\eta\}$ and $s+\eta+n-j'+2 = -p_{j',j}(\lambda)-\lambda(h_{j'})+2\eta'$;
		\item $\eta'\le\min\{|\lambda|,\eta\}$ and $\lambda(h_j)+n-j+2 = s-\eta+2\eta'$.
		\hfill\qed
	\end{enumerate}
\end{thm}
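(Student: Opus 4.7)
The plan is to detect irreducibility of $V_q(\bs{\pi})\otimes V_q(\bs{\pi}')$ by analyzing cyclicity in both orderings and, when cyclicity fails, locating the extra highest-$\ell$-weight vector explicitly via qcharacter data.

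First, I would establish that $V_q(\bs{\pi})\otimes V_q(\bs{\pi}')$ is always a highest-$\ell$-weight module of highest $\ell$-weight $\bs{\pi}\bs{\pi}'$. This follows from Chari's cyclicity principle for type $A$ once one observes that the hypothesis $\bs\pi=\prod_{i\le j}\bs\omega_{i,aq^{-p_{i,j}(\lambda)},\lambda(h_i)}$ arranges the $q$-string centers of $\bs\pi$ in the monotonic order compatible with tensoring on the left of $\bs\pi'=\bs\omega_{n,b,\eta}$ without encountering an R-matrix pole. A standard consequence is that $V_q(\bs{\pi})\otimes V_q(\bs{\pi}')$ is irreducible if and only if the reversed product $V_q(\bs{\pi}')\otimes V_q(\bs{\pi})$ is also highest-$\ell$-weight; equivalently, reducibility occurs exactly when one can produce inside the tensor product an $\ell$-weight vector whose $\ell$-weight is strictly below $\bs{\pi}\bs{\pi}'$ and which is annihilated by every $x_{i,r}^+$.

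Next, I would compute the qcharacters of the two factors and analyze the possible cancellations in their product. The qcharacter of $V_q(\bs{\omega}_{n,b,\eta})$ is explicit in type $A$ via Jimbo's evaluation homomorphism: each $\ell$-weight is a descending chain $\bs{\omega}_{n,b,\eta}\prod_{i=k}^{n}\bs{\alpha}_{i,c_i}^{-1}$ whose centers $c_i$ march down the diagram from node $n$ to some node $k$. Multiplying such an $\ell$-weight by the dominant $\ell$-weight of $\bs{\pi}$ and using Lemma \ref{l:qcbasic} iteratively to verify that the surviving monomial is a genuine $\ell$-weight of the tensor product, one sees that a new highest-$\ell$-weight vector appears precisely when one of two geometric scenarios is realized: the descending chain terminates at an interior support node $j'\in\supp(\lambda)$, producing case (i); or it overshoots the last support node $j$ and bunches up against it, producing case (ii). In each scenario the parameter $\eta'$ measures the depth of the cancellation and obeys the stated upper bounds.

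The remaining step is to match the specialization $b=aq^s$ with the center-matching equations. The value $s+\eta+n-j'+2$ arises by starting the KR chain at spectral parameter $bq^{\eta-1}$ and descending $n-j'+1$ nodes, each step contributing a factor of $q^2$ from $\bs{\alpha}_{i,\cdot}$; the right-hand side $-p_{j',j}(\lambda)-\lambda(h_{j'})+2\eta'$ records the center of the $q$-string at node $j'$ inside $\bs{\pi}$, shifted by $\eta'$ cancellations. Case (ii) is analogous with the chain escaping to node $j$. I expect the main obstacle to be the converse direction, namely showing that if $b=aq^s$ satisfies neither (i) nor (ii) then no cancellation can produce a highest-$\ell$-weight vector. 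The natural approach is an induction on the depth $\eta'$, using the strict monotonicity of $i\mapsto -p_{i,j}(\lambda)$ inherent in the decreasing-minimal-affinization structure to exclude accidental cancellations; this forces any candidate $\ell$-weight not satisfying (i) or (ii) to be killed by some $x_{i,r}^+$, ruling out its being highest-$\ell$-weight and completing the equivalence.
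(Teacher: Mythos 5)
This theorem is not proved in the present paper: it is quoted from the first paper of the series \cite{hmp:tpa} and closed with \qedsymbol\ to indicate that its proof is omitted, so there is no in-paper argument to compare yours against. Judged on its own terms, your proposal is a reasonable high-level roadmap of the kind of argument one expects (cyclicity criteria plus qcharacter bookkeeping), but it contains a genuine gap at the very first step. You assert that $V_q(\bs{\pi})\otimes V_q(\bs{\pi}')$ is \emph{always} a highest-$\ell$-weight module, ``without encountering an R-matrix pole''. This cannot be right as stated: Chari-type cyclicity conditions are inequalities on the ratio $b/a$ (they forbid finitely many powers of $q$, with the forbidden set depending on the order of the two factors), and $b$ is arbitrary in the hypotheses, so for suitable $s$ with $b=aq^{s}$ the ordering $V_q(\bs{\pi})\otimes V_q(\bs{\pi}')$ itself fails to be highest-$\ell$-weight. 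The two families (i) and (ii) in the statement, which constrain $s$ in different ranges, reflect precisely the failure of cyclicity in the two different orderings. Everything downstream of your claim --- in particular the reduction of reducibility to the existence of a second singular vector inside this one fixed ordering --- has to be reworked into the symmetric statement that the product is irreducible if and only if \emph{both} orderings are highest-$\ell$-weight.

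Beyond that, the substantive work is left unexecuted. The description of the $\ell$-weights of $V_q(\bs{\omega}_{n,b,\eta})$ as single descending chains is only correct for $\eta=1$; for $\eta>1$ they are products of $\eta$ such chains and the cancellation analysis is correspondingly more involved. More importantly, both directions of the equivalence are asserted rather than proved: the forward direction requires actually exhibiting a proper submodule (or a second highest-$\ell$-weight vector, or an outer-multiplicity computation) under each of (i) and (ii), and the converse requires showing that sufficient conditions for cyclicity are, in this specific situation, also necessary --- which is exactly the delicate part and the content of the analysis in \cite{hmp:tpa}. As it stands the proposal is a plausible outline, not a proof.
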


\begin{rem}\label{r:tpa}
	Note that $V_q(\bs{\pi})$, with $\bs{\pi}$ as in Theorem \ref{t:tpa}, is an increasing minimal affinization. Similar results for decreasing minimal affinizations as well as for tensor products with KR modules associated to the first node can be obtained from   Theorem \ref{t:tpa} by means of duality arguments. The precise statements can be found in \cite[Corollary 4.2.2]{hmp:tpa}. The second half of Theorem \ref{t:tpa} states that when such tensor products are reducible, they are length-two modules and the Drinfeld polynomial of the irreducible factor with lower highest-weight is explicitly described.\endd
\end{rem}

We will also need a criterion that guarantees the irreducibility of tensor products of KR modules associated to nodes in $\partial I$ when $\lie g$ is of type $D$. To deduce it, we begin by recalling some facts about duality (a slightly more complete review was given in \cite[Section 4.1]{hmp:tpa}). 
For any two finite-dimensional $U_q(\tlie g)$-modules $V$ and $W$, we have
\begin{equation}\label{e:tpdual}
(V\otimes W)^*\cong W^*\otimes V^*.
\end{equation}
Also, given $\bs\omega\in\cal P^+$, we have
 \begin{equation}\label{e:drhw}
 V_q(\bs\omega)^*\cong V_q(\bs\omega^*) \qquad\text{where}\qquad \bs\omega^*_i(u) = \bs\omega_{w_0\cdot i}(q^{-h^\vee}u).
 \end{equation}
 Here,  $h^\vee$ is the dual Coxeter number of $\lie g$, $w_0$ is the longest element of $\mathcal W$ and $w_0\cdot i =j$  if and only if $w_0\omega_i=-\omega_j$. 
The following lemma is well-known and easily established.

\begin{lem}\label{l:irrbyhw}
	Suppose $V$ is an object from $\wcal C_q$. Then, $V$ is simple if and only if both $V$ and $V^*$ are highest-$\ell$-weight modules.\hfill\qed
\end{lem}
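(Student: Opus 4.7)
The forward direction is immediate from the structure theory already recorded. If $V$ is simple then Theorem \ref{t:weyl} gives $V\cong V_q(\bs{\omega})$ for some $\bs{\omega}\in\mathcal P_q^+$, which is highest-$\ell$-weight by construction; by \eqref{e:drhw}, $V^*\cong V_q(\bs{\omega}^*)$ is simple as well, hence also highest-$\ell$-weight.

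For the converse, the plan is to use the weight data in $V$ (obtained from the highest-$\ell$-weight hypothesis and Theorem \ref{t:ciuqg}(a)) to produce a simple submodule of $V$ containing the generating highest-$\ell$-weight vector. Let $v_0\in V$ be a highest-$\ell$-weight vector generating $V$, and let $\lambda=\wt(v_0)$. By the quantum analogue of Proposition \ref{p:genbycurrent}, $V = U_q(\tlie n^-)v_0$, so all weights of $V$ lie in $\lambda-Q^+$ and $V_\lambda = \mathbb F v_0$ is one-dimensional. Applying Theorem \ref{t:ciuqg}(a), the set of weights of $V$ is $\mathcal W$-invariant, hence every weight $\mu$ of $V$ satisfies $w_0\lambda\le\mu\le\lambda$, and $\dim V_{w_0\lambda}=\dim V_\lambda=1$. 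Dualizing weights, $V^*$ has highest weight $-w_0\lambda$ with a one-dimensional top weight space.

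Now I would invoke the hypothesis that $V^*$ is a highest-$\ell$-weight module. Any highest-$\ell$-weight module admits a unique simple quotient (being a quotient of the local Weyl module attached to its highest $\ell$-weight, which has a unique simple quotient by the discussion preceding Theorem \ref{t:weyl}). Hence $V^*$ surjects onto some $V_q(\bs{\omega}')$ with $\wt(\bs{\omega}')=-w_0\lambda$. Dualizing this surjection and using $V^{**}\cong V$ together with \eqref{e:drhw} yields an embedding
\begin{equation*}
V_q(\bs{\omega}'^{\,*})\hookrightarrow V
\end{equation*}
of a simple submodule whose classical highest weight is $-w_0\wt(\bs{\omega}')=\lambda$.

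The final step is to observe that this simple submodule must contain $v_0$, and therefore equals $V$. Indeed, $V_q(\bs{\omega}'^{\,*})_\lambda\ne 0$, so it is a nonzero subspace of the one-dimensional space $V_\lambda=\mathbb F v_0$; hence it equals $\mathbb F v_0$ and $v_0\in V_q(\bs{\omega}'^{\,*})$. Since $V$ is generated by $v_0$ as a $U_q(\tlie g)$-module, we conclude $V=V_q(\bs{\omega}'^{\,*})$, which is simple. No real obstacle appears; the only point requiring care is the translation between weight multiplicities of $V$ and $V^*$ and the use of $\mathcal W$-invariance to pin down the one-dimensionality of the extremal weight spaces, which is what makes the embedding forced to hit the generator.
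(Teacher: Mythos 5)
The paper offers no proof of this lemma (it is cited as well known), so there is nothing to compare against; judged on its own, your argument is sound and is the standard one: a highest-$\ell$-weight structure on $V$ forces $\dim V_\lambda=\dim V_{w_0\lambda}=1$, a highest-$\ell$-weight structure on $V^*$ then produces a simple submodule of $V$ whose $\lambda$-weight space is nonzero, and that submodule must contain the generator $v_0$, hence equal $V$. The one step you should tighten is the appeal to $V^{**}\cong V$. For the quantum \emph{affine} algebra the square of the antipode is not inner, and $V^{**}$ is isomorphic to $V$ only after pulling back by a spectral-parameter shift (as $U_q(\lie g)$-modules they agree, but not as $U_q(\tlie g)$-modules); so the submodule you produce is not literally $V_q(\bs{\omega}'^{\,*})$ sitting inside $V$. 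This does not damage the argument, because all you use about that submodule is that it is simple and has nonzero $\lambda$-weight space, and twisting by an algebra automorphism that is the identity on $U_q(\lie g)$ preserves both properties. Alternatively, you can avoid the double dual entirely by using the right dual ${}^*(-)$, for which ${}^*(V^*)\cong V$ canonically, or by arguing purely with the sub/quotient correspondence: a nonzero proper submodule $N\subseteq V$ gives the proper nonzero submodule $(V/N)^{\perp\text{-complement}}=(V/N)^*\subsetneq V^*$, which must avoid the one-dimensional top weight space $(V^*)_{-w_0\lambda}$ because $V^*$ is highest-$\ell$-weight; but $V/N$ is the simple quotient of highest weight $\lambda$, so $\dim(V/N)_{w_0\lambda}=1$ by Weyl-group invariance, a contradiction. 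Either repair leaves your proof complete.
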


The following is a rewriting of part of \cite[Corollary 6.2]{cha:braid}.

\begin{prop}
	Suppose $\lie g$ is of type $D_n$, $i,j\in\partial I$ be distinct $m_i,m_j\in\mathbb Z_{>0}, a_i,a_j\in\mathbb F^\times$, and let $V= V_q(\bs\omega_{i,a_i,m_i})\otimes V_q(\bs\omega_{j,a_j,m_j})$ and $m=\min\{m_i,m_j\}$. The following are sufficient conditions for $V$ to be a highest-$\ell$-weight module:
	\begin{enumerate}[(a)]
		\item $\frac{a_j}{a_i}\ne q^{m_i+m_j+2(2s-p)}$ for all $1\le p\le m,\ 1\le s\le \lfloor\frac{n-1}{2}\rfloor $ if both $i$ and $j$ are spin nodes.
		\item $\frac{a_j}{a_i}\ne q^{m_i+m_j+n-2p}$ for all $1\le p\le m$ if either $i$ or $j$ is not a spin node.	\hfill\qed
	\end{enumerate}
\end{prop}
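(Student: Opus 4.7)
The statement is a translation of Chari's \cite[Corollary 6.2]{cha:braid}, so the plan is to invoke that result and align conventions.

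For finite-dimensional $U_q(\tlie g)$-modules, saying $V_q(\bs{\pi})\otimes V_q(\bs{\pi}')$ is a highest-$\ell$-weight module is the same as saying that the tensor product $v_{\bs{\pi}}\otimes v_{\bs{\pi}'}$ of highest-$\ell$-weight vectors is cyclic. In \cite{cha:braid} Chari uses the braid group action on $U_q(\tlie g)$ to attach to each Kirillov-Reshetikhin module $V_q(\bs{\omega}_{i,a,m})$ a distinguished basis of extremal $\ell$-weight vectors, and from this computes explicitly, for any two such modules, the finite set of spectral-parameter ratios $a_j/a_i$ at which $v_i\otimes v_j$ fails to be cyclic. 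Outside this set, the tensor product is a highest-$\ell$-weight module. So the task reduces to checking that, for $\lie g$ of type $D_n$ and $i,j\in\partial I$ distinct, the forbidden set is exactly the complement of the conditions (a) and (b).

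The Dynkin diagram of $D_n$ has three extremal nodes: one tail end and two spin nodes at the fork. If both $i$ and $j$ are spin nodes, the braid orbit analysis produces a two-parameter family of poles: the `short' index $1\le p\le m$ running over levels of the smaller KR module, and a second index $1\le s\le \lfloor (n-1)/2\rfloor$ counting steps along the length-$(n-2)$ body of $D_n$. This is case (a). If at least one of $i,j$ is the tail node, the second parameter collapses and a single family indexed by $p$ remains, which is case (b).

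The only real work is arithmetic bookkeeping, namely matching the centering of Chari's $q$-strings with our convention $\bs{\omega}_{i,a,m}=\prod_{k=0}^{m-1}\bs{\omega}_{i,aq^{m-1-2k}}$ together with the shifts coming from the duality \eqref{e:drhw}. Once this is done, her pole exponents become exactly $m_i+m_j+2(2s-p)$ in case (a) and $m_i+m_j+n-2p$ in case (b). No further ingredient is needed beyond \cite[Corollary 6.2]{cha:braid}, and the cyclicity conclusion is preserved under this repackaging of parameters.
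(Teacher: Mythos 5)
Your overall strategy coincides with the paper's: the proposition is stated there precisely as "a rewriting of part of \cite[Corollary 6.2]{cha:braid}," with no argument beyond the citation and the bookkeeping of conventions that you describe. However, there is one concrete point where "translate Chari's corollary exactly" does not suffice and which you would have to address. As published, \cite[Corollary 6.2]{cha:braid} states the range of the second parameter in the spin--spin case as $0\le s\le \lfloor\tfrac{n-1}{2}\rfloor$, not $1\le s\le \lfloor\tfrac{n-1}{2}\rfloor$. A literal translation would therefore force you to also exclude the ratios $\frac{a_j}{a_i}=q^{m_i+m_j-2p}$, $1\le p\le m$ (the $s=0$ family), yielding a strictly weaker sufficient condition than the one asserted in part (a). The proposition as stated only excludes the ratios with $s\ge 1$, so your claim that the pole exponents "become exactly $m_i+m_j+2(2s-p)$ with $1\le s$" does not follow from the published statement alone: one must recheck the computations behind Chari's corollary and verify that the $s=0$ obstructions are spurious (a typo), which is exactly what the authors do in the remark following the proposition. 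They note that dropping $s=0$ is also what makes part (a) consistent with part (b) in type $D_4$, where all three extremal nodes are spin nodes. This discrepancy is not cosmetic for the paper: the absence of the $s=0$ exclusions is explicitly used later (in the proof of Theorem \ref{t:cohnotmin}), so a proof that only reproduces the published form of the corollary would not establish the proposition in the strength required.
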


\begin{rem}
	There is a typo in \cite[Corollary 6.2]{cha:braid} regarding part (a) of the above proposition. Namely, the range for the parameter $s$ is claimed to be $0\le s\le \lfloor\frac{n-1}{2}\rfloor$. The absence of the possibility $s=0$ is crucial for our purposes. We have rechecked the computations related to the proof of  \cite[Corollary 6.2]{cha:braid} and have established that indeed $s=0$ can be removed from the range. Note that this correction is compatible with part (b) of the proposition in the sense that, in type $D_4$, since all elements of $\partial I$ ``are spin nodes'', part (a) should ``coincide'' with (b). If $s=0$ were allowed, the number of obstructions coming from (a) would be twice as many as from part (b). With this correction, parts (a) and (b) coincide in all elements of $\partial I$ for type $D_4$.\endd 
\end{rem}

Recall that, if $\lie g$ is of type $D$, then
\begin{equation}
w_0\cdot i = i\ \text{ if $i$ is not a spin node}.
\end{equation}
In particular, if $i$ is a spin node, so is $w_0\cdot i$.
Then, combining the last proposition with \eqref{e:tpdual}, \eqref{e:drhw}, and Lemma \ref{l:irrbyhw}, one easily establishes:

\begin{cor}\label{c:tpd}
	Suppose $\lie g$ is of type $D_n$, $i,j\in\partial I$ be distinct $m_i,m_j\in\mathbb Z_{>0}, a_i,a_j\in\mathbb F^\times$, and let $V= V_q(\bs\omega_{i,a_i,m_i})\otimes V_q(\bs\omega_{j,a_j,m_j})$ and $m=\min\{m_i,m_j\}$. The following are sufficient conditions for $V$ to be irreducible.
	\begin{enumerate}[(a)]
		\item $(\frac{a_j}{a_i})^{\pm 1}\ne q^{m_i+m_j+2(2s-p)}$ for all $1\le p\le m,\ 1\le s\le \lfloor\frac{n-1}{2}\rfloor $ if both $i$ and $j$ are spin nodes.
		\item $(\frac{a_j}{a_i})^{\pm 1}\ne q^{m_i+m_j+n-2p}$ for all $1\le p\le m$ if either $i$ or $j$ is not a spin node.\hfill\qed
	\end{enumerate}
\end{cor}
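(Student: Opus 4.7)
The plan is to combine the preceding proposition with a duality argument, so the proof should be brief. First I would invoke Lemma \ref{l:irrbyhw}, which reduces irreducibility of $V$ to the condition that both $V$ and its dual $V^*$ be highest-$\ell$-weight modules. The preceding proposition applied directly to $V$ gives, in case (a) or case (b), the inequality $a_j/a_i \ne q^{m_i+m_j+2(2s-p)}$ or $a_j/a_i\ne q^{m_i+m_j+n-2p}$ respectively (the ``$+1$'' exponent). So the remaining step is to compute $V^*$ and show that the analogous condition ensuring $V^*$ is highest-$\ell$-weight corresponds exactly to the ``$-1$'' exponent.

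Using \eqref{e:tpdual} and \eqref{e:drhw}, I would compute
\[
V^*\cong V_q(\bs\omega_{j,a_j,m_j})^*\otimes V_q(\bs\omega_{i,a_i,m_i})^* \cong V_q(\bs\omega_{w_0\cdot j,\, a_j q^{-h^\vee},\, m_j})\otimes V_q(\bs\omega_{w_0\cdot i,\, a_i q^{-h^\vee},\, m_i}).
\]
In type $D_n$, the longest element $w_0$ preserves $\partial I$: it fixes any non-spin extremal node, and either fixes or swaps the two spin nodes depending on the parity of $n$ (for $n$ even, $w_0=-1$ so $w_0\cdot k=k$ for all $k$; for $n$ odd, $w_0$ fixes non-spin nodes and permutes the two spin nodes). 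In particular, both $w_0\cdot i$ and $w_0\cdot j$ lie in $\partial I$ and retain the spin/non-spin types of $i$ and $j$ respectively. Hence the preceding proposition applies to $V^*$ in the same case (a) or (b) as for $V$.

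To conclude, I would observe that the ratio of spectral parameters in $V^*$ is $(a_i q^{-h^\vee})/(a_j q^{-h^\vee}) = (a_j/a_i)^{-1}$, the minimum $\min\{m_j,m_i\}=m$ is unchanged, and the forbidden values on the right-hand side of the proposition's inequalities are symmetric in $m_i,m_j$. Applying the proposition to $V^*$ therefore yields precisely $(a_j/a_i)^{-1}\ne q^{m_i+m_j+\cdots}$, i.e., the ``$-1$'' exponent. Combining the two conditions gives the ``$\pm 1$'' in the statement, and Lemma \ref{l:irrbyhw} closes the argument. No substantive obstacle is expected; the only point worth verifying carefully is the stability of the spin/non-spin dichotomy under $w_0$, which is standard in type $D$.
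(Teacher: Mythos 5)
Your proposal is correct and follows exactly the route the paper indicates: it derives the corollary by applying the preceding proposition to both $V$ and $V^*$ (computed via \eqref{e:tpdual} and \eqref{e:drhw}, using that $w_0$ preserves the spin/non-spin dichotomy in type $D$) and then invoking Lemma \ref{l:irrbyhw}. The paper itself only sketches this combination, and your write-up supplies the same argument with the relevant details (inversion of the spectral ratio, symmetry of the forbidden exponents in $m_i,m_j$) checked correctly.
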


\begin{rem}
	Using the combinatorics of qcharacters in terms of tableaux, a necessary and sufficient condition in the context of part (a) of the above Corollary was obtained in \cite{fer:thesis}. Moreover,  in the case that $V$ is reducible, an explicit description of the Drinfeld polynomial of its irreducible factor whose highest weight is the second highest was also obtained. Comments about the difference between the sufficient condition given by Corollary \ref{c:tpd} and the necessary and sufficient one obtained in \cite{fer:thesis} will appear in \cite{hmp:tpd}.  For the moment, it suffices to say that $s=0$ (see previous remark) indeed corresponds to an irreducible tensor product according to \cite{fer:thesis}. \endd
\end{rem}

\subsection{Classical and Graded Limits}\label{ss:classlim} Let $\mathbb A=\mathbb C[q,q^{-1}]\subseteq\mathbb F$ and let $U_{\mathbb A}(\tlie g)$ be the $\mathbb A$-subalgebra of $U_q(\tlie g)$ generated by the elements $(x_{i,r}^\pm)^{(k)}, k_i^{\pm 1}$ for $i\in I,r\in\mathbb Z$, and $k\in\mathbb Z_{\ge 0}$ where  $(x_{i,r}^\pm)^{(k)} = \frac{(x_{i,r}^\pm)^k}{[k]!}$. Define $U_{\mathbb A}(\lie g)$ similarly and notice that $U_{\mathbb A}(\lie g)=U_{\mathbb A}(\tlie g)\cap U_q(\lie g)$ . For the proof of the next proposition see \cite[Lemma 2.1]{cha:fer} and the locally cited references.

\begin{prop}
	We have $U_q(\tlie g)=\mathbb F\otimes_{\mathbb A} U_{\mathbb A}(\tlie g)$ and $U_q(\lie g)=\mathbb F\otimes_{\mathbb A} U_{\mathbb A}(\lie g)$.\endd
\end{prop}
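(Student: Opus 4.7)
The plan is to exhibit the proposition as a consequence of the standard integral form theory. The natural inclusion $U_{\mathbb A}(\tlie g)\hookrightarrow U_q(\tlie g)$ is $\mathbb A$-linear, so by the universal property of base change it induces an $\mathbb F$-algebra homomorphism
\[
\varphi: \mathbb F\otimes_{\mathbb A} U_{\mathbb A}(\tlie g)\to U_q(\tlie g).
\]
It suffices to prove that $\varphi$ is both surjective and injective, and similarly for the finite-type statement.

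For surjectivity, I would observe that the Drinfeld--Jimbo generators $x_i^\pm=x_{i,0}^\pm=(x_{i,0}^\pm)^{(1)}$ and $k_i^{\pm 1}$ of $U_q(\lie g)$ already lie in $U_{\mathbb A}(\lie g)$, and together they generate $U_q(\lie g)$ as an $\mathbb F$-algebra. The same observation, together with the fact that Drinfeld loop-like generators $x_{i,r}^\pm$ and $k_i^{\pm 1}$ are visibly in $U_{\mathbb A}(\tlie g)$ (and the remaining loop generators $h_{i,s}$ can be recovered from them via the Drinfeld relations inside $U_q(\tlie g)$), shows that the image of $\varphi$ contains a set of $\mathbb F$-algebra generators of $U_q(\tlie g)$. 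Hence $\varphi$ is surjective in both cases.

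For injectivity, the essential input is the existence of an $\mathbb A$-basis of $U_{\mathbb A}(\tlie g)$ (respectively $U_{\mathbb A}(\lie g)$) of PBW type: Lusztig's integral PBW theorem in finite type handles $U_{\mathbb A}(\lie g)$, and Beck's PBW theorem for the Drinfeld presentation, combined with Lusztig's divided-power techniques, provides a corresponding $\mathbb A$-basis of $U_{\mathbb A}(\tlie g)$. Since $U_{\mathbb A}$ is thereby a free $\mathbb A$-module, any such basis remains $\mathbb F$-linearly independent after applying $\mathbb F\otimes_{\mathbb A}(-)$, and it must map, under $\varphi$, to a spanning set of $U_q(\tlie g)$ (by surjectivity) which is also a PBW basis there. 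Thus $\varphi$ sends an $\mathbb F$-basis to an $\mathbb F$-basis and is an isomorphism.

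The hard part of this plan is really black-boxed into the integral PBW theorem for the affine case; once that is cited from the standard references (compare \cite[Lemma 2.1]{cha:fer} and the works cited there), both directions of the argument reduce to essentially formal manipulations with bases. The finite-type statement is strictly easier and follows by the same argument applied to Lusztig's classical integral form.
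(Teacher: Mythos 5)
The paper offers no proof of this proposition at all: it simply points to \cite[Lemma 2.1]{cha:fer} and the references cited there, so there is no in-paper argument to compare against. Your sketch is a correct reconstruction of the standard argument that those references supply. The surjectivity half is the genuinely elementary content: since $x_{i,r}^{\pm}=(x_{i,r}^{\pm})^{(1)}$ and $k_i^{\pm 1}$ lie in $U_{\mathbb A}(\tlie g)$, and the $h_{i,s}$ are recovered from $[x_{i,s}^{+},x_{i,0}^{-}]$ and the $k_i^{\pm1}$ inside $U_q(\tlie g)$, the $\mathbb F$-span of $U_{\mathbb A}(\tlie g)$ contains a generating set. For injectivity you are right that some form of the integral PBW theorem is unavoidable, and it is worth being explicit about why: flatness of $\mathbb F$ over $\mathbb A$ is \emph{not} enough, because $\mathbb F\otimes_{\mathbb A}\mathbb F\ne\mathbb F$ (the multiplication map $\mathbb F\otimes_{\mathbb C(q)}\mathbb F\to\mathbb F$ has a kernel once $\mathbb F$ is an algebraic closure of $\mathbb C(q)$), so one cannot deduce injectivity of $\varphi$ purely by base change from the inclusion $U_{\mathbb A}\hookrightarrow U_q$. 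What saves the argument is exactly what you invoke: an $\mathbb A$-basis of $U_{\mathbb A}(\tlie g)$ whose image, up to scalars in $\mathbb C(q)^{\times}$, is an $\mathbb F$-basis of $U_q(\tlie g)$ (Lusztig in finite type, Beck et al.\ for the Drinfeld-generator integral form in the loop case). That input is precisely what the paper's citation is black-boxing, so your proof is at the same level of rigor as the source it replaces; a fully self-contained treatment would have to verify that the integral form generated by divided powers of the \emph{Drinfeld} loop generators admits such a basis, which is the one nontrivial point hidden in your appeal to the literature.
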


Regard $\mathbb C$ as an $\mathbb A$-module by letting $q$ act as $1$ and set
\begin{equation}
\overline{U_q(\tlie g)} = \mathbb C\otimes_\mathbb A U_\mathbb A(\tlie g) \qquad\text{and}\qquad \overline{U_q(\lie g)} = \mathbb C\otimes_\mathbb A U_\mathbb A(\lie g).
\end{equation}
Denote by $\overline\eta$ the image of $\eta\in U_\mathbb A(\tlie g)$ in $\overline{U_q(\tlie g)}$. The proof of the next proposition can be found in \cite[Proposition 9.2.3]{cp:book} and \cite{lus:book}.

\begin{prop}\label{p:cluq}
	$U(\tlie g)$ is isomorphic to the quotient of $\overline{U_q(\tlie g)}$ by the ideal generated by ${\overline k_i}-1, i\in I$. In particular, the category of $\overline{U_q(\tlie g)}$-modules on which $k_i$ act as the identity operator for all $i\in I$ is equivalent to the category of all $\tlie g$-modules.\endd
\end{prop}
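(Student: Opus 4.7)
The plan is to construct a surjective algebra homomorphism $\bar\varphi:\overline{U_q(\tlie g)}/\langle \overline{k_i}-1:i\in I\rangle \to U(\tlie g)$ and show it is an isomorphism. Once this is established, the second assertion about the equivalence of module categories is automatic, since a module on which every $\overline{k_i}$ acts as the identity is precisely a module for the quotient algebra, and equivalences of categories of modules are induced by algebra isomorphisms.

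First I would set up the map using the Drinfeld loop-like presentation. The integral form $U_{\mathbb A}(\tlie g)$ is generated over $\mathbb A$ by the divided powers $(x_{i,r}^\pm)^{(k)}$ together with $k_i^{\pm 1}$, and the elements $h_{i,s}$ can be recovered via Drinfeld-type identities as commutators of the $x^\pm$'s (together with explicit Cartan expressions analogous to the $\tqbinom{k_i;c}{r}$ appearing in finite type). I define $\varphi$ on $\overline{U_q(\tlie g)}$ by $\overline{(x_{i,r}^\pm)^{(k)}}\mapsto (x_{i,r}^\pm)^k/k!$, $\overline{k_i}\mapsto 1$, and $\overline{h_{i,s}}\mapsto h_{i,s}$. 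The key check is that all relations survive the $q\to 1$ specialization: the quantum integers $[n]_q$ become $n$, quantum Serre relations collapse to classical ones, the commutation $k_i x_{j,r}^\pm k_i^{-1}=q^{\pm c_{i,j}} x_{j,r}^\pm$ becomes trivial after imposing $\overline{k_i}=1$, and the Drinfeld relations between the $x^\pm$'s and $h_{i,s}$ descend to the classical loop-algebra relations in $\tlie g$. Surjectivity is clear since the images of the $\overline{x_{i,r}^\pm}$ and $\overline{h_{i,s}}$ generate $U(\tlie g)$.

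For injectivity I would invoke the triangular decomposition and PBW theorem on both sides. By the quantum PBW theorem for the integral form (due to Lusztig and, in the affine setting, to Beck--Chari--Pressley), one has
\begin{equation*}
U_{\mathbb A}(\tlie g)\;\cong\; U_{\mathbb A}(\tlie n^-)\otimes_{\mathbb A} U_{\mathbb A}(\tlie h)\otimes_{\mathbb A} U_{\mathbb A}(\tlie n^+)
\end{equation*}
as $\mathbb A$-modules, with explicit $\mathbb A$-bases in each factor built from ordered monomials in root vectors (and their divided powers) on the $\pm$ sides and from Cartan divided-power expressions in the middle. Each such basis specializes at $q=1$, modulo $\overline{k_i}-1$, to the classical PBW basis of the corresponding factor of $U(\tlie g)=U(\tlie n^-)\otimes U(\tlie h)\otimes U(\tlie n^+)$. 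A basis comparison thus shows $\bar\varphi$ is bijective.

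The main obstacle is the Cartan part. The elements $h_{i,s}$ are not among the $\mathbb A$-algebra generators of $U_{\mathbb A}(\tlie g)$, and their integral-form expressions involve intricate combinations of $k_i$'s and quantum binomial coefficients; one must verify that after setting $\overline{k_i}=1$ these produce exactly the classical $h_{i,s}$ with no spurious relations, so that the middle factor $\overline{U_{\mathbb A}(\tlie h)}/\langle \overline{k_i}-1\rangle$ is the polynomial ring $U(\tlie h)$ and not a proper quotient thereof. This is precisely the content of Lusztig's analysis of the integral form and is where the technical heart of the argument lies.
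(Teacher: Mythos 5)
The paper gives no proof of this proposition; it simply cites \cite[Proposition 9.2.3]{cp:book} and \cite{lus:book}, and your specialization-plus-integral-PBW argument is precisely the standard proof found in those references. Your outline is correct, and you have rightly isolated the Cartan/imaginary part of the integral form (the specialization of the middle factor without spurious relations) as the technical point that those references supply.
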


Denote by $\cal P_\mathbb A^+$ the subset of $\cal P_q$ consisting of $n$-tuples of polynomials with coefficients in $\mathbb A$. Let also $\cal P_\mathbb A^\times$ be the subset of $\cal P_\mathbb A^+$ consisting of $n$-tuples of polynomials whose leading terms are in $\mathbb Cq^{\mathbb Z}\backslash\{0\}=\mathbb A^\times$. Given $\bs\omega\in\cal P_\mathbb A^+$, let $\overline{\bs\omega}$ be the element of $\cal P^+_q$ obtained from $\bs\omega$ by evaluating $q$ at $1$.
Given a $U_\mathbb A(\tlie g)$-submodule $L$ of a $U_q(\tlie g)$-module $V$, define
\begin{equation}\label{e:clm}
\bar L = \mathbb C\otimes_\mathbb A L.
\end{equation}
Then, $\bar L$ is a $\tlie g$-module by Proposition \ref{p:cluq}. The next theorem was proved in \cite{cp:weyl}.

\begin{thm}\label{t:llattice}
	Let $\bs\omega\in \cal P_\mathbb A^\times$, $V\in\tilde{\mathcal C}_q$ a highest-$\ell$-weight module of highest $\ell$-weight $\bs{\omega}$, $v\in V_{\bs{\omega}}\setminus\{0\}$, and $L=U_\mathbb A(\tlie g)v$. Then, $\bar L$ is a highest-$\ell$-weight module for $\tlie g$ with highest-$\ell$-weight $\overline{\bs{\omega}}$ and $\ch(\bar L)=\ch(V)$.\endd
\end{thm}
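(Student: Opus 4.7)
The plan is to use the standard $\mathbb{A}$-lattice/specialization strategy, refined to keep track of the $\ell$-weight data. First, I would establish that $L$ is a finitely generated free $\mathbb{A}$-module and an $\mathbb{A}$-lattice in $V$, i.e., the natural map $\mathbb{F}\otimes_\mathbb{A} L\to V$ is an isomorphism. Surjectivity is immediate: since $V$ is a highest-$\ell$-weight module for $U_q(\tlie g)=\mathbb{F}\otimes_\mathbb{A} U_\mathbb{A}(\tlie g)$ generated by $v$, every element of $V$ is an $\mathbb{F}$-linear combination of elements of $U_\mathbb{A}(\tlie g)v=L$. Injectivity follows because $L$, being a submodule of the finite-dimensional $\mathbb{F}$-vector space $V$, is torsion-free over the PID $\mathbb{A}$, and in a torsion-free finitely generated $\mathbb{A}$-module localizing at $\mathbb{F}$ loses no information.

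Next, I would refine this to the weight decomposition by showing that $L=\bigoplus_{\mu\in P} L_\mu$, where $L_\mu:=L\cap V_\mu$ is free over $\mathbb{A}$ with $\mathrm{rank}_\mathbb{A}(L_\mu)=\dim_\mathbb{F}(V_\mu)$. Since each $k_i\in U_\mathbb{A}(\tlie g)$ acts on $V_\mu$ by the scalar $q^{\mu(h_i)}\in\mathbb{A}^\times$ and only finitely many weights occur in $V$, a Vandermonde/Lagrange interpolation over $\mathbb{A}$ using polynomials in the $k_i$'s produces $\mathbb{A}$-linear projections $V\twoheadrightarrow V_\mu$ that preserve $L$, yielding the desired decomposition.

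The key step is the specialization. By Proposition \ref{p:cluq}, $\bar L=\mathbb{C}\otimes_\mathbb{A} L$ is a $\tlie g$-module (the $\bar k_i$ act as $1$ as required) and $\bar L=U(\tlie g)\bar v$ for $\bar v=1\otimes v$. The relations $x_{i,r}^+ v=0$ specialize to $\overline{x_{i,r}^+}\,\bar v=0$, and the hypothesis $\bs{\omega}\in\cal P^\times_\mathbb{A}$ is precisely what is needed so that the elements of $U_\mathbb{A}(\tlie h)$ act on $v$ by scalars lying in $\mathbb{A}$ whose specializations at $q=1$ are the coefficients of the rational functions defining $\bs{\Psi}_{\overline{\bs\omega}}$. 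Hence $\bar v$ is a highest-$\ell$-weight vector for $\tlie g$ of $\ell$-weight $\overline{\bs{\omega}}$, and by Proposition \ref{p:genbycurrent} $\bar L$ is a highest-$\ell$-weight $\tlie g$-module. Finally, the character identity $\ch(\bar L)=\ch(V)$ follows at once from the second step: since $L_\mu$ is free of rank $\dim_\mathbb{F} V_\mu$, the $\mathbb{C}$-dimension of $\mathbb{C}\otimes_\mathbb{A} L_\mu$ equals $\dim_\mathbb{F} V_\mu$, and the $\mathbb{C}\otimes_\mathbb{A} L_\mu$ are exactly the $\lie h$-weight spaces of $\bar L$ (the $k_i$-grading on $L$ collapses under $q\mapsto 1$ to the $\lie h$-grading on $\bar L$ coming from Proposition \ref{p:cluq}).

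I expect the main obstacle to be the verification inside the third step that the integral form of $U_q(\tlie h)$, generated by the divided powers of the $x_{i,r}^\pm$ together with the $k_i^{\pm 1}$, really does act on $v$ by scalars in $\mathbb{A}$ whose $q\mapsto 1$ specialization matches $\bs{\Psi}_{\overline{\bs\omega}}$. This amounts to reading off the action of the integral generators of $U_q(\tlie h)$ on $v$ from the rational functions $\bs{\omega}_i(u)$, and the condition $\bs{\omega}\in\cal P_\mathbb{A}^\times$ (polynomial entries with leading coefficient a unit in $\mathbb{A}$) is exactly designed to guarantee both that the relevant coefficients lie in $\mathbb{A}$ and that the specialization does not drop any $\ell$-weight information. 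Once this is in place, the assertions about $\bar L$ being a highest-$\ell$-weight $\tlie g$-module and the character identity are formal consequences of the $\mathbb{A}$-lattice formalism developed in the earlier paragraphs.
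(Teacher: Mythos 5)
The paper does not actually prove this theorem --- it is quoted from \cite{cp:weyl} --- and your outline follows the same lattice-and-specialization strategy used there. But there is a genuine gap at the foundational step: you assert that $L$ (equivalently, each $L_\mu=L\cap V_\mu$) is a \emph{finitely generated} $\mathbb A$-module and then invoke the structure theory of finitely generated torsion-free modules over a PID. Finite generation is precisely the hard content of the theorem, and it does not follow from the general facts you cite: a torsion-free $\mathbb A$-submodule of a finite-dimensional $\mathbb F$-vector space need not be finitely generated (e.g.\ $\mathbb C(q)\subset\mathbb F$ inside a one-dimensional space), and for such submodules the specialization $\mathbb C\otimes_{\mathbb A}(-)$ can collapse or have the wrong dimension, so both the lattice property $\mathbb F\otimes_{\mathbb A}L\cong V$ and the equality $\dim_{\mathbb C}(\mathbb C\otimes_{\mathbb A}L_\mu)=\dim_{\mathbb F}V_\mu$ are unjustified without it. In the loop setting this is a real issue because the weight spaces $U_{\mathbb A}(\tlie n^-)_{-\eta}$ have infinite $\mathbb A$-rank (the loop parameter $r\in\mathbb Z$ is unbounded), so $L_\mu$ is not a priori finitely generated. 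The proof in \cite{cp:weyl} establishes finite generation via Garland-type identities, which express $x^-_{i,r}v$ for $|r|$ large as $\mathbb A$-linear combinations of $x^-_{i,s}v$ with $s$ in a fixed finite range; this is exactly where the hypothesis $\bs\omega\in\mathcal P_{\mathbb A}^\times$ (polynomial entries with \emph{unit} leading coefficient) enters, since the recursion requires inverting that leading coefficient. You flag the action of $U_{\mathbb A}(\tlie h)$ on $v$ as the main obstacle, but that is the easier half of the argument.

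A secondary, fixable issue: the Lagrange-interpolation projections onto weight spaces involve denominators $q^{\mu(h_i)}-q^{\nu(h_i)}=q^{\nu(h_i)}(q^{\mu(h_i)-\nu(h_i)}-1)$, which are \emph{not} units in $\mathbb A=\mathbb C[q,q^{-1}]$, so these projections do not obviously preserve $L$. The decomposition $L=\bigoplus_\mu L_\mu$ should instead be obtained by noting that $U_{\mathbb A}(\tlie g)$ is spanned over $\mathbb A$ by ${\rm ad}(k_i)$-eigenvectors, so that $L$ is spanned by weight vectors of $V$.
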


Given $\bs\omega\in \cal P_\mathbb A^\times$, we denote by $\overline{V_q(\bs\omega)}$ the $\tlie g$-module $\bar L$ with $L$ as in the above theorem.

Assume  $\overline{\bs{\omega}}=\bs{\omega}_{\lambda,a}$ for some $a\in\mathbb C^\times$ and let $\bar v$ be a nonzero vector in $\overline{V_q(\bs{\omega})}_{\lambda}$. It follows from Theorem \ref{t:llattice} that
\begin{equation*}
(h\otimes f(t))\bar v = f(a) h\bar v \quad\text{for all}\quad h\in\lie h,\ f(t)\in\mathbb C[t,t^{-1}].
\end{equation*}
Moreover, it follows from the proof of \cite[Proposition 3.13]{mou} that, if $J$ is a connected subdiagram of type $A$ such that $V_q(\bs{\omega}_J)$ is a minimal affinization, then
\begin{equation}\label{e:minArel}
x_{\alpha,r}^-\bar v = a^rx_{\alpha}^-\bar v \quad\text{for all}\quad \alpha\in R_J^+, r\ge 0.
\end{equation}
We shall regard $\overline{V_q(\bs{\omega})}$ as a $\lie g[t]$-module which is generated by $\bar v$ by Proposition \ref{p:genbycurrent}. Denote by $L(\bs{\omega})$ the $\lie g[t]$-module obtained from $\overline{V_q(\bs{\omega})}$ by pulling-back the action by the automorphism $\tau_a$ defined in \eqref{e:grtaua} and let $v\in L(\bs{\omega})_\lambda\setminus\{0\}$. It follows from the above considerations that $L(\bs{\omega})=U(\lie g[t])v$, $\lie n^+[t]v = \lie h[t]_+ v = 0, hv=\lambda(h)v$ for all $h\in\lie h$. Hence, $L(\bs{\omega})$ is a quotient of $W(\lambda)$. Moreover, Theorem \ref{t:llattice} and part (d) of Theorem \ref{t:ciuqg} imply \eqref{e:grlimch}.
Also, by by \eqref{e:minArel}, if $J$ is a connected subdiagram of type $A$ such that $V_q(\bs{\omega}_J)$ is a minimal affinization,
\begin{equation}\label{e:minAgrel}
x_{\alpha,r}^-v=0 \quad\text{for all}\quad \alpha\in R_J^+,\ r>0,
\end{equation}
which easily implies Lemma \ref{l:M>>L}.

\section{Proofs}\label{s:proof}

\subsection{On Characters and Tensor Products of KR-Modules}\label{ss:KR}

We will need some information on  qcharacters and tensor products of  Kirillov-Resehetikhin modules. 
Thus, let $\bs{\omega}=\bs{\omega}_{i,a,m}$ for some $i\in I,a\in\mathbb F, m>0$ and let $v\in L(\bs{\omega})_{m\omega_i}\setminus\{0\}$.
We begin with the following well-known fact:
\begin{equation}\label{e:ifKR}
\mu\in P^+,\ \mu<m\omega_i,\ m_\mu(L(\bs{\omega}))\ne 0 \quad\Rightarrow\quad \het_i(\lambda-\mu)>1.
\end{equation}
Since, for all connected subdiagrams $J$, we have $\het_i(\vartheta_J)\le 1$, this implies
\begin{equation}\label{e:KRupb}
\dim(V_q(\bs{\omega})_{m\omega_i-\vartheta_J}) = \dim(V(m\omega_i)_{m\omega_i-\vartheta_J}) \le 1 
\end{equation}
where the inequality follows from Proposition \ref{p:parts} which also implies that 
\begin{equation}\label{e:KRdim}
\dim(V_q(\bs{\omega})_{m\omega_i-\vartheta_J}) =  1 \quad\Leftrightarrow\quad i\in J. 
\end{equation}

Let $M$ be a tensor product of KR-modules associated to distinct nodes, say $V_q(\bs{\omega}_{i,a_i,m_i}),i\in I$, and set $\lambda=\sum_i m_i\omega_i$.
Let also $W= \otimes_{i\in I} V(m_i\omega_i)$. 
It follows from the above discussion that
\begin{equation}\label{e:tpKR=tpc}
 \dim(W_{\lambda-\vartheta_J}) = \dim\left(M_{\lambda-\vartheta_J}\right) \qquad\text{for all}\qquad J\subseteq I.
\end{equation}
In particular, Proposition \ref{e:upbclassical} apply to $M$ in place of $W$. Therefore, if $V,\nu_k$, and $\nu$ are as in Proposition \ref{p:outer}, it follows that
\begin{equation}\label{e:uppbnunuk}
 m_{\nu_k}(V)\le 1 \qquad\text{and}\qquad m_\nu(V)\le 2.
\end{equation}
Proposition \ref{p:Anotmintheta} then implies that $m_{\nu_k}(V)=1$ thus completing the proof of part (a) of Proposition \ref{p:outer}.
A proof of \eqref{e:ifKR} will be reviewed along the way when we perform some estimates using graded limits in Section \ref{ss:upper}. These estimates will also imply that we actually have
\begin{equation}\label{e:upbnu}
m_\nu(V)\le 1,
\end{equation}
an improvement of \eqref{e:uppbnunuk} which will be crucial in our approach for proving the last two parts of Proposition \ref{p:outer}.

Since most of the literature on qcharacters uses the $Y$-notation of \cite{freres:qchar}, we shall write all arguments within the context of qcharacters using that notation as well. Given $a\in\mathbb F^\times, i\in I, r\in\Z, m\in \Z_{\ge 0}$, set
\begin{equation}\label{e:Ynotation}
Y_{i,r,m} = \bs\omega_{i,aq^{m+r-1},m}.
\end{equation}
Let $\cal P_\mathbb Z$ be the submonoid of $\mathcal P_q$ generated by $Y_{i,r}:=Y_{i,r,1},i\in I,r\in\mathbb Z$.
Following \cite{fremuk:qchar}, given $\bs\omega\in\cal P_\mathbb Z\setminus\{\bs 1\}$, define
\begin{equation}\label{eq def r(omega)}
r(\bs\omega):=\max\{r\in \Z : Y_{i,r}^{\pm 1} \text{ appears in } \bs\omega \text{ for some } i\in I\}.
\end{equation}
Then, $\bs\omega$ is said to be right negative if $Y_{i,r(\bs\omega)}$ does not appear in $\bs\omega$ for all $i\in I$.
Clearly, the product of right negative $\ell$-weights is a right negative $\ell$-weight and a dominant $\ell$-weight is not right negative.

Given a connected subdiagram $J\subseteq I$ define
\begin{equation*}
J^+ = \{i\in I\setminus J: c_{i,j}<0 \text{ for some } j\in J\}.
\end{equation*}
If $l\in J$, define also
\begin{equation}\label{e:partial_l}
\partial_lJ = \begin{cases} l,& \text{ if } J=\{l\},\\ (\partial J)\setminus\{l\},& \text{otherwise,}\end{cases}
\end{equation}
and, 
\begin{equation}\label{e:KR(J)}
\begin{aligned} Y_{l,r,m}(J) = &\ Y_{l,r,m-1} \left(\prod_{i\in J^+} Y_{i,r+2(m-1) + d(i,l)}\right)\left(\prod_{i\in\partial_l J} Y_{i,r+2m + d(i,l)}\right)^{-1}\times \\ & (Y_{i_*,r+2m+d(i_*,l)})^{\epsilon}
\end{aligned}
\end{equation}
for all $r,m\in\mathbb Z, m>0$,  where
\begin{equation*}
\epsilon = 0 \text{ if $J$ is of type $A$ and } \epsilon=1 \text{ otherwise.}
\end{equation*}
In particular, 
\begin{equation*}
Y_{l,r,m}(J) \text{ is right negative\ \ \ and}\quad r(Y_{l,r,m}(J)) = r+2m+d_{l,J}
\end{equation*}
where $d_{l,J}=\max\{d(l,j):j\in J\}$.
Set also
\begin{equation}\label{e:KR(J)top}
Y_{l,r,m}(J) = Y_{l,r,m}  \quad\text{if}\quad l\notin J.
\end{equation}

\begin{lem}\label{l:qcKR1}
	Let $V=V_q(Y_{l,r,m})$ for some $l\in I, r,m\in\mathbb Z,m>0$, and $\lambda=m\omega_l$. For every connected subdiagram $J\subseteq I$ containing $l$,  $\dim(V_{\lambda-\vartheta_J})= 1$ and $V_{\lambda-\vartheta_J}=V_{Y_{l,r,m}(J)}$.
\end{lem}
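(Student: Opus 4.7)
The plan is to separate the two assertions. Since $l\in J$, the combination of \eqref{e:KRupb} and \eqref{e:KRdim} already established in Section~\ref{ss:KR} immediately gives $\dim(V_{\lambda-\vartheta_J})=1$. In particular, $V_{\lambda-\vartheta_J}$ is itself a single $\ell$-weight space $V_{\bs\varpi}$, so it suffices to exhibit the specific monomial $Y_{l,r,m}(J)$ as an element of $\wt_\ell(V)$; once this is done, $\bs\varpi=Y_{l,r,m}(J)$ follows automatically.

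I would prove this existence by induction on $|J|$, with the stronger inductive statement that for every connected $J'\ni l$, the monomial $Y_{l,r,m}(J')$ spans $V_{\lambda-\vartheta_{J'}}$. For the base case $J=\{l\}$, a highest-$\ell$-weight vector $v\in V_{Y_{l,r,m}}$ satisfies $x_{l,s}^+v=0$ for all $s$, so Lemma~\ref{l:qcbasic} applied at $i=l$ (with parameter $aq^{r+m-1}$ and exponent $m$) yields $Y_{l,r,m}\,\bs\alpha_{l,aq^{r+2m-2}}^{-1}\in\wt_\ell(V)$. Expanding $\bs\alpha_{l,aq^s}^{-1}=Y_{l,s}^{-1}Y_{l,s+2}^{-1}\prod_{i\sim l}Y_{i,s+1}$ with $s=r+2m-2$ and using $Y_{l,r,m}\,Y_{l,r+2m-2}^{-1}=Y_{l,r,m-1}$, one matches this to formula~\eqref{e:KR(J)} with $J^+=N(l)$, $\partial_l J=\{l\}$, and $\epsilon=0$.

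For the inductive step, write $J=J'\cup\{k\}$ with $k\in\partial J\setminus\{l\}$ (available because $J$ is a subtree with $|J|\ge 2$ and $l\in J$), and let $j$ be the unique neighbor of $k$ in $J'$. By induction, $Y_{l,r,m}(J')\in\wt_\ell(V)$ and spans $V_{\lambda-\vartheta_{J'}}$. For any nonzero $v'$ in that $\ell$-weight space, $x_{k,s}^+v'$ lies in $V_{\lambda-\vartheta_{J'}+\alpha_k}$, which vanishes because $k\notin J'$ makes the coefficient of $\alpha_k$ in $\vartheta_{J'}-\alpha_k$ equal to $-1$, so $\vartheta_{J'}-\alpha_k\notin Q^+$. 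Inspection of \eqref{e:KR(J)} identifies the $k$-component of $Y_{l,r,m}(J')$ as $Y_{k,s}=\bs\omega_{k,aq^s,1}$ with $s=r+2(m-1)+d(k,l)$; even when $k=i_*$, the $\epsilon$-factor of $Y_{l,r,m}(J')$ contributes nothing at that coordinate, because $i_*\notin J'$ forces $J'$ into a single branch and hence to be of type~$A$, so $\epsilon_{J'}=0$. Lemma~\ref{l:qcbasic} then produces $Y_{l,r,m}(J')\,\bs\alpha_{k,aq^s}^{-1}\in\wt_\ell(V)$, and using $\bs\alpha_{k,aq^s}^{-1}=Y_{k,s}^{-1}Y_{k,s+2}^{-1}\prod_{i\sim k}Y_{i,s+1}$ the desired identity $Y_{l,r,m}(J')\,\bs\alpha_{k,aq^s}^{-1}=Y_{l,r,m}(J)$ reduces to tracking how $J'^+$ becomes $J^+$ (swapping $k$ for the neighbors of $k$ outside $J$, each at distance $d(k,l)+1$ from $l$), how $\partial_l J'$ becomes $\partial_l J$ (gaining $k$ and, depending on whether $j$ was a leaf of $J'$ and whether $|J'|=1$, possibly losing $j$ or $l$), and how $\epsilon$ switches from $0$ to $1$ precisely in the subcase where $k$ opens a new branch through $i_*$ (forcing $k\sim i_*$). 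The principal obstacle is this bookkeeping around $i_*$, but in each subcase the factors of $\bs\alpha_{k,aq^s}^{-1}$ exactly absorb the discrepancy, closing the induction.
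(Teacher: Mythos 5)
Your proposal is correct and follows essentially the same route as the paper's proof: reduce to showing $Y_{l,r,m}(J)\in\wtl(V)$ via \eqref{e:KRupb}--\eqref{e:KRdim}, then induct on $\#J$ by peeling off a node of $\partial_l J$, using the vanishing of the weight space at $\lambda-\vartheta_{J'}+\alpha_k$ to invoke Lemma \ref{l:qcbasic}. The only cosmetic difference is that you treat $J=\{l\}$ as a separate base case, whereas the paper starts the induction at $J'=\emptyset$ using the convention \eqref{e:KR(J)top}; your explicit bookkeeping around $i_*$ and $\epsilon$ is exactly what the paper leaves to ``one easily checks.''
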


\begin{proof}
	After \eqref{e:KRupb} and \eqref{e:KRdim}, it suffices to show that
	\begin{equation}\label{e:qcKR1}
	Y_{l,r,m}(J)\in\wtl(V)
	\end{equation}
	for every connected subdiagram $J$ which is either empty or contains $l$. This is obvious if $J=\emptyset$. Otherwise, let $i\in\partial_l J, J'=J\setminus\{i\}$, and assume, by induction hypothesis on $\#J$, that $Y_{l,r,m}(J')\in\wtl(V)$. Note also that $\lambda-\vartheta_{J'}+\alpha_i$ is not a weight of $V$ and, hence, if $v\in V_{Y_{l,r,m}(J')}$, we have
	\begin{equation*}
	x_{i,r}^+v = 0 \qquad\text{for all}\qquad r\in\mathbb Z.
	\end{equation*}
	It follows that the hypotheses of Lemma \ref{l:qcbasic} are satisfied and one easily checks that $Y_{l,r,m}(J)$ is obtained from $Y_{l,r,m}(J')$ by the formula \eqref{e:qcbasic}, thus proving \eqref{e:qcKR1}.
\end{proof}

\begin{lem}\label{l:W(2)_0}
	Let $i,j\in I, \bs{\omega} = Y_{i,r_i,m_i}Y_{j,r_j,m_j}\bs{\varpi}$ for some $r_i,r_j,m_i,m_j\in\mathbb Z, m_i,m_j>0$, and $\bs{\varpi}\in\mathcal P^+$ such that $[i,j]\cap\supp(\wt(\bs{\varpi}))=\emptyset$. Let also $\lambda=\wt(\bs{\omega})$ and assume $r_i\le r_j$. Then,
	\begin{equation*}
	  \dim(V_q(\bs{\omega})_{\lambda-\vartheta_{[i,j]}})= 
	  \begin{cases}
	  d(i,j)+1,& \text{if }  r_j-r_i= 2m_i+d(i,j),\\  d(i,j)+2, &\text{otherwise.}
	  \end{cases}
	\end{equation*}
\end{lem}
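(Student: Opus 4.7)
The plan is to reduce the computation to a type-$A$ problem on the subdiagram $J=[i,j]$ and then split according to Theorem \ref{teo tipo A}. First, since $\bs{\varpi}\in\mathcal P^+$ and $\supp(\wt(\bs{\varpi}))\cap J=\emptyset$, all components $\bs{\varpi}_k(u)=1$ for $k\in J$, so $\bs{\omega}_J = Y_{i,r_i,m_i}Y_{j,r_j,m_j}$. The target weight $\lambda-\vartheta_J$ lies in $\lambda-Q^+_J$, so Lemma \ref{lema multi subdiagrama} together with \eqref{eq mult subdiagram} reduces the problem to computing $\dim V_q(Y_{i,r_i,m_i}Y_{j,r_j,m_j})_{\lambda_J-\vartheta_J}$ inside the type-$A$ subalgebra $U_q(\tlie g_J)$ of rank $d(i,j)+1$.

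Next I obtain an upper bound by comparison with the tensor product $T=V_q(Y_{i,r_i,m_i})\otimes V_q(Y_{j,r_j,m_j})$. By \eqref{e:tpKR=tpc}, $\dim T_{\lambda_J-\vartheta_J}$ equals the analogous weight-space dimension of the classical tensor product $V(m_i\omega_i)\otimes V(m_j\omega_j)$, which I compute by enumerating pairs $(J_1,J_2)$ with $J_1\sqcup J_2=J$, each $J_a$ empty or connected and containing its corresponding extremal node. By Proposition \ref{p:parts} and \eqref{e:dimVnuall}, each such pair contributes one dimension, and a direct count (parametrizing $J_1$ by its rightmost node when nonempty, plus the case $J_1=\emptyset$) gives $d(i,j)+2$. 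Since $V_q(\bs{\omega}_J)$ is a simple composition factor of $T$, we conclude $\dim V_q(\bs{\omega}_J)_{\lambda_J-\vartheta_J}\le d(i,j)+2$.

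Matching the parameters of $\bs{\omega}_J$ to Theorem \ref{teo tipo A}, $V_q(\bs{\omega}_J)$ is a minimal affinization in type $A_{d(i,j)+1}$ iff $(m_i+r_i)-(m_j+r_j)=\pm(m_i+m_j+d(i,j))$; the hypothesis $r_i\le r_j$ together with $m_j>0$ excludes the $+$ sign and leaves exactly $r_j-r_i=2m_i+d(i,j)$. When this holds, $V_q(\bs{\omega}_J)\cong V(m_i\omega_i+m_j\omega_j)$ as a $U_q(\lie g_J)$-module (a defining feature of type $A$), and Proposition \ref{p:parts} with \eqref{e:dimVnuall} yields $\#\overline{\supp}(\lambda_J)=d(i,j)+1$, giving the first case.

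In the remaining case $r_j-r_i\ne 2m_i+d(i,j)$, I must show the upper bound is attained. If $T$ is irreducible then $V_q(\bs{\omega}_J)=T$ and we are done. Otherwise, Theorem \ref{t:tpa} constrains the reducibility: condition (ii) forces $r_j-r_i\in\{d(i,j)+2,d(i,j)+4,\ldots,d(i,j)+2m_i\}$ with the auxiliary parameter $\eta'\in\{1,\ldots,\min\{m_i,m_j\}\}$, and the value $\eta'=1$ coincides precisely with the excluded minimal-affinization condition; hence every remaining reducible sub-case has $\eta'\ge 2$. By the second half of Theorem \ref{t:tpa} (see Remark \ref{r:tpa}), the secondary irreducible factor of $T$ has an explicitly described Drinfeld polynomial whose classical highest weight lies strictly below $\lambda_J-\vartheta_J$ whenever $\eta'\ge 2$, so it contributes $0$ to this weight space, and $V_q(\bs{\omega}_J)$ retains all $d(i,j)+2$ dimensions. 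The main obstacle is precisely this last verification, which requires unpacking the explicit secondary Drinfeld polynomial to confirm the classical weight drop strictly exceeds $\vartheta_J$ whenever $\eta'\ge 2$. A perhaps cleaner alternative route is to produce the $d(i,j)+2$ independent $\ell$-weights directly inside $V_q(\bs{\omega}_J)$ by iterating Lemma \ref{l:qcbasic} along the partitions indexed by $k\in\{i-1,i,\ldots,j\}$, paralleling the inductive construction in the proof of Lemma \ref{l:qcKR1}.
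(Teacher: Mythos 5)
Your reduction to type $A$ via Lemma \ref{lema multi subdiagrama}, the upper bound $d(i,j)+2$ obtained by comparison with the classical tensor product (this is exactly \eqref{e:tpKR=tpc} combined with Proposition \ref{e:upbclassical}), and the treatment of the case $r_j-r_i=2m_i+d(i,j)$ via Theorem \ref{teo tipo A} all match the paper's proof. The gap is in the remaining case: your argument for the lower bound $\dim(V_q(\bs{\omega}_J)_{\lambda_J-\vartheta_J})\ge d(i,j)+2$ is left unfinished. You route it through the reducibility criterion of Theorem \ref{t:tpa} and the explicitly described Drinfeld polynomial of the secondary factor from the second half of the main theorem of \cite{hmp:tpa}, but you acknowledge that the decisive verification --- that this secondary factor has zero weight space at $\lambda_J-\vartheta_J$ whenever $\eta'\ge 2$ --- is not carried out; that explicit description is not even recorded in the present paper, so as written the ``otherwise'' case is not proved.

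The paper closes this case with a one-line citation you did not use: Proposition \ref{p:Anotmintheta}. In the reduced type-$A$ setting, $\bs{\omega}_{J\setminus\{i\}}$ and $\bs{\omega}_{J\setminus\{j\}}$ are each the restriction of a single Kirillov--Reshetikhin polynomial, hence correspond to minimal affinizations of the smaller type-$A$ subdiagrams, while $V_q(\bs{\omega}_J)$ itself fails to be a minimal affinization precisely when $r_j-r_i\ne 2m_i+d(i,j)$ (given $r_i\le r_j$, as you correctly computed). Proposition \ref{p:Anotmintheta} then gives $m_{\lambda_J-\vartheta_J}(V_q(\bs{\omega}_J))>0$, whence
\begin{equation*}
\dim(V_q(\bs{\omega}_J)_{\lambda_J-\vartheta_J})\ \ge\ \dim(V(\lambda_J)_{\lambda_J-\vartheta_J})+1\ =\ d(i,j)+2,
\end{equation*}
which meets your upper bound. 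This bypasses entirely the question of whether the tensor product $T$ is irreducible and what its second factor looks like. Your suggested alternative of producing $d(i,j)+2$ independent $\ell$-weights by iterating Lemma \ref{l:qcbasic} would also work (the remark immediately following the lemma in the paper lists exactly these $\ell$-weights), but it too is only sketched.
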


\begin{proof}
By Lemma \ref{lema multi subdiagrama}, we may assume $\{i,j\}=\partial I$ and, hence, $\bs{\varpi}=\bs 1$ and $\lie g$ is of type $A$.
If $r_j-r_i= 2m_i+d(i,j)$, then $V_q(\bs{\omega})$ is a minimal affinization. Hence, it is isomorphic to $V(\lambda)$ as $U_q(\lie g)$-module and we are done by Proposition \ref{p:parts} and \eqref{e:dimVnuall}. Otherwise, $V_q(\bs{\omega})$ is not a minimal affinization and Proposition \ref{p:Anotmintheta} implies
\begin{equation*}
 \dim(V_q(\bs{\omega})_{\lambda-\vartheta})\ge \dim(V(\lambda)_{\lambda-\vartheta})+1 = d(i,j)+2.
\end{equation*}
The opposite inequality is immediate from Proposition \ref{e:upbclassical}.
\end{proof}

\begin{rem}
	It is actually not difficult to prove the following improvement of the previous lemma:
	\begin{equation*}
	 \bs{\varpi}Y_{i,r_i,m_i}([i,j]\setminus J)Y_{i,r_2,m_2}(J)\in \wtl(V_q(\bs{\omega}))
	\end{equation*}
	for all connected subdiagram $J$ containing $j$. Moreover, if  $r_j-r_i\ne 2m_i+d(i,j)$, the same holds for $J=\emptyset$. Thus, these are exactly the elements of $\wtl(V_q(\bs{\omega})_{\lambda-\vartheta_{[i,j]}})$ all with multiplicity $1$.\endd
\end{rem}

\subsection{Computations with Graded Limits} \label{ss:upper}
We now compute some upper bounds for outer multiplicities in graded limits which will lead to proofs of Lemma \ref{l:<nuk}, \eqref{e:upbnu}, as well as most of Proposition \ref{p:outerM}.

Given $a\in\mathbb Z_{>0}$ and $i\in I$, set 
\begin{equation*}
R_a = \{\alpha\in R^+:\max\{\het_i(\alpha):i\in I\}=a\}
\end{equation*}
and note that $R^+$ is the disjoint union of the sets $R_a$. Fix $k\in\partial I$, recall the definition of $M_k(\lambda)$ from \eqref{e:defMk}, and let
\begin{equation*}
v\in M_k(\lambda)_\lambda\setminus\{0\}.
\end{equation*} 
We obviously have
\begin{equation}
x_{\alpha,s}^-v =0 \qquad\Rightarrow\qquad x_{\alpha,r}^-v =0 \quad\text{for all}\quad r\ge s.
\end{equation}

\begin{lem}\label{l:remroots1}
	Let $\alpha\in R_1$. Then, $x_{\alpha,2}^-v= 0$. Moreover, if there exists $l\in\partial I_k$ such that either $l_\lambda\notin \rupp(\alpha)$ or $l_\lambda=l\notin\supp(\lambda)$, 	then $x_{\alpha,1}^-v=0$.\endd
\end{lem}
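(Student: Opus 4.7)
Write $\alpha = \vartheta_J$ with $J = \rupp(\alpha)$, which is necessarily connected since $\vartheta_J\in R^+$. The strategy for both assertions is to split $J = J_1\sqcup J_2$ into two connected subdiagrams joined by a single edge, so that $\vartheta_J = \vartheta_{J_1}+\vartheta_{J_2}\in R^+$, and to exploit the loop identity
$[x_{\vartheta_{J_1},r}^-, x_{\vartheta_{J_2},s}^-] = x_{\vartheta_J,r+s}^-$
together with the defining relations of $M_k(\lambda)$.

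For Part 1, if $J\subseteq I_i$ for some $i\in\partial I_k$ then $x_{\vartheta_J,1}^- v=0$ by the defining relation; picking any $j\in I$ with $\vartheta_J(h_j)\ne 0$ (e.g.\ $j\in\partial J$, where $\vartheta_J(h_j)=1$), the identity $[h_{j,1},x_{\vartheta_J,1}^-] = -\vartheta_J(h_j)\,x_{\vartheta_J,2}^-$ combined with $h_{j,1}v=0$ yields $x_{\vartheta_J,2}^- v=0$. Otherwise $J$ must contain $i_*$ and must meet the branch $B_l=I\setminus I_l$ for every $l\in\partial I_k$. Choose such an $l$ and a neighbor $n$ of $i_*$ in $J$ lying inside $B_l$, and split $J$ at the edge $(i_*,n)$: the piece $J_2\ni n$ is contained in $B_l$, hence in $I_{l''}$ for any $l''\in\partial I_k\setminus\{l\}$ (nonempty since $\#\partial I_k\geq 2$), while the piece $J_1\ni i_*$ no longer reaches into $B_l$, so $J_1\subseteq I_l$. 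Both factors satisfy the defining relation at level $1$, so $x_{\vartheta_J,2}^-v = [x_{\vartheta_{J_1},1}^-,x_{\vartheta_{J_2},1}^-]v = 0$.

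For Part 2, fix $l\in\partial I_k$ satisfying the hypothesis and use the same kind of decomposition $J_1 = J\cap I_l$, $J_2 = J\cap B_l$; both are connected as intersections of subtrees of the tree $I$. If $J_2=\emptyset$ then $J\subseteq I_l$ and the defining relation applies directly. Assume $J_2\ne\emptyset$. Under hypothesis (ii) we have $B_l\cap\supp(\lambda)=\emptyset$, so $J_2\cap\supp(\lambda)=\emptyset$. Under hypothesis (i) the condition $l_\lambda\notin J$ together with connectivity of $J$ forces $J_2$ to lie on a single side of $l_\lambda$ in the path $B_l=(i_*,l]$: if $J_2\subseteq(l_\lambda,l]$ then $J_1$ cannot connect to $J_2$ through $l_\lambda\notin J$, whence $J_1=\emptyset$ and $J=J_2\subseteq B_l\subseteq I_{l''}$ for $l''\in\partial I_k\setminus\{l\}$, reducing to a direct application of the defining relation; if instead $J_2\subseteq(i_*,l_\lambda)$ then by the very definition of $l_\lambda$ as the closest-to-$i_*$ support in $B_l$ we have $J_2\cap\supp(\lambda)=\emptyset$. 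In the remaining case where $J_2\cap\supp(\lambda)=\emptyset$, an induction on $\het(\vartheta_{J_2})$ using the brackets $[x_{j}^-,\cdot]$ and $x_j^- v=0$ for $j\notin\supp(\lambda)$ (the analogue of \eqref{e:nosupvan} in $W(\lambda)$, and thus in $M_k(\lambda)$) yields $x_{\vartheta_{J_2}}^- v=0$. Combined with $x_{\vartheta_{J_1},1}^- v=0$ (since $J_1\subseteq I_l$), the identity
$x_{\vartheta_J,1}^- = [x_{\vartheta_{J_1},1}^-, x_{\vartheta_{J_2}}^-]$
expands to $x_{\vartheta_J,1}^- v = x_{\vartheta_{J_1},1}^- x_{\vartheta_{J_2}}^- v - x_{\vartheta_{J_2}}^- x_{\vartheta_{J_1},1}^- v = 0$.

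The main delicate point is the combinatorial analysis in Part 2: the placement of $l_\lambda$ inside $B_l$ is exactly what distinguishes the subcase in which $J_2$ carries no weight from the subcase in which $J$ itself collapses into $B_l$ and the defining relation can be invoked through a different $I_{l''}$. The definition of $l_\lambda$ is tailored precisely to make this dichotomy work.
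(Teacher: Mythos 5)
Your proof is correct and follows essentially the same route as the paper: both assertions are obtained by splitting $\alpha=\vartheta_J$ as $\vartheta_{J_1}+\vartheta_{J_2}$, with one summand killed at loop degree $1$ by the defining relations of $M_k(\lambda)$ and the other killed either at degree $1$ (for $x_{\alpha,2}^-v=0$) or at degree $0$ via the support-free relation (for $x_{\alpha,1}^-v=0$), and then applying the bracket $[x^-_{\vartheta_{J_1},r},x^-_{\vartheta_{J_2},s}]=x^-_{\vartheta_J,r+s}$. The only point left implicit in your second part is the trivial sub-case $J_1=\emptyset$ with $J_2\cap\supp(\lambda)=\emptyset$, where the bracket identity degenerates but the conclusion is immediate since $J=J_2\subseteq I_{l''}$.
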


\begin{proof}
	Let $\partial I_k = \{l,m\}$. Then, if $\alpha\in R_1$, we have $\alpha=\beta+\gamma$ with $\rupp(\beta)\subseteq I_l$ and $\rupp(\gamma)\subseteq I_m$. By definition of $M_k(\lambda)$,
	\begin{equation*}
	x_{\beta,1}^-v = x_{\gamma,1}^-v = 0
	\end{equation*}
	and, hence,
	\begin{equation*}
	x_{\alpha,2}^-v = [x_{\beta,1}^-,x_{\gamma,1}^-]v = 0.
	\end{equation*}

	For the second statement, note there exists a connected subdiagram $J\subseteq[l_\lambda,i_*)$ such that 
	\begin{gather*}
	\rupp(\alpha)\subseteq I_l\cup J, \qquad \supp(\lambda)\cap J=\emptyset, \qquad\text{and}\qquad \alpha=\beta+\gamma \qquad\text{for some}\\
	\beta\in R_J^+\cup\{0\} \qquad\text{and}\qquad \gamma\in R_{I_l}^+\cup\{0\}.
	\end{gather*}  
	If either $\beta=0$ or $\gamma=0$, there is nothing else to do. Otherwise, since $J\cap\supp(\lambda)=\emptyset$, $x_\beta^-v = 0$ and it follows that
	\begin{equation}\label{e:x1=0}
	x_{\alpha,1}^-v = [x_{\beta}^-,x_{\gamma,1}^-]v = 0.
	\end{equation}
\end{proof}

Consider the basis $B:=\{x_{\alpha,r}^-: \alpha\in R^+, r\ge 0\}$ of $\lie n^-[t]$. Given a subset $S\subseteq B$ and a choice of total order on $S$, we denote by $U(S)$ the subspace of $U(\lie n^-[t])$ spanned by PBW monomials formed from elements of $S$. Let 
\begin{equation}
R_1^\lambda = \{\alpha\in R_1: I_k^\lambda\subseteq\rupp(\alpha)\}, \qquad R_{>1} = R^+\setminus R_1,
\end{equation}
\begin{equation}
S_1^\lambda = \{x^-_{\alpha,1}:\alpha\in R_1^\lambda\}, \quad\text{and}\quad S_{>1} = \{x^-_{\alpha,r}: \alpha\in R_{>1}, r>0\}.
\end{equation}
Note, by inspecting the root systems, that $\vartheta_{I_k^\lambda}$ is a minimal element of $R_{>1}\cup R_1^\lambda$.
Lemma \ref{l:remroots1} implies that
\begin{equation}
M_k(\lambda) = U(\lie n^-)U(S_{>1})U(S_1^\lambda) v.
\end{equation}
Standard arguments (cf. \cite[Lemma 2.3]{mou}) and the aforementioned minimality of $\vartheta_{I_k^\lambda}$ imply
\begin{equation*}
m_{\nu_k}^s(M_k(\lambda))\le \delta_{s,1}.
\end{equation*}
Then, if $\bs{\omega}\in\mathcal P^+_q$ is $l$-minimal for $l\in\partial I_k$ and $\wt(\bs{\omega})=\lambda$, it follows from Lemmas \ref{l:M>>L} and \ref{lema multi subdiagrama} that
\begin{equation*}
m_{\nu_k}(V_q(\bs{\omega}))\le 1.
\end{equation*}
This recovers the first part of \eqref{e:uppbnunuk} and, moreover, if $\bs{\omega}$ is not $k$-minimal, Proposition \ref{p:Anotmintheta} implies the first statements of parts (a) and (b) of Proposition \ref{p:outerM}. Moreover, the only PBW monomial $x\in U(S_{>1})U(S_1^\lambda)$ such that $xv$ has weight $\nu=\lambda-\vartheta_{I^\lambda}$ is clearly $x=x_{\vartheta_{I^\lambda},1}^-$. Therefore,
\begin{equation}\label{e:uppbnu}
m_\nu^s(M_k(\lambda))\le \delta_{s,1}.
\end{equation}
In particular, if $V$ is as in Proposition \ref{p:outer}, 
\begin{equation}\label{e:4n-2ub}
\dim(V(\lambda)_\nu) + \dim(V(\nu_k)_\nu) \le \dim(V_\nu) \stackrel{\eqref{e:uppbnu}}{\le} \dim(V(\lambda)_\nu) + \dim(V(\nu_k)_\nu) + 1,
\end{equation}	
where the first inequality follows from parts (a) and (b) of Proposition \ref{p:outer}. Regarding the proof of Proposition \ref{p:outerM}, it remains to prove the second statement of part (b) as well as part (c). The remainder of this subsection is dedicated to the latter.

\begin{rem}
	If $V$ is a KR-module associated to the node $i\in I$ and $v$ is the image of its highest-weight vector in the graded limit, a similar argument to the above proves
	\begin{equation*}
	x_{\alpha,1}^-v= 0 \qquad\text{if}\qquad \het_i(\alpha)\le 1
	\end{equation*}
	which implies \eqref{e:ifKR} (cf. \cite{cha:fer,cm:kr,jap:rem,mou,mope:mine6}). \endd
\end{rem}

For the remainder of this subsection, assume the hypotheses of Lemma \ref{l:<nuk}.

\begin{lem}\label{l:remroots}
	Let $\alpha\in R^+$. If there exists $l\in\partial I_k$ such that $l_\lambda\notin \rupp(\alpha)$, $x_{\alpha,1}^-v=0$.\endd
\end{lem}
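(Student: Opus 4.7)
The plan is to prove Lemma \ref{l:remroots} by induction on the height of $\alpha$, with Lemma \ref{l:remroots1} serving as the base case. For $\alpha\in R_1$ the conclusion is immediate from Lemma \ref{l:remroots1}, so the real work concerns $\alpha\in R_{>1}$, which can occur only outside type $A$. First I would observe that $\rupp(\alpha)$ is connected (since $\lie g$ is simply laced), so the hypothesis $l_\lambda\notin\rupp(\alpha)$ forces $\rupp(\alpha)$ to lie entirely in one of the two components of $I\setminus\{l_\lambda\}$. If $\rupp(\alpha)\subseteq (l_\lambda,l]$, then $\rupp(\alpha)$ is of type $A$ and contained in $I_m$ for the other element $m\in\partial I_k$, so $\alpha=\vartheta_{\rupp(\alpha)}$ and $x_{\alpha,1}^-v=0$ follows directly from the defining relations \eqref{e:defMk} of $M_k(\lambda)$. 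The substantive case is therefore $\rupp(\alpha)\subseteq I\setminus[l_\lambda,l]$, a subdiagram containing $i_*$.

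For such $\alpha$, I would look for a simple root $\alpha_j$ with $j\in(l_\lambda,i_*]$ such that $\gamma:=\alpha-\alpha_j\in R^+$. By the definition of $l_\lambda$ together with the standing hypothesis $i_*\notin\supp(\lambda)$, every node in $(l_\lambda,i_*]$ lies outside $\supp(\lambda)$, so $\lambda(h_j)=0$ and therefore $x_{\alpha_j}^-v=0$ in $M_k(\lambda)$. Using the commutator identity $x_{\alpha,1}^-=\pm[x_{\alpha_j}^-,x_{\gamma,1}^-]$ coming from \eqref{e:nonormrv}, and provided $\gamma$ still satisfies the lemma's hypothesis (i.e.\ $l'_\lambda\notin\rupp(\gamma)$ for some $l'\in\partial I_k$), the inductive hypothesis yields $x_{\gamma,1}^-v=0$, whence $x_{\alpha,1}^-v=0$.

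The two technical points to verify at each inductive step are (a) that $\alpha-\alpha_j\in R^+$, i.e., $j$ is peelable, and (b) that $\gamma$ still satisfies the hypothesis. Under \textbf{hypothesis (i)} (type $D$), the roots in $R_{>1}$ have the explicit form $e_p+e_r$ with $p<r\le i_*$; either $j=i_*$ or $j$ the smallest node of $\rupp(\alpha)$ carrying coefficient $2$ can be peeled off, producing a $\gamma$ in the same explicit family with $l_\lambda$ still absent from its support. Under \textbf{hypothesis (ii)} ($I_k^\lambda$ of type $A_3$), $I^\lambda$ is of type $D_4$, and a reduction via Lemma \ref{lema multi subdiagrama} leaves only the single $R_{>1}$ root of $D_4$, which admits the trivial decomposition $\alpha=\alpha_{i_*}+(\alpha-\alpha_{i_*})$. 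Under \textbf{hypothesis (iii)} ($E_6$ with $\supp(\lambda)=\partial I$), the finite list of $R_{>1}$ roots of $E_6$ meeting the condition $l_\lambda\notin\rupp(\alpha)$ for some $l\in\partial I_k$ is short enough to be checked case by case.

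The main obstacle will be the combinatorial bookkeeping of roots in type $E$ outside these three hypotheses: for certain $\alpha\in R_{>1}$, every peel-off $\alpha_j$ with $j\notin\supp(\lambda)$ yields a $\gamma$ for which $l_\lambda\in\rupp(\gamma)$, breaking the induction. This is precisely the technical obstruction alluded to in the paragraph immediately preceding the lemma, and it explains why the extra hypotheses (i)--(iii) appear here (and are inherited by Theorem \ref{t:cohnotmin}) even though they are likely unnecessary for the validity of the theorem itself.
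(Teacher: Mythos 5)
Your overall strategy is the same as the paper's: write $\alpha=\beta+\gamma$ with $x_\beta^-v=0$ because $\rupp(\beta)\cap\supp(\lambda)=\emptyset$ and $x_{\gamma,1}^-v=0$ by the $R_1$ case, then apply the commutator identity as in \eqref{e:x1=0}. The only real difference is that the paper does this in a single step, taking $\beta$ to be one (generally non-simple) positive root supported in $[i_*,l]$ away from $\supp(\lambda)$ and $\gamma\in R_1$, whereas you iterate it by peeling one simple root at a time. That iteration is fine in principle, and your type $D$ analysis via the roots $e_p+e_r$ goes through (though peeling $\alpha_{i_*}$ does not produce a root there; only the ``smallest node with coefficient $2$'' option works).

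There are, however, two concrete problems. First, your treatment of hypothesis (ii) would fail as written: Lemma \ref{lema multi subdiagrama} is a statement about outer multiplicities of $U_q(\tlie g)$-modules and gives no reduction of the set of roots $\alpha\in R^+$ of the ambient algebra for which the relation $x_{\alpha,1}^-v=0$ must be verified in $M_k(\lambda)$; and for the highest root of the $D_4$ subsystem the peel $\alpha=\alpha_{i_*}+(\alpha-\alpha_{i_*})$ leaves $\gamma=\vartheta_{I^\lambda}$, whose support contains $l_\lambda$ for every $l$, so neither Lemma \ref{l:remroots1} nor your inductive hypothesis applies to it. The correct observation (which your own first reduction already yields) is that under (ii) each $l_\lambda$ is adjacent to $i_*$, so a connected $\rupp(\alpha)$ avoiding $l_\lambda$ lies either in $(l_\lambda,l]\subseteq I_m$ or in $I\setminus[l_\lambda,l]=I_l$, both of type $A$; hence $\alpha=\vartheta_{\rupp(\alpha)}$ and \eqref{e:defMk} finishes — no $R_{>1}$ root ever satisfies the hypothesis in this case. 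Second, the obstruction described in your last paragraph cannot occur: since $\rupp(\alpha-\alpha_j)\subseteq\rupp(\alpha)$, the condition $l_\lambda\notin\rupp(\gamma)$ is inherited automatically at every step. The genuine obstruction (both to your induction and to the paper's one-step decomposition in general type $E$) is the existence at each stage of a peelable simple root $\alpha_j$ with $j\notin\supp(\lambda)$; note that restricting to $j\in(l_\lambda,i_*]$ is already too narrow in $E_6$ — for $\alpha=\alpha_2+\alpha_3+2\alpha_4+2\alpha_5+\alpha_6$ with $l=1$ the only admissible peel is $j=5\notin(l_\lambda,i_*]$, which is still legitimate since all that is used is $\lambda(h_j)=0$.
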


\begin{proof}
	If $I_k^\lambda$ is of type $A_3$, it follows that $\rupp(\alpha)\subseteq I_l$ and there is nothing to do. Hence, we can assume either hypothesis (i) or (iii) of Lemma \ref{l:<nuk} is satisfied. In particular, $n>4$ and, if $\lie g$ is of type $D$, $k$ is a spin node while $l$ is not a spin node.
	
	Assume $\alpha\in R_a$. Since the case $a=1$ was proved in Lemma \ref{l:remroots1}, we also assume $a\ge 2$. If $\lie g$ is of type $D$,  $\alpha=\beta+\gamma$ with $\beta,\gamma\in R^+$ such that $\rupp(\beta)\subseteq [i_*,l], \rupp(\beta)\cap\supp(\lambda)=\emptyset$, and $\gamma\in R_1$. Since $l_\lambda\notin \rupp(\gamma)$, the case $a=1$ implies that $x_{\gamma,1}^-v=0$ and we are done using \eqref{e:x1=0} once more.
	We are left with the case that hypothesis  (iii) of Proposition \ref{p:outer} is satisfied, i.e., $\lie g$ is of type $E_6$ and $\supp(\lambda)=\partial I$. Recall that, for type $E_6$, we have  
	\begin{equation*}
	R_a\ne\emptyset \quad\Leftrightarrow\quad a\le  3 \qquad\text{and}\qquad a=3 \quad\Rightarrow\quad \rupp(\alpha)=I.
	\end{equation*}
	Hence, we must have $a=2$. An inspection of the root system shows that $\alpha=\beta+\gamma$ with $\beta\in(m,i_*]$ for some $m\in\partial I_k$ and $\gamma\in R_1$ such that $l_\lambda\notin\rupp(\gamma)$. In particular, $\rupp(\beta)\cap\supp(\lambda)=\emptyset$ and \eqref{e:x1=0} completes the proof as before.	
\end{proof}

Let 
\begin{equation}
R^\lambda = \{\alpha\in R^+: I_k^\lambda\subseteq\rupp(\alpha)\} \quad\text{and}\quad S^\lambda_{>1} = \{x^-_{\alpha,r}: \alpha\in R^\lambda\setminus R_1, r>0\}.
\end{equation}
This time we obviously have
\begin{equation}
\vartheta_{I_k^\lambda} = \min R^\lambda
\end{equation}
while Lemma \ref{l:remroots} implies that
\begin{equation}\label{e:remainrootshyp}
M_k(\lambda) = U(\lie n^-)U(S_{>1}^\lambda)U(S_1^\lambda) v.
\end{equation}
Another application of  \cite[Lemma 2.3]{mou} proves
\begin{equation*}
\mu\in P^+,\ \mu<\lambda,\ m_\mu(M_k(\lambda))\ne 0 \quad\Rightarrow\quad \mu\le \nu_k.
\end{equation*}
This, together with Lemmas \ref{l:M>>L} and \ref{lema multi subdiagrama}, implies Lemma \ref{l:<nuk} and part (c) of Proposition \ref{p:outerM}.

\begin{rem}\label{r:cpconj}
	One can proceed with the methods used in the proof of \cite[(5-10)]{mou} to prove that, if $\bs{\omega}_{I^\lambda}$ is coherent, then 
	$$m_{\lambda-s\vartheta_{I^\lambda_k}}(V_q(\bs{\omega}))=m_{\lambda-s\vartheta_{I^\lambda_k}}^s(M_k(\lambda))=1$$ for all $1\le s\le m=\min\{\lambda(h_i): i\in I_k^\lambda\}$ and $m_\nu(V_q(\bs{\omega})) = m_\nu^1(M_k(\lambda))=1$. This would complete the proof of Proposition \ref{p:outerM} as well as of part (c) of Proposition \ref{p:outer}.  However, since the proof of  part (d) of Proposition \ref{p:outer} along the same lines is still unclear to us (cf. Remark \ref{r:demazure}), we will not go in that direction here. Instead, we will give proofs for both cases within the same spirit using qcharacters. 
	
	We recall that \cite[(5-10)]{mou} is  a formula for all outer multiplicities of $L(\bs{\omega})$ for type $D_4$ whose proof implies  the validity of Conjecture \ref{cj:grlim} in that case. It was claimed in the closing remark of \cite{mou} that similar arguments  implied that equation (5-10) also gave the outer multiplicities in the case that $\bs{\omega}$ is incoherent. Part (d) of Proposition \ref{p:outer} implies this is false. The one step that was overlooked in the closing remark of \cite{mou} was the proof of the existence of the second surjective map in the statement of the corresponding incoherent analogue of \cite[Proposition 5.14]{mou}. That map actually does not exist in the case $\bs{\omega}$ is incoherent while it is easily seen to exist if $\bs{\omega}$ is coherent using \cite[Corollary 4.4]{mou}.\endd
\end{rem}

\subsection{Incoherent Tensor Products of Boundary KR-Modules}\label{ss:tpinc}
Continuing our preparation to prove the last two parts of Proposition \ref{p:outer}, we will conduct a partial study of the simple factors of 
\begin{equation}
W=\bigotimes_{i\in\partial I} V_q(Y_{i,r_i,\lambda_i}),
\end{equation}
for certain choices of the parameters  $r_i,\lambda_i$ (recall \eqref{e:Ynotation}). Namely, setting
\begin{equation*}
\bs{\omega} = \prod_{i\in\partial I} Y_{i,r_i,\lambda_i},
\end{equation*}
we study $W$ in the cases that ${\rm mo}(\bs{\omega})=2$ and $\lambda_i\ne 0$ for all $i\in\partial I$. We treat the case that $\bs{\omega}$ is incoherent here, leaving the coherent case to Section \ref{ss:tpcoh}. As usual, we let $k\in\partial I$ be the node such that $\bs{\omega}$ is not $k$-minimal.

The incoherence of $\bs{\omega}$ implies there exists a unique choice of $l,m\in\partial I_k$ such that \footnote{All results of this section remain valid if $\lambda_k=0$ as long as one defines $r_k$ as in the second equality of \eqref{e:inccenterrelr}.}
\begin{equation}\label{e:inccenterrelr}
r_l = r_k+2\lambda_k+d(k,l) \quad\text{and}\quad r_k = r_m+2\lambda_m+d(k,m).
\end{equation}
In particular,
\begin{equation*}
  r_m<r_k<r_l.
\end{equation*}
Recall \eqref{e:KR(J)} and \eqref{e:KR(J)top} and set
\begin{equation}
\bs{\omega}_l = Y_{m,r_m,\lambda_m}(I_l)Y_{k,r_k,\lambda_k}Y_{l,r_l,\lambda_l} \quad\text{and}\quad
\bs{\omega}_m = Y_{m,r_m,\lambda_m}Y_{k,r_k,\lambda_k}(I_m)Y_{l,r_l,\lambda_l}
\end{equation}
Recalling that $I_l=[m,k]$ and $I_m=[l,k]$, one easily checks using \eqref{e:inccenterrelr} that
\begin{gather}\notag
\bs{\omega}_l = Y_{m,r_m,\lambda_m-1}Y_{k,r_k+2,\lambda_k-1} Y_{l,r_l,\lambda_l} Y_{l_*,r_m+2(\lambda_m-1)+d(l_*,m)}\\ \text{and}\\ \notag
\bs{\omega}_m = Y_{m,r_m,\lambda_m}Y_{k,r_k,\lambda_k-1} Y_{l,r_l+2,\lambda_l-1} Y_{m_*,r_k+2(\lambda_k-1)+d(m_*,k)}.
\end{gather}
Here $l_*$ is the element of $(i_*,l]$ closest to $i_*$ and similarly for $m_*$.
In particular, $\bs{\omega}_m,\bs{\omega}_l\in\mathcal P_q^+$. 
Let $\lambda=\wt(\bs{\omega})$ and $\nu=\lambda-\vartheta_I$.

\begin{lem}\label{l:tripletpinc}
	Let $\bs{\varpi}\in\wtl(W)$.
	\begin{enumerate}[(a)]
		\item If $\wt(\bs{\varpi})\ge\nu$, $\dim(W_{\bs{\varpi}})=1$. In particular, $\dim(W_\nu)=\#\wtl(W_\nu)$.
		\item If $\bs{\varpi}\in\mathcal P_q^+\setminus\{\bs{\omega},\bs{\omega}_m,\bs{\omega}_l\}$ and $V_q(\bs{\varpi})$ is an irreducible factor of $W$,  $\wt(\bs{\varpi})\ngeq\nu$. 
	\end{enumerate}
\end{lem}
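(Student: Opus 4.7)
The plan is to analyze $\wtl(W)$ in the range $\wt(\bs{\varpi})\ge\nu$ by combining the tensor product qcharacter formula \eqref{e:qctp} with the description of $\ell$-weights of KR-modules from Section \ref{ss:KR}. Combining \eqref{e:ifKR}, \eqref{e:KRupb}, \eqref{e:KRdim}, and Lemma \ref{l:qcKR1}, the $\ell$-weights $\bs{\varpi}_i\in\wtl(V_q(Y_{i,r_i,\lambda_i}))$ with $\wt(\bs{\varpi}_i)\ge\lambda_i\omega_i-\vartheta_I$ are exactly $Y_{i,r_i,\lambda_i}(J_i)$ for $J_i$ a connected subdiagram with $i\in J_i$, or $J_i=\emptyset$, each with $\ell$-weight multiplicity $1$. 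Thus \eqref{e:qctp} implies that every $\bs{\varpi}\in\wtl(W)$ with $\wt(\bs{\varpi})\ge\nu$ admits a decomposition
\[
\bs{\varpi}=\prod_{i\in\partial I} Y_{i,r_i,\lambda_i}(J_i),
\]
where $(J_i)_{i\in\partial I}$ is a family of pairwise disjoint connected subdiagrams, each either empty or containing $i$; moreover $\dim W_{\bs{\varpi}}$ equals the number of such decompositions.

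For part (a), it remains to show the decomposition is unique. I inspect the $Y_j$-content of each $Y_{i,r_i,\lambda_i}(J_i)$ via \eqref{e:KR(J)}: for $i\in\partial I$ the lowest-index $Y_i$-factor in $\bs{\varpi}$ must come from $Y_{i,r_i,\lambda_i}(J_i)$ (since the only other contributions of $Y_i$-factors are right-negative ones $Y_{i,r_{i'}+2\lambda_{i'}+d(i,i')}^{-1}$ with $i\ne i'$, which occur at strictly larger positions by \eqref{e:inccenterrelr}). This allows one to read off $J_i$ from the $Y_i$-content of $\bs{\varpi}$, proving injectivity of the decomposition and hence $\dim W_{\bs{\varpi}}=1$. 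The ``in particular'' statement then follows from the decomposition $W_\nu=\bigoplus_{\bs{\varpi}:\wt(\bs{\varpi})=\nu}W_{\bs{\varpi}}$.

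For part (b), any irreducible factor $V_q(\bs{\varpi})$ of $W$ has its highest $\ell$-weight $\bs{\varpi}$ dominant and lying in $\wtl(W)$, so I only need to identify the dominant $\bs{\varpi}$'s among the decompositions above. From \eqref{e:KR(J)}, $Y_{i,r_i,\lambda_i}(J_i)$ with $J_i\ne\emptyset$ contributes right-negative factors $Y_{j,r_i+2\lambda_i+d(j,i)}^{-1}$ for $j\in\partial_iJ_i$; each must be killed by a positive $Y_j$-factor from some $Y_{i',r_{i'},\lambda_{i'}}(J_{i'})$, $i'\ne i$. The incoherence relations \eqref{e:inccenterrelr} open exactly two cancellation channels: $Y_{k,r_k}^{-1}$ (which appears precisely when $J_m=I_l$, using $r_k=r_m+2\lambda_m+d(k,m)$) is cancelled by $Y_{k,r_k}$ from $Y_{k,r_k,\lambda_k}$, forcing $J_k=\emptyset$; and symmetrically $Y_{l,r_l}^{-1}$ (appearing when $J_k=I_m$, using $r_l=r_k+2\lambda_k+d(k,l)$) is cancelled by $Y_{l,r_l}$, forcing $J_l=\emptyset$. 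The two channels are mutually exclusive since $I_l\cap I_m\ni k$. A case-by-case inspection, using that the $r_i$'s differ exactly by the prescribed amounts in \eqref{e:inccenterrelr}, shows that any other nonempty $J_i$ produces a negative factor at a position never occupied by a positive factor coming from the remaining tensorands. Hence the dominant $\bs{\varpi}$'s with $\wt(\bs{\varpi})\ge\nu$ are exactly $\bs{\omega}$, $\bs{\omega}_m$, $\bs{\omega}_l$, as claimed.

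The main obstacle is the dominance analysis in part (b) for type $E$, where the subdiagrams $J_i$ have more possibilities and one must verify that no accidental coincidences among the positions $r_i+2\lambda_i+d(\cdot,i)$ produce spurious cancellations beyond the two channels singled out by \eqref{e:inccenterrelr}.
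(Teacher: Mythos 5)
Your setup is the same as the paper's: identify the $\ell$-weights of $W$ above $\nu$ with families $J=(J_i)_{i\in\partial I}$ of disjoint connected subdiagrams via $\bs{\varpi}=\prod_i Y_{i,r_i,\lambda_i}(J_i)$, reduce (a) to injectivity of $J\mapsto\bs{\omega}(J)$ and (b) to finding the dominant $\bs{\omega}(J)$. The gap is in the cancellation analysis that both reductions hinge on. Your justification for injectivity --- that the only $Y_i$-contributions from the other tensorands are negative factors at strictly larger positions, so the lowest-index $Y_i$-factor identifies $J_i$ --- is false on two counts. First, you omit the \emph{positive} contributions $Y_{i,r_{i'}+2(\lambda_{i'}-1)+d(i,i')}$ arising when $i\in J_{i'}^+$; e.g.\ if $J_k=(l,k]$ this puts a positive $Y_l$-factor at position $r_l-2<r_l$. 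Second, the negative factors are not all at strictly larger positions: by \eqref{e:inccenterrelr} the factor $Y_{l,r_k+2\lambda_k+d(l,k)}^{-1}$ sits at exactly $r_l$ (this is precisely the cancellation that makes $\bs{\omega}_l$ dominant), and $Y_{l,r_m+2\lambda_m+d(l,m)}^{-1}$ sits at $r_l-2\lambda_k-2d(k,i_*)<r_l$. Moreover, even where the claim holds, the $Y_i$-content alone does not determine the set $J_i$, which is encoded in the $Y_s$-factors for $s\in\partial_iJ_i\cup J_i^+$. The correct argument (the paper's) tracks, for every $s\in\partial_aJ_a$ with $a\in\partial I$, whether $Y_{s,r_a+2\lambda_a+d(s,a)}^{-1}$ survives in the full product, and shows via \eqref{e:inccenterrelr} that it does except exactly when $(s,a)\in\{(l,k),(k,m)\}$, in which case the cancellation shifts the $Y_s$-string in a way that still pins down $a$; this is what yields $\partial J_a=\partial J'_a$ for all $a$.

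For part (b) the same uncancelled-negative-factor statement is the entire content: dominance of $\bs{\omega}(J)$ forces every negative factor to cancel, hence forces $J$ into one of the two channels. You assert this (``a case-by-case inspection\dots shows that any other nonempty $J_i$ produces a negative factor at a position never occupied by a positive factor'') and then flag it yourself as the unresolved main obstacle, so the proof is not complete. There is also a small slip: $Y_{k,r_k}^{-1}$ appears whenever $k\in J_m$, i.e.\ whenever $J_m\supseteq I_l$, not only when $J_m=I_l$; ruling out $J_m\supsetneq I_l$ (and likewise $J_k\supsetneq I_m$, and nonempty $J_l$ in the first channel, via right-negativity) is again exactly the survival analysis that is missing. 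In short: right strategy, but the load-bearing combinatorial verification \eqref{e:tripletpincnotdom} is replaced by an incorrect shortcut in (a) and an unexecuted inspection in (b).
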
	

\begin{proof}
	Set	
	\begin{equation*}
	D=\{\bs{\mu}\in\wtl(W):\wt(\bs{\mu})\ge \nu\}
	\end{equation*}
	and, as in the proof of \eqref{e:dimWnu}, let $\mathscr J$ be the set of triples $(J_i)_{i\in\partial I}$ of disjoint connected subdiagrams of $I$  satisfying
	\begin{equation*}
	i\notin J_i \quad\Leftrightarrow\quad J_i=\emptyset.
	\end{equation*} 	
	Given $J\in\mathscr J$, define
	\begin{equation}
	\bs\omega(J) = \prod_{i\in\partial I} Y_{i,r_i,\lambda_i}(J_i) \qquad\text{and}\qquad \supp(J)=\{i\in\partial I:J_i\ne\emptyset\}.
	\end{equation}
	It easily follows from \eqref{e:qctp} and Lemma \ref{l:qcKR1} that
	\begin{equation}\label{e:tripletpD}
	D = \{\bs{\omega}(J): J\in\mathscr J\}. 
	\end{equation}
	Thus, part (a) is equivalent to showing that the map $\mathscr J\to\mathcal P_q, J\mapsto\bs{\omega}(J)$, is injective. In preparation for proving that as well as part (b), we first collect some information about the elements $\bs{\omega}(J)$. 
	
	If $\supp(J)=\emptyset$, there is nothing to do. Otherwise, there exists $s\in I$ such that $s\in\partial_aJ_a$ for some $a\in\partial I$. Evidently, given such $s$, $a$ is uniquely determined. Set 
	\begin{equation}\label{e:pi-J-s}
	\bs\pi(J,s)=  Y_{s,r_a+2\lambda_a+d(s,a)}^{-1} \left(\prod_{\substack{b\in\partial I_a:\\ d(s,J_b)=1}} Y_{s,r_b+2(\lambda_b-1)+d(s,b)}\right) \left(\prod_{b\in\partial I} (Y_{s,r_s,\lambda_s-\delta_{s,a}})^{\delta_{s,b}}\right),
	\end{equation}
	where $d(s,J_b)=\min\{d(s,i):i\in J_b\}$ if $J_b\ne \emptyset$ and $d(s,J_b)=\infty$ otherwise.
	By definition, the $s$-th entry of  $\bs{\omega}(J)$ coincides with that of $\bs\pi(J,s)$. 
	We begin by proving that
	\begin{equation}\label{e:tripletpincnotdom}
	  Y_{s,r_a+2\lambda_a+d(s,a)}^{-1} \text{ appears in } \bs{\omega}(J) \text{ unless } s\in\partial I_a \text{ and } (s,a)\in\{(l,k),(k,m)\}.
	\end{equation}
	Indeed, 
	\begin{equation*}
		s\notin\partial I \quad\Rightarrow\quad \bs\pi(J,s) = Y_{s,r_a+2\lambda_a+d(s,a)}^{-1} \left(\prod_{\substack{b\in\partial I_a:\\ d(s,J_b)=1}} Y_{s,r_b+2(\lambda_b-1)+d(s,b)}\right)
	\end{equation*}
	and, hence, $Y_{s,r_a+2\lambda_a+d(s,a)}^{-1}$ does not appear if and only if there exists $b\in\partial I_a$ such that $d(s,J_b)=1$ and
	\begin{equation}\label{e:tripletpincnotdomneg}
	 r_b+2(\lambda_b-1)+d(s,b) = r_a+2\lambda_a+d(s,a).
	\end{equation}
	Note that \eqref{e:tripletpincnotdomneg} implies 
	\begin{equation}\label{e:rb<ra<ra}
	   r_b < r_a+2\lambda_a+d(a,b) \qquad\text{and}\qquad r_a< r_b+2\lambda_b+d(a,b).
	\end{equation}
	Indeed, 	
	\begin{equation*}
	r_b<r_b+2(\lambda_b-1)+d(s,b) \stackrel{\eqref{e:tripletpincnotdomneg}}{=}r_a+2\lambda_a+d(s,a) <r_a+2\lambda_a+d(a,b)
	\end{equation*}
	and
    \begin{equation*}
	r_a<r_a+2\lambda_a+d(s,a) \stackrel{\eqref{e:tripletpincnotdomneg}}{=}r_b+2(\lambda_b-1)+d(s,b) <r_b+2\lambda_b+d(a,b).
	\end{equation*}  
	However, \eqref{e:rb<ra<ra} contradicts \eqref{e:inccenterrelr}, thus proving \eqref{e:tripletpincnotdom} when $s\notin\partial I$. Indeed, the contradiction is clear if  $(a,b)\in \{(l,k),(k,l),(k,m),(m,k)\}$, while  
	\begin{equation*}
	 \begin{aligned}
	 r_l &  \stackrel{\eqref{e:inccenterrelr} }{=}  r_m+2\lambda_m+2\lambda_k+d(k,m)+d(k,l)= r_m+2\lambda_m+2\lambda_k+d(m,l)+2d(k,i_*)\\  & >r_m+2\lambda_m+d(m,l),
	 \end{aligned} 
	\end{equation*}
	revealing the contradiction with \eqref{e:rb<ra<ra}.
   
	 On the other hand, 
	\begin{equation*}
	s\in\partial I \quad\Rightarrow\quad \bs\pi(J,s) =  Y_{s,r_a+2\lambda_a+d(s,a)}^{-1} Y_{s,r_s,\lambda_s-\delta_{s,a}},
	\end{equation*}
	and, hence, there will be a cancellation if, and only if,
	\begin{equation*}
	\exists\ 0\le p< \lambda_s-\delta_{s,a} \quad\text{such that}\quad r_s+2p = r_a+2\lambda_a+d(s,a). 
	\end{equation*}
	But then,
	\begin{equation*}
	 r_s = r_a+2\lambda_a+d(s,a)-2p\le r_a+2\lambda_a+d(s,a),
	\end{equation*}
	and \eqref{e:inccenterrelr} implies that $p=0$ and $(s,a)\in\{(l,k),(k,m)\}$ completing the proof of \eqref{e:tripletpincnotdom}. Moreover,
	\begin{equation}
	  (s,a)\in\{(l,k),(k,m)\} \quad\Rightarrow\quad \bs{\pi}(J,s) = Y_{s,r_s,\lambda_s-1}.
	\end{equation}
	
	Let $J,J'\in\mathscr J$ be such that
	\begin{equation*}
	 \bs{\omega}(J)= \bs{\omega}(J').
	\end{equation*}
	To see that $J=J'$, thus proving part (a), we will show that
	\begin{equation}\label{e:tripletpinc=}
	\partial J_a = \partial J'_a \qquad\text{for all}\qquad a\in\partial I.
	\end{equation}
	Let $s\in\partial_aJ_a$ for some $a\in\partial I$. If $Y_{s,r_a+2\lambda_a+d(s,a)}^{-1}$ appears in $\bs{\omega}(J)$, then $s\in\partial_b J'_b$ for some $b\in\partial I$ and 
	\begin{equation*}
	  r_a+2\lambda_a+d(s,a) = r_b+2\lambda_b+d(s,b).
	\end{equation*} 
	If it were $a\ne b$, this would imply  \eqref{e:rb<ra<ra}, which is a contradiction as seen before. Hence, we must have $b=a$. If $Y_{s,r_a+2\lambda_a+d(s,a)}^{-1}$ does not appear in $\bs{\omega}(J)$, then $(s,a)\in\{(l,k),(k,m)\}$ and $\bs{\pi}(J',s)=Y_{s,r_s,\lambda_s-1}$. The latter implies that $s\in\partial_bJ'_b$ for some $b\in\partial I$ and $(s,b)\in\{(l,k),(k,m)\}$. Hence, $b=a$, completing the proof of \eqref{e:tripletpinc=}.

	We now show that
	\begin{equation}\label{e:D+}
	D\cap\mathcal P_q^+ = \{\bs{\omega},\bs{\omega}_m,\bs{\omega}_l\},
	\end{equation}
	which proves part (b). Let $J\in\mathscr J$ be such that $\bs{\omega}(J)\in\mathcal P_q^+$ and assume $\supp(J)\ne\emptyset$. It follows from \eqref{e:tripletpincnotdom} that
	\begin{equation}
	 \text{either}\quad [k,m]= J_m \quad\text{or}\quad [l,k]= J_k.
	\end{equation}
	For the former, it follows that $J_k=\emptyset$ and
	\begin{equation*}
	 Y_{k,r_k,\lambda_k}(J_k)Y_{m,r_m,\lambda_m}(J_m)\in\mathcal P_q^+.
	\end{equation*}
	Since $Y_{l,r_l,\lambda_l}(J_l)$ is right negative if $J_l\ne\emptyset$, it follows that $\bs{\omega}(J) = \bs\omega_l$. Similarly, if $[l,k]= J_k$, it follows that $\bs{\omega}(J) = \bs\omega_m$.
\end{proof}

 We prove next that
 \begin{equation}\label{e:4n-2final}
 \dim(V_q(\bs{\omega}_l)_\nu) = d(l_*,l)+2 \qquad\text{and}\qquad \dim(V_q(\bs{\omega}_m)_\nu) =  d(m_*,m) + 2,
 \end{equation}
 Plugging $l_*$ in place of of $i$ and $l$ in place of $j$ in Lemma \ref{l:W(2)_0}, the first equality follows provided 
 \begin{equation}\label{e:W(2)_0check}
 r_l - (r_m+2(\lambda_m-1)+d(l_*,m))\ne 2 + d(l_*,l).
 \end{equation}
 But \eqref{e:inccenterrelr} implies the left-hand-side is strictly larger than the right-hand-side.
 For the second statement in  \eqref{e:4n-2final}, we plug $m$ in place of $i$ and $m_*$ in place of $j$ in Lemma \ref{l:W(2)_0} and check, using \eqref{e:inccenterrelr}, that 
 \begin{equation*}
 r_k+2(\lambda_k-1)+d(m_*,k)-r_m\ne 2\lambda_m + d(m_*,m).
 \end{equation*}

\begin{rem}
 	For $\lie g$ of type $D$ and assuming that both elements of $\partial I_k$ are spin nodes, it was proved in \cite[Lemma 4.6.3]{fer:thesis} that  $V_q(\bs{\omega})$ is $\ell$-minuscule. This implies that the qcharacter of $V_q(\bs{\omega})$ can be computed by means of the FM algorithm. A sketchy use of the algorithm was then used to prove  \eqref{e:4n-2final} by performing a counting of $\ell$-weights. The argument presented here, replaces this counting by a combination of Lemma \ref{l:qcbasic}, which is part of the background of the FM algorithm, with the results of Section \ref{ss:parts}. It is interesting to note that the proof of \cite[Lemma 4.6.3]{fer:thesis} also relies on special cases of the results from 
 	Section \ref{ss:parts} whose proof in \cite[Lemma 6.1]{fer:thesis} was sketched by making use of Nakajima's monomial realization of Kashiwara's crystals $B(\lambda), \lambda\in P^+$. The proofs we gave in Section \ref{ss:parts} are completely classical. Although the  strategy developed here for proving Proposition \ref{p:outer}\eqref{p:outeri} does not rely on whether $V_q(\bs{\omega})$ is $\ell$-minuscule or not, it would be interesting to check if this is true in the generality we are working here. To keep the length of the present paper under reasonable limits, we shall leave this topic to a future work.\endd
\end{rem}

 We are ready for:

\begin{proof}[Proof of Proposition \ref{p:outer}(d)]
Since $m_\nu(V) = m_{\nu_{I^\lambda}}(V_q(\bs\omega_{I^\lambda}))$, we can assume $I^\lambda=I$ and, hence, $\supp(\lambda)=\partial I.$
 By \eqref{e:4n-2ub}, we have
\begin{equation*}
  \dim(V_\nu)  = \dim(V(\lambda)_\nu) + \dim(V(\nu_k)_\nu)+\xi \qquad\text{with}\qquad 0\le \xi\le 1,
\end{equation*}
and, after \eqref{e:4n-2}, we need to show that $\xi=0$. 
By \eqref{e:dimWnueachk},
\begin{equation*}
\dim(V(\nu_k)_\nu) = d(k,i_*),
\end{equation*}
while \eqref{e:dimWnu} implies
\begin{equation*}
\dim(W_\nu) = \dim(V(\lambda)_\nu) + n+1.
\end{equation*}
One the other hand, \eqref{e:dimWnuallk} is equivalent to
\begin{equation*}
d(k,i_*) = n-3 - d(l_*,l) - d(m_*,m)
\end{equation*}
and, hence,
\begin{align*}
\dim(W_\nu) - \dim(V_\nu) &= d(l_*,l) + d(m_*,m) + 4 -\xi \\& \stackrel{\eqref{e:4n-2final}}{=}  \dim(V_q(\bs{\omega}_m)_\nu)+\dim(V_q(\bs{\omega}_l)_\nu) - \xi > \dim(V_q(\bs{\omega}_i)_\nu)
\end{align*}
for $i=l,m$. Lemma \ref{l:tripletpinc} implies that $V, V_q(\bs{\omega}_m)$, and $V_q(\bs{\omega}_l)$ are the only possible irreducible factors of $W$ having $\nu$ as a weight. Thus, the above computation shows that both $V_q(\bs{\omega}_m)$ and $V_q(\bs{\omega}_l)$ are indeed irreducible factors of $W$ and, therefore, $\xi=0$.
\end{proof}

\begin{rem}
	At the end of the above proof, we have shown that both $V_q(\bs{\omega}_m)$ and $V_q(\bs{\omega}_l)$ are irreducible factors of $W$. It is interesting to observe that, in the case that $d(l,i_*),d(m,i*)>1$ (in particular $\lie g$ is of type $E$), \eqref{e:uppbnunuk} would suffice in the above proof without the need of the sharper \eqref{e:upbnu} and, hence, independently of the results of Section \ref{ss:upper}.  The same comments apply to the coherent case treated in the next subsection. \endd
\end{rem}

\subsection{Coherent Tensor Products of Boundary KR-Modules}\label{ss:tpcoh}

We recall the following well-known proposition which is easily proved by considering pull-backs by the automorphisms given by \cite[Propositions 1.5 and 1.6]{cha:minr2} together with dualization (cf. \cite[Section 4.1]{hmp:tpa}).
	
	\begin{prop}\label{prop inverte}
		Let $\lambda\in P^+, \bs\omega=\prod_{i\in I}\bs\omega_{i,a_i,\lambda(h_i)}$, and $\bs\varpi=\prod_{i\in I}\bs\omega_{i,b_i,\lambda(h_i)}$, with $a_i,b_i\in
		\C^\times$. If 	there exists $\varepsilon=\pm 1$ such that
		$$\frac{a_i}{a_j}=\left(\frac{b_j}{b_i}\right)^\varepsilon \qquad \text{for all} \qquad i,j\in I,$$
		then $V_q(\bs\omega)\cong_{U_q(\g)}V_q(\bs\varpi)$.\hfill\qedsymbol
	\end{prop}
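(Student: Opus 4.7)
The plan is to realize, for each of the two cases $\varepsilon=\pm 1$, the passage from $\bs\omega$ to $\bs\varpi$ as a pull-back along a suitable (anti-)automorphism of $U_q(\tlie g)$ whose restriction to $U_q(\g)$ is either the identity or an inner involution fixing the isomorphism class of every simple $U_q(\g)$-module. Once such a transformation is in hand, the uniqueness statement of Theorem \ref{t:weyl} identifies the pull-back of $V_q(\bs\omega)$ with $V_q(\bs\varpi)$ as $U_q(\tlie g)$-module, and the triviality of the restriction to $U_q(\g)$ immediately gives the desired $U_q(\g)$-isomorphism. The two required transformations are exactly those supplied by \cite[Propositions 1.5 and 1.6]{cha:minr2}: the first is a Hopf-algebra automorphism $\phi_c$, for any $c\in\mathbb C^\times$, which rescales the Drinfeld loop generators by appropriate powers of $c$, is the identity on $U_q(\g)$, and whose pull-back uniformly multiplies every spectral parameter by $c$; the second is an (anti-)automorphism whose effect on Drinfeld polynomials is to invert the spectral parameters, and whose restriction to $U_q(\g)$ is an involution that preserves $U_q(\g)$-isomorphism classes of finite-dimensional simple modules.

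First I would dispatch the case $\varepsilon=-1$: the hypothesis $a_i/a_j = b_i/b_j$ amounts to saying that the ratio $b_i/a_i$ is independent of $i$, so there exists a unique $c\in\mathbb C^\times$ with $b_i = c\, a_i$ for all $i\in I$. Pulling back $V_q(\bs\omega)$ by $\phi_c$ produces a highest-$\ell$-weight module whose highest $\ell$-weight is read off directly from the formula for $\bs\omega_{i,a_i,\lambda(h_i)}$ to be $\bs\varpi$; by Theorem \ref{t:weyl} this pull-back is $V_q(\bs\varpi)$ as $U_q(\tlie g)$-module, and since $\phi_c|_{U_q(\g)}$ is the identity it coincides with $V_q(\bs\omega)$ as $U_q(\g)$-module.

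For $\varepsilon=+1$ the hypothesis $a_i/a_j = b_j/b_i$ is equivalent to $a_i b_i$ being a constant $c\in\mathbb C^\times$ independent of $i$, i.e. $b_i = c/a_i$. Here I would first apply the second CP (anti-)automorphism to obtain a $U_q(\tlie g)$-module whose $\ell$-weight parameters are $a_i^{-1}$, and then compose with $\phi_c$ of the previous step to multiply these by $c$, reaching $b_i = c/a_i$. The combined operation again restricts trivially to $U_q(\g)$-isomorphism classes (the Chevalley-type involution used in \cite[Proposition 1.6]{cha:minr2} sends each $V_q(\lambda)$ to an isomorphic $U_q(\g)$-module), so a second application of Theorem \ref{t:weyl} yields $V_q(\bs\omega)\cong_{U_q(\g)} V_q(\bs\varpi)$.

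The main, and in fact only, obstacle is the bookkeeping in case $\varepsilon=+1$: one must verify that the composition of the CP (anti-)automorphism with the spectral shift $\phi_c$ really sends every parameter $a_i$ to $c/a_i$ (with no leftover Dynkin-diagram-automorphism twist, and no shift by $q^{\pm h^\vee}$ of the kind that appears in the dualization formula \eqref{e:drhw}), and that the composite restriction to $U_q(\g)$ preserves isomorphism classes. Both checks are routine applications of the explicit formulas in \cite[Propositions 1.5, 1.6]{cha:minr2}, essentially the ``dualization'' calculation recalled in \cite[Section 4.1]{hmp:tpa}; no deeper input is required.
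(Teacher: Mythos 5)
Your proposal is correct and follows exactly the route the paper itself indicates: the paper offers no proof beyond citing pull-backs along the automorphisms of \cite[Propositions 1.5 and 1.6]{cha:minr2} together with dualization, which is precisely your $\phi_c$ for $\varepsilon=-1$ and the parameter-inverting involution composed with $\phi_c$ for $\varepsilon=+1$. The only point to treat with care is the parenthetical claim that the Chevalley-type involution alone fixes the $U_q(\lie g)$-isomorphism class of $V_q(\lambda)$ --- by itself it sends $V_q(\lambda)$ to $V_q(-w_0\lambda)$, and it is only after composing with dualization (which you do invoke via the calculation of \cite[Section 4.1]{hmp:tpa}) that the class is preserved and the diagram automorphism $w_0\cdot$ cancels.
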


Let $W$ be as in Section \ref{ss:tpinc} but this time assume $\bs{\omega}$ is coherent. Thus, letting $k\in\partial I$ be the node such that $\bs{\omega}$ is not $k$-minimal, by Proposition \ref{prop inverte}, we may assume
\begin{equation}\label{e:cohcenterrelr}
r_l = r_k+2\lambda_k+d(k,l) \quad\text{for all}\quad l\in \partial I_k.
\end{equation}
Using \eqref{e:KR(J)}, set
\begin{equation*}
\bs{\omega}_l = Y_{k,r_k,\lambda_k}(I_l) \prod_{i\in\partial I_k}Y_{i,r_i,\lambda_i} \qquad\text{and}\qquad \bs{\omega}' = Y_{k,r_k,\lambda_k}(I) \prod_{i\in\partial I_k}Y_{i,r_i,\lambda_i}.
\end{equation*}
Since
\begin{equation*}
d(k,l) - d(k,l_*) = d(l,l_*),
\end{equation*}
\eqref{e:cohcenterrelr} implies
\begin{equation}
\bs{\omega}_l = Y_{k,r_k,\lambda_k-1} Y_{m,r_m+2,\lambda_m-1} Y_{l,r_l,\lambda_l}Y_{l_*,r_l-2-d(l,l_*)},
\end{equation}
where $m\in\partial I_k,m\ne l$, and
\begin{equation}
\bs{\omega}' = Y_{k,r_k,\lambda_k-1} Y_{i_*,r_k+2\lambda_k+d(i_*,k)}\prod_{i\in\partial I_k}Y_{i,r_i+2,\lambda_i-1}.
\end{equation}
If there exists $l\in\partial I_k$ such that 
\begin{equation}\label{e:rlmp}
r_l+2\lambda_l+d(l,m)=r_m+2p \quad \text{for some}\quad 0\le p<\lambda_m,
\end{equation} 
 where $m\in\partial I_k,m\ne l$, set also
\begin{equation*}
\begin{aligned}
\bs{\omega}'' & = Y_{k,r_k,\lambda_k} Y_{l,r_l,\lambda_l}(I_k)Y_{m,r_m,\lambda_m}\\
&  = Y_{k,r_k,\lambda_k}  Y_{k_*,r_l+2(\lambda_l-1) + d(k_*,l)}  Y_{l,r_l,\lambda_l-1}Y_{m,r_m,p}Y_{m,r_m+2(p+1),\lambda_m-p-1}.
\end{aligned}
\end{equation*}
Note that if such $l$ exists, it is unique.

\begin{lem}\label{l:tripletpcoh}
	Let $\bs{\varpi}\in\wtl(W)$.
	\begin{enumerate}[(a)]
		\item If $\bs{\varpi}\in\{\bs{\omega},\bs{\omega}',\bs{\omega}'',\bs{\omega}_i:i\in\partial I_k\}$, $\dim(W_{\bs{\varpi}})=1$.
		\item If $\wt(\bs{\varpi})\ge\nu$ and  $\bs{\varpi}\notin\{\bs{\omega},\bs{\omega}',\bs{\omega}'',\bs{\omega}_i:i\in\partial I_k\}$, then $\bs{\varpi}\notin\mathcal P_q^+$. In particular, if $V_q(\bs{\varpi})$ is an irreducible factor of $W$,  $\wt(\bs{\varpi})\ngeq\nu$. 
	\end{enumerate}
\end{lem}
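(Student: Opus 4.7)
The plan is to mirror the proof of Lemma \ref{l:tripletpinc}, adjusting the analysis to reflect the coherent relation \eqref{e:cohcenterrelr}. Combining Lemma \ref{l:qcKR1} with \eqref{e:qctp} one again obtains
\begin{equation*}
D := \{\bs{\mu} \in \wtl(W) : \wt(\bs{\mu}) \ge \nu\} = \{\bs{\omega}(J) : J \in \mathscr J\},
\end{equation*}
with $\bs{\omega}(J) = \prod_{i \in \partial I} Y_{i, r_i, \lambda_i}(J_i)$ and $\mathscr J$ as in the proof of \eqref{e:tripletpD}. The central computation is once more the analysis, for each $s \in \partial_a J_a$, of when the negative factor $Y^{-1}_{s, r_a + 2\lambda_a + d(s,a)}$ of $\bs{\pi}(J,s)$ from \eqref{e:pi-J-s} fails to cancel. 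The key difference with the incoherent case is that \eqref{e:cohcenterrelr} permits strictly more cancellations, which is what accounts for the enlarged list of dominant elements in the statement.

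For part (b), I would establish the coherent analogue of \eqref{e:tripletpincnotdom}. When $s \notin \partial I$, cancellation against another $Y_{s, r_b + 2(\lambda_b - 1) + d(s,b)}$ coming from some $b \in \partial I_a$ with $d(s, J_b) = 1$ requires $r_b + 2(\lambda_b - 1) + d(s,b) = r_a + 2\lambda_a + d(s,a)$; after substituting \eqref{e:cohcenterrelr}, the cases $(a,b) \in \{k\} \times \partial I_k$ and $(a,b) \in \partial I_k \times \{k\}$ turn out to be consistent and correspond exactly to configurations $J_k \supseteq \overline{\{k,s\}}$ producing the $\bs{\omega}_l$ and $\bs{\omega}'$ families, while the case $(a,b) \in \partial I_k \times \partial I_k$ is ruled out by the same contradiction argument used in the incoherent case. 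When $s \in \partial I$, cancellation against the positive factor $Y_{s, r_s, \lambda_s - \delta_{s,a}}$ requires $r_s + 2p = r_a + 2\lambda_a + d(s,a)$ for some $0 \le p < \lambda_s - \delta_{s,a}$; by \eqref{e:cohcenterrelr} this holds automatically with $p = 0$ when $(s,a) \in \partial I_k \times \{k\}$, recovering the $\bs{\omega}_l$ case with $J_k = I_l$, and when $(s,a) \in \partial I_k \times \partial I_k$ with $s \ne a$ it reduces precisely to the resonance condition \eqref{e:rlmp}, producing $\bs{\omega}''$. Synthesizing these cases shows that every $J$ with $\bs{\omega}(J)$ dominant and $\supp(J) \ne \emptyset$ gives one of $\bs{\omega}_l$, $\bs{\omega}'$, or $\bs{\omega}''$.

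For part (a), the multiplicity claim reduces to showing that each preimage $\mathscr J_{\bs{\varpi}}$ is a singleton. Following the argument around \eqref{e:tripletpinc=}, I would read off $\partial J_a$ from $\bs{\omega}(J)$ by locating the uncancelled negative factors and tracking where cancellations occur. The main obstacle I anticipate is the analysis of $\bs{\omega}'$, where $J_k = I$ produces simultaneous cancellations at both nodes of $\partial I_k$; one must verify that no alternative $J$ with $J_k$ a proper subdiagram and some $J_l$ or $J_m$ nontrivial can reproduce this doubly-cancelled pattern, which should follow from a bookkeeping argument exploiting the symmetry built into \eqref{e:cohcenterrelr}. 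The $\bs{\omega}''$ case also requires some care because cancellation occurs with a middle rather than leading factor of the KR module at $m$, so uniqueness of the preimage hinges on the uniqueness of $p$ in \eqref{e:rlmp}.
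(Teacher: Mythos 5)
Your overall strategy---reduce to \eqref{e:tripletpD} and decide, for each $J$ with $\supp(J)\ne\emptyset$, whether some negative factor $Y_{s,r_a+2\lambda_a+d(s,a)}^{-1}$ survives in $\bs{\omega}(J)$---is exactly the paper's, but your case analysis for interior $s$ is inverted, and the inversion conceals the hardest part of the proof. Under \eqref{e:cohcenterrelr} one has $r_l=r_k+2\lambda_k+d(k,l)$ for $l\in\partial I_k$, so it is precisely the pairs with $k\in\{a,b\}$ that contradict the strict inequalities \eqref{e:rb<ra<ra}: those cancellations are \emph{impossible}, not ``consistent,'' and they are not what produces $\bs{\omega}'$ and $\bs{\omega}_l$ (those arise from $\supp(J)=\{k\}$ with $J_k=I$ or $J_k=I_l$, where the cancellation happens at extremal nodes $s\in\partial I$ against $Y_{s,r_s,\lambda_s}$ with $p=0$). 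Conversely, the pair $(a,b)\in\partial I_k\times\partial I_k$ is \emph{not} ruled out by the incoherent contradiction: there $r_l-r_m=d(k,l)-d(k,m)$, which is compatible with both inequalities in \eqref{e:rb<ra<ra}, and \eqref{e:tripletpincnotdomneg} can genuinely hold for suitable $s$ and $\lambda$. For such $J$ the chosen negative factor really does cancel, and one must instead exhibit a surviving negative factor at some \emph{other} node $t$. That is what forces the subcase analysis occupying most of the paper's proof of \eqref{e:Yappears} (the reduction to $J_a\cup J_b=I_k$ or $k\in J_b$; the computation at $t\in\partial_bJ_b$; the residual case $J_b=I_a$, $J_a=(i_*,a]$ handled via \eqref{e:kpartialother}), none of which appears in your outline. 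As written, your argument would wrongly conclude that configurations supported on $\partial I_k$ never contribute, while failing to exclude the configurations supported at $k$ that genuinely cannot cancel.

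Your treatment of the $s\in\partial I$ cancellations is essentially correct: $p=0$ is automatic for $(s,a)\in\partial I_k\times\{k\}$, and $(s,a)\in\partial I_k\times\partial I_k$ reduces to the resonance \eqref{e:rlmp}, producing $\bs{\omega}''$. One further caution on part (a): the map $J\mapsto\bs{\omega}(J)$ is \emph{not} injective in the coherent setting (the paper's remark after the lemma notes that $\ell$-weight multiplicities equal to $2$ do occur above $\nu$), so the global injectivity argument via \eqref{e:tripletpinc=} cannot be transplanted. Multiplicity one for the listed dominant weights instead falls out of the dichotomy itself: every non-exceptional $J$ yields a non-dominant (right-negative-flavored) $\bs{\omega}(J)$, and the three exceptional families of $J$ are in bijection with $\{\bs{\omega}',\bs{\omega}'',\bs{\omega}_l\}$.
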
	

\begin{proof}
	Defining $D$ and $\bs{\omega}(J)$ as in the proof of Lemma \ref{l:tripletpinc}, \eqref{e:tripletpD} remains valid. 
	As before, we start by collecting some information about the elements $\bs{\omega}(J)$ such that $\supp(J)\ne \emptyset$. For $\bs\pi(J,s)$ defined as in \eqref{e:pi-J-s}, we will prove that there exists at least one choice of $(a,s)$ with $a\in\supp(J)$ and $s\in\partial_a J_a$ such that 
	\begin{equation}\label{e:Yappears}
		Y_{s,r_a+2\lambda_a+d(s,a)}^{-1}\ \text{ appears in }\ \bs{\omega}(J),
	\end{equation}
unless $\#\supp(J)=1$ and one of the following options holds: 
\begin{enumerate}[(i)]
	\item $J_k=I$; 
	\item $J_k=I_l$ for some $l\in\partial I_k$;	
	\item If $l\in\supp(J), l\ne k$, then $J_l= I_k=[l,m]$ and the pair $(l,m)$ satisfies \eqref{e:rlmp}.
\end{enumerate}

	To prove this, suppose \eqref{e:Yappears} does not hold for all choices of $(a,s)$. Assume first that 
	\begin{equation}
	 \exists\ a\in\supp(J) \quad\text{such that}\quad \partial_aJ_a\cap\partial I_a = \emptyset.
	\end{equation}
	Since none of the options (i)--(iii) satisfy this hypothesis, we need to show this is yields a contradiction. Fix such $a$ and let $s\in\partial_aJ_a$,  which implies $s\notin \partial I_a$. As seen in the proof of  Lemma \ref{l:tripletpinc}, there must exist $b\in\partial I_a$ such that  $d(s,J_b)=1$ for which \eqref{e:tripletpincnotdomneg} holds. As before, this implies \eqref{e:rb<ra<ra} which, this time, contradicts \eqref{e:cohcenterrelr} if $(a,b)\in \{(l,k),(k,l):l\in \partial I_k\}$. In particular,  $a,b\ne k$ and $k\notin\supp(J)$. Note that, if $\{l,m\} = \{a,b\}$ and $t\in\partial_l J_l\cap(i_*,k)$, then $d(t,J_m)>1$, which  implies \eqref{e:Yappears} holds  with $(l,t)$ in place of $(a,s)$. Thus, either $J_a\cup J_b=I_k$ or $k\in J_b$. If $J_a\cup J_b=I_k$, letting $t\in\partial_bJ_b$, it follows that $d(s,t)=1$ and 
	\begin{equation*}
	\bs\pi(J,t)=  Y_{t,r_b+2\lambda_b+d(t,b)}^{-1}  Y_{t,r_a+2(\lambda_a-1)+{d(t,a)}}.
	\end{equation*}
	We claim \eqref{e:Yappears} holds  with $(b,t)$ in place of $(a,s)$, yielding the desired contradiction. Indeed, this is not the case if and only if 
	\begin{equation*}
	r_b+2\lambda_b+d(t,b) = r_a+2(\lambda_a-1)+d(t,a),
	\end{equation*}
	which contradicts \eqref{e:tripletpincnotdomneg}  since $d(s,b)=d(t,b)+1$ and $d(t,a)=d(s,a)+1$. If $k\in J_b$ and there exists $t\in\partial_b J_b\setminus\{k\}$, the same argument yields a contradiction. It remains to deal with the case $J_b=I_a$ and $J_a=I\setminus I_a = (i_*,a]$.  In this case, we check that \eqref{e:Yappears} holds  with $(b,k)$ in place of $(a,s)$. Indeed,
	\begin{equation}\label{e:kpartialother}
	  \bs\pi(J,k)=  Y_{k,r_b+2\lambda_b+d(k,b)}^{-1} Y_{k,r_k,\lambda_k}.
	\end{equation}
	Thus, $Y_{k,r_b+2\lambda_b+d(k,b)}^{-1}$ is canceled if and only if 
	\begin{equation}
	 r_b+2\lambda_b+d(k,b) = r_k + 2p \qquad\text{for some}\qquad 0\le p<\lambda_k.
	\end{equation}
	One easily checks that \eqref{e:cohcenterrelr} implies that there is no such $p$. 
	
	Assume now 
	\begin{equation}\label{e:border-a}
	  \partial_aJ_a \cap\partial I_a \ne  \emptyset  \quad\text{for all}\quad a\in\supp(J)
	\end{equation}
	which implies $\#\supp(J)=1$ and $J_a$ is either $I$ or $I_m$ for some $m\in\partial I_a$. In particular, if $k\in\supp(J)$, then either option (i) or (ii) holds and we are done. Otherwise, we need to show we are in case (iii). Indeed, letting $l$ be the element of $\supp(J)$, we must have $J_l = I$ or $J_l=I_m$ for some $m\in \partial I_l$. If $k\in\partial_lJ_l$, then \eqref{e:kpartialother} holds with $b=l$ and we have a contradiction as before. Hence, we must have $J_l=I_k=[l,m]$ which implies
	\begin{equation*}
	\bs\pi(J,m) =  Y_{m,r_l+2\lambda_l+d(m,l)}^{-1} Y_{m,r_m,\lambda_m}, 
	\end{equation*}
	and we see that $Y_{m,r_l+2\lambda_l+d(m,l)}^{-1}$ is canceled if and only \eqref{e:rlmp} holds, thus proving we are in case (iii). This completes the proof of \eqref{e:Yappears}.	
	
Note that $\bs{\omega}(J)=\bs{\omega}'$ if $J$ is as in (i), $\bs{\omega}(J)=\bs{\omega}_l$ if $J$ is as in (ii), and $\bs{\omega}(J)=\bs{\omega}''$ if $J$ is as in (iii).  Since \eqref{e:Yappears} implies $\bs{\omega}(J)\notin\mathcal P_q^+$ if $J$ does not satisfy one of these three conditions, all claims of the lemma follows. 
\end{proof}

\begin{rem}
	Notice that part (a) of Lemma \ref{l:tripletpcoh} is weaker than that of Lemma \ref{l:tripletpinc}. Indeed, that stronger statement is false in the context of this subsection. Similar arguments to those employed in the proof of Lemma \ref{l:tripletpinc} can be used to show that, if $\wt(\bs\varpi)\ge \nu$, then  $\dim(W_{\bs{\varpi}})\le 2$ and equality holds if and only if there exists $s\in I$ such that
	\begin{equation}\label{e:ra=rbmod}
	 2\lambda_l+d(l,k)+d(s,l) = 2\lambda_m+d(m,k)+d(s,m) \qquad\text{with}\qquad \{l,m\}=\partial I_k
	\end{equation} 
	and one of the following conditions holds:
	\begin{enumerate}[(i)]
		\item There exists such $s$ in $[l,m]$ and $\bs\varpi=\bs\omega(J)$ for some $J\in\mathscr J$ such that $s\in\partial_lJ_l$ and $[m,s)\subseteq J_m$; 
		\item Such $s$ exists only in $(i_*,k]$ and $\bs\varpi=\bs\omega(J)$ for some $J\in\mathscr J$ such that $J_l=[l,s]$ and $J_m=[m,i_*)$. 
	\end{enumerate}
	In case of (i), we have $\bs\varpi = \bs\omega(J')$ with $J'_l=J_l\setminus\{s\}, J'_m=J_m\cup\{s\}$, and $J'_k=J_k$. In case of (ii), $\bs\varpi = \bs\omega(J')$ with $J'_l=[l,i_*), J'_m=[m,s]$, and $J'_k=J_k$.	Since these facts will not play a role in this paper, we omit the details. It might be interesting to observe that, for generic $\lambda$, there is no $s\in I$ satisfying \eqref{e:ra=rbmod} and, hence, part (a) of Lemma  \ref{l:tripletpinc} is valid in the present context for ``most'' values of $\lambda$.\endd
\end{rem}

\begin{lem}\label{l:omega''nofactor}
	If \eqref{e:rlmp} holds, $\bs{\omega}''\in\wtl(V_q(\bs{\omega}))$.
\end{lem}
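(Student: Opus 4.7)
The strategy is a two-step reduction: first transfer the computation to the type-$A$ subdiagram $[l,m]$ via \eqref{eq mult subdiagram}, then exhibit $\bs{\omega}''_{[l,m]}$ as an $\ell$-weight of $V_q(\bs{\omega}_{[l,m]})$ by showing the relevant type-$A$ tensor product is actually irreducible.

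First, I would establish the telescoping identity
\begin{equation*}
\frac{\bs{\omega}''}{\bs{\omega}} \;=\; \prod_{j\in[l,m]} \bs{\alpha}_{j,\,q^{r_l+2\lambda_l-2+d(l,j)}}^{-1}
\end{equation*}
by a node-by-node cancellation along the path $[l,m]$. At each internal node of $[l,m]$, the $Y_j$ contributions from neighboring $\bs{\alpha}$-factors telescope out; at the endpoints $l$ and $m$ one retains the residual $Y_l^{-1}$ and $Y_m^{-1}$ factors; and at the trivalent node $i_*\in[l,m]$, whose third neighbor $k_*$ lies off the path, one is left with the single $Y_{k_*,\cdot}$ factor appearing in $\bs{\omega}''$. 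Condition \eqref{e:rlmp} is precisely what forces the $m$-endpoint residue $Y_{m,\,r_l+2\lambda_l+d(l,m)}^{-1}$ to equal the target $Y_{m,r_m+2p}^{-1}$. Consequently $\bs{\omega}''/\bs{\omega}\in\iota_{[l,m]}(\mathcal{Q}_{[l,m]})$, and \eqref{eq mult subdiagram} applied with $J=[l,m]$ yields
\begin{equation*}
\dim\bigl(V_q(\bs{\omega})_{\bs{\omega}''}\bigr) \;=\; \dim\bigl(V_q(\bs{\omega}_{[l,m]})_{\bs{\omega}''_{[l,m]}}\bigr),
\end{equation*}
reducing the problem to a computation in the type $A_{d(l,m)+1}$ subalgebra $U_q(\tlie g_{[l,m]})$.

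Second, I would prove the right-hand side is positive by showing that, inside $U_q(\tlie g_{[l,m]})$, the tensor product $V_q(Y_{l,r_l,\lambda_l})\otimes V_q(Y_{m,r_m,\lambda_m})$ is \emph{irreducible}, so that $V_q(\bs{\omega}_{[l,m]})$ coincides with this tensor product. The coherent relation \eqref{e:cohcenterrelr} gives $r_m-r_l=d(k,m)-d(k,l)$, which, combined with \eqref{e:rlmp}, forces $p=\lambda_l+d(l,i_*)$. A direct check against Theorem \ref{t:tpa} then shows that the value of the parameter $\eta'$ demanded by either reducibility condition (i) or (ii) works out to $\eta'=\lambda_l+d(l,i_*)+1$ or $\eta'=\lambda_m+d(m,i_*)+1$ respectively, both strictly larger than $\min\{\lambda_l,\lambda_m\}$. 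Hence neither reducibility condition can be satisfied, and $V_q(\bs{\omega}_{[l,m]})\cong V_q(Y_{l,r_l,\lambda_l})\otimes V_q(Y_{m,r_m,\lambda_m})$.

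Finally, Lemma \ref{l:qcKR1} gives $Y_{l,r_l,\lambda_l}([l,m])\in\wtl(V_q(Y_{l,r_l,\lambda_l}))$, and using \eqref{e:rlmp} once more to identify $Y_{m,\,r_l+2\lambda_l+d(l,m)}^{-1}$ with the factor $Y_{m,r_m+2p}^{-1}$ of $Y_{m,r_m,\lambda_m}$, one verifies the key identity
\begin{equation*}
Y_{l,r_l,\lambda_l}([l,m])\cdot Y_{m,r_m,\lambda_m} \;=\; \bs{\omega}''_{[l,m]}.
\end{equation*}
Hence the product of the corresponding $\ell$-weight vectors in the tensor product realizes $\bs{\omega}''_{[l,m]}$ as an $\ell$-weight of $V_q(\bs{\omega}_{[l,m]})$, which by Step 1 forces $\bs{\omega}''\in\wtl(V_q(\bs{\omega}))$. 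The principal subtlety will be in the second step, namely the verification that the coherence hypothesis together with \eqref{e:rlmp} is simultaneously incompatible with both reducibility conditions of Theorem \ref{t:tpa}; the telescoping of Step 1 is technical but routine, and the identification in Step 3 is immediate once the numerical identity is unpacked.
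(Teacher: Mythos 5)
Your proof is correct, but it takes a genuinely different route from the paper's. The paper argues at the level of the classical weight $\nu_k=\wt(\bs{\omega}'')$: it computes $\dim(W_{\nu_k})=d(l,m)+2$ classically via \eqref{e:dimWnu} and Proposition \ref{p:parts}, shows $\dim(V_q(\bs{\omega})_{\nu_k})=d(l,m)+2$ by Lemma \ref{l:W(2)_0} (the only numerical check being $p>0$, which follows from \eqref{e:cohcenterrelr}), and then concludes from $\qch(V_q(\bs{\omega}))\le\qch(W)$ that every $\ell$-weight of $W$ of weight $\nu_k$ --- in particular $\bs{\omega}''=Y_{k,r_k,\lambda_k}\,Y_{l,r_l,\lambda_l}(I_k)\,Y_{m,r_m,\lambda_m}$, which lies in $\wtl(W)$ by Lemma \ref{l:qcKR1} and \eqref{e:qctp} --- survives in $V_q(\bs{\omega})$. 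You instead localize to the single $\ell$-weight $\bs{\omega}''$ via \eqref{eq mult subdiagram} and prove the stronger statement that $V_q(Y_{l,r_l,\lambda_l})\otimes V_q(Y_{m,r_m,\lambda_m})$ is irreducible over $U_q(\tlie g_{[l,m]})$. Your numerics check out: $p=\lambda_l+d(l,i_*)$ yields the two candidate values $\eta'=p+1=\lambda_l+d(l,i_*)+1$ and $\eta'=\lambda_m+d(m,i_*)+1$, each exceeding the respective $\lambda$ and hence $\min\{\lambda_l,\lambda_m\}$, so both conditions of Theorem \ref{t:tpa} fail; note that coherence is used essentially here, since \eqref{e:rlmp} alone only gives $p+1\le\lambda_m$ and a small $p$ could satisfy the condition with $\eta'=p+1$. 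Your approach buys more --- irreducibility of the sub-tensor-product determines $\qch(V_q(\bs{\omega}))$ on all of $\bs{\omega}\mathcal{Q}_{[l,m]}$, not just at the one $\ell$-weight --- at the cost of invoking Theorem \ref{t:tpa}, which the paper reserves for the proof of Theorem \ref{t:cohnotmin} itself and replaces here by the lighter Proposition \ref{p:Anotmintheta} hidden inside Lemma \ref{l:W(2)_0}. Two cosmetic points: in your last step the tensor product of $\ell$-weight vectors is not literally an $\ell$-weight vector, so you should appeal to the multiplicativity of qcharacters \eqref{e:qctp} rather than to a product of vectors; and your assignment of the two $\eta'$ values to conditions (i) and (ii) appears swapped under the identification of $m$ with the node $n$ of Theorem \ref{t:tpa}, though this does not affect the conclusion.
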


\begin{proof}
 Observe that $\wt(\bs{\omega}'')=\nu_k$. By \eqref{e:dimWnu} and Proposition \ref{p:parts}, 
		\begin{equation*}
		\dim(W_{\nu_k})=\dim(V(\lambda)_{\nu_k})+1 = d(l,m)+2.
		\end{equation*} 
		On the other hand, Lemma \ref{l:W(2)_0} implies $\dim(V_q(\bs{\omega})_{\nu_k})=d(l,m)+2$, completing the proof of the lemma. To see that 
		Lemma \ref{l:W(2)_0} implies what we claimed, it suffices to check that $p>0$ in \eqref{e:rlmp}. Indeed, we have
		\begin{equation*}
		2p\stackrel{\eqref{e:rlmp}}{=}r_l+2\lambda_l+d(l,m)-r_m \stackrel{\eqref{e:cohcenterrelr}}{=} d(k,l)-d(k,m)+d(l,m)+2\lambda_l.
		\end{equation*}
		Since 
		\begin{equation*}
		d(l,m)=d(k,l)+d(k,m)-2d(k,i_*),
		\end{equation*}
		it follows that
		\begin{equation*}
		2p=2(d(k,l)-d(k,i_*)+\lambda_l)>0.
		\end{equation*}
	
\end{proof}

\begin{proof}[Proof of Proposition \ref{p:outer}(c)]
	Proceeding similarly to the proof of \eqref{e:4n-2final}, this time it follows from Lemma \ref{l:W(2)_0} that
	\begin{equation}\label{e:4n-2coh}
	\dim(V_q(\bs{\omega}_l)_\nu) = d(l_*,l) + 1 \qquad\text{for}\qquad l\in\partial I_k.
	\end{equation}
	Evidently,
	\begin{equation}\label{e:4n-2coh'}
	\dim(V_q(\bs{\omega}')_\nu) = 1.
	\end{equation}
	Write $\partial I_k=\{l,m\}$. Proceeding as in the proof of part (d) of the proposition given in the previous subsection, we get
	\begin{align*}
	\dim(W_\nu) - \dim(V_\nu) &= d(l_*,l) + d(m_*,m) + 4 -\xi \\& =  \dim(V_q(\bs{\omega}')_\nu) + \dim(V_q(\bs{\omega}_m)_\nu)+\dim(V_q(\bs{\omega}_l)_\nu) + 1 - \xi.
	\end{align*}
	Lemmas \ref{l:tripletpcoh}  and \ref{l:omega''nofactor} imply  that $V, V_q(\bs{\omega}_l), V_q(\bs{\omega}_m)$, and $V_q(\bs{\omega}')$ are the only possible irreducible factors of $W$ having $\nu$ as a weight and all of them occur with multiplicity at most $1$. Thus, the above computation shows that all of them are indeed irreducible factors of $W$ as well as $\xi=1$, thus proving \eqref{e:4n-2}.
\end{proof}

\subsection{Proof of Theorem \ref{t:cohnotmin}}\label{ss:cohnotmin}

Let $\bs{\omega}$ be as in Conjecture \ref{cj:cohnotmin}, choose  $m\in\partial I\setminus\{k\}$, and let $\bs{\varpi}\in\mathcal P^+_q$ be such that $\wt(\bs{\varpi})=\lambda$,
\begin{equation*}
\qquad \bs{\omega}_{I_m} = \bs{\varpi}_{I_m}, \quad V_q(\bs{\varpi}_{I_l}) \text{ is minimal for }l\ne k, \quad\text{and}\quad \bs{\varpi} \quad\text{is incoherent.}
\end{equation*}
One easily sees that such $\bs{\varpi}$ is unique for each choice of $m$. To simplify the writing, we will assume further that
\begin{equation}\label{e:mclosetoi*}
d(m,i_*)\le d(l,i_*) \qquad\text{for}\qquad l\in\partial I_k.
\end{equation}
Under the hypothesis of Theorem \ref{t:cohnotmin}, we will show that 
\begin{equation}\label{e:cohincrel}
V_q(\bs{\omega})>V_q(\bs{\varpi}),
\end{equation}
which proves the Theorem. We remark that \eqref{e:cohincrel} holds even if we did not choose $m$ satisfying \eqref{e:mclosetoi*} (in fact, the two choices give rise to equivalent affinizations by Proposition \ref{prop inverte}). The reason behind this choice is that, under the hypotheses of Theorem \ref{t:cohnotmin},  \eqref{e:mclosetoi*} implies
\begin{equation}\label{e:m=em}
\supp(\lambda)\cap [i_*,m] = \{m\},
\end{equation}
which simplifies part of the argument.  Note that \eqref{e:mclosetoi*} also implies $d(i_*,m)\le 2$, independently of the hypotheses of Theorem \ref{t:cohnotmin}. Moreover, under the hypotheses of Theorem \ref{t:cohnotmin}, $d(i_*,m)= 2$ only for $\lie g$ of type $E_6$ with $d(k,i_*)=1$, a case that can happen only under hypothesis (iii) of the theorem.

We have to show that, for all $\mu\in P^+$ such that $\mu<\lambda$, 
	\begin{equation}\label{e:D4mindef}
	\text{either}\ \ m_\mu(V_q(\bs\omega))\ge m_\mu(V_q(\bs\varpi)) \ \ \text{or}\ \ \exists\ \
	\mu'>\mu \ \ \text{s.t.}\ \ m_{\mu'}(V_q(\bs\varpi))< m_{\mu'}(V_q(\bs\omega)).
	\end{equation}
It obviously suffices to consider the case that $m_\mu(V_q(\bs\varpi))>0$. 	For $i\in\partial I$, let $j_i\in[i_*,i]$ be the element satisfying
\begin{equation*}
J_\mu\cap (j_i,i]=\emptyset \quad\text{and}\quad [i_*,j_i]\subseteq J_\mu
\end{equation*}
given by Lemma \ref{l:many0}(b). Lemma \ref{l:<nuk} implies 
\begin{equation*}
l_\lambda\in(i_*, j_l] \quad\text{for}\quad l\in\partial I_k.
\end{equation*}
If $k_\lambda\in [i_*,j_k]$, then $\mu\le\nu$ and, since $m_\mu(V_q(\bs\varpi))>0$, it follows from part (d) of Proposition \ref{p:outer} that $\mu<\nu$. Parts (c) and (d) of Proposition \ref{p:outer} imply that the second option in \eqref{e:D4mindef} is satisfied with $\mu'=\nu$. We claim that 
\begin{equation*}
k_\lambda\notin [i_*,j_k] \quad\Rightarrow\quad 	m_\mu(V_q(\bs\omega))= m_\mu(V_q(\bs\varpi)),
\end{equation*}
which completes the proof of \eqref{e:D4mindef} and, hence, of Theorem \ref{t:cohnotmin}. The claim clearly follows if we prove that, for $\bs{\pi}\in\{\bs{\omega},\bs{\varpi}\}$,  we have 
\begin{equation}\label{e:tpneeded}
k_\lambda\notin [i_*,j_k] \quad\Rightarrow\quad 	V_q(\bs{\pi}_{J_\mu})\cong  V_q\left(\bs\pi^{\{m\}}_{J_\mu}\right)\otimes V_q\left(\bs\pi^{[i_*,l]}_{J_\mu}\right),
\end{equation}
where $l$ is the unique element of $\partial I\setminus\{k,m\}$. Note that we have used \eqref{e:m=em} here.

Suppose first that $$j_k=i_*$$
which implies that $J_\mu$ is of type $A$. 
In order to prove \eqref{e:tpneeded}, we shall use Theorem \ref{t:tpa}. For $i\in I$, set $\lambda_i=\lambda(h_i)$.  Since $\bs{\omega}$ is coherent, there exist $a\in\mathbb F^\times$ and $\epsilon\in\{-1,1\}$ such that
\begin{equation*}
\bs{\omega} = \bs{\omega}_{k_\lambda,a,\lambda_{k_\lambda}}\ \bs{\omega}_{m,a_m,\lambda_m}\prod_{i\in (i_*,l]}\bs\omega_{i,a_i,\lambda_i}
\end{equation*}
with
\begin{equation}
a_m=aq^{\epsilon(\lambda_{k_\lambda}+\lambda_m+d(m,i_*)+1)} \qquad\text{and}\qquad  a_i=aq^{\epsilon\left(\lambda_{k_\lambda}+\lambda_i+d(i,i_*)+1+2\sum\limits_{j\in (i_*,i)}\lambda_j\right)}
\end{equation}
for all $i\in(i_*,l]$. Then, by definition of $\bs{\varpi}$, we have
\begin{equation*}
\bs{\varpi} = \bs{\omega}_{k_\lambda,a,\lambda_{k_\lambda}}\ \bs{\omega}_{m,b_m,\lambda_m}\prod_{i\in (i_*,l]}\bs\omega_{i,a_i,\lambda_i}
\end{equation*}
with
\begin{equation}
b_m=aq^{-\epsilon\left(\lambda_{k_\lambda}+\lambda_m+d(m,i_*)+1\right)}.
\end{equation}
Since the irreducibility of $V_q(\bs{\pi}_{J_\mu})$ is independent of the value of $\epsilon$, in order to avoid stating the version of Theorem \ref{t:tpa} for decreasing minimal affinizations (see Remark \ref{r:tpa}), we shall choose $\epsilon$ so that we place ourselves in the context of  Theorem \ref{t:tpa} as stated here. Thus, we identify $J_\mu$ with a diagram of type $A_n$ by letting $l$ be identified with $1$ and $m$ with $n$ and choose $\epsilon=-1$. With these choices, \eqref{e:tpneeded} follows if  the pair $\left(\bs\pi^{[i_*,l]}_{J_\mu},\bs\pi^{\{m\}}_{J_\mu}\right)$ does not satisfy any of the conditions of Theorem \ref{t:tpa} for $\bs{\pi}\in\{\bs{\omega},\bs{\varpi}\}$.  Note also that $l_\lambda$ corresponds to $j$ in the statement of Theorem \ref{t:tpa} and, hence, $a_{l_\lambda}$ corresponds to $a$ there and the ratio $q^s = b/a$ corresponds to $a_m/a_{l_\lambda}$, in the case $\bs{\pi}=\bs{\omega}$, and $b_m/a_{l_\lambda}$ for $\bs{\pi}=\bs{\varpi}$. This gives,
\begin{equation}\label{e:thess}
	\begin{aligned}
	   &\bs{\pi} = \bs{\omega} \qquad\Rightarrow\qquad && s = \lambda_{l_\lambda} -\lambda_m + d(l_\lambda,i_*) - d(m,i_*),\\ 
	   &\bs{\pi} = \bs{\varpi} \qquad\Rightarrow\qquad && s = 2\lambda_{k_\lambda}+\lambda_{l_\lambda}+\lambda_m + d(m,i_*) + d(l_\lambda,i_*) + 2.
	\end{aligned}
\end{equation}
Since $\lambda_m$ corresponds to $\eta$ in the statement of Theorem \ref{t:tpa}, the negation of its condition (i) is
\begin{equation}\label{e:condforirri}
\begin{aligned}
 s +&  \lambda_m + d(i,m) + 2 +\lambda_{l_\lambda} + d(i,l_\lambda) + 2\sum_{p\in [i,l_\lambda)}\lambda_p \ne 2t \\  
 &\text{for all}\quad i\in\supp(\lambda)\cap[l,i_*],\quad 1\le t\le \min\{\lambda_i,\lambda_m\},
\end{aligned}
\end{equation}
while that of condition (ii) is 
\begin{equation}\label{e:condforirrii}
s -  \lambda_m - \lambda_{l_\lambda} - d(l_\lambda,m) - 2 \ne  -2t \qquad\text{for all}\qquad  1\le t\le \min\{\, _l|\lambda|_{i_*},\lambda_m\}.
\end{equation}
Thus, to prove  \eqref{e:tpneeded} in the case $d(k_\lambda,i_*)=1$, it suffices to check that, for $s$ as in \eqref{e:thess}, \eqref{e:condforirri} and \eqref{e:condforirrii} hold.  

For $\bs\pi=\bs{\omega}$,  \eqref{e:condforirri} becomes
	\begin{equation*}
	2(\lambda_i-t)+2\lambda_{l_\lambda} + d(l_\lambda,i_*) + d(i,m) + d(i,l_\lambda) + (2-d(i_*,m)) + 2\sum_{p\in (i,l_\lambda)}\lambda_p \ne 0,
	\end{equation*}
	for all $1\le t\le \min\{\lambda_i,\lambda_m\}$, which is true since, for such $t$, all the summands above are nonnegative and several of them are not zero (e.g., $\lambda_{l_\lambda}\ne 0$).
	Equation \eqref{e:condforirrii} for $\bs{\pi=\bs{\omega}}$ becomes
	\begin{equation*}
	2(\lambda_m-t) + (d(l_\lambda,m)-d(l_\lambda,i_*)) + d(m,i_*) +2 \ne 0
	\end{equation*}
	for all $1\le t\le \min\{\, _l|\lambda|_{i_*},\lambda_m\}$. As before, we see that, for such $t$, all summands are nonnegative and several are positive, completing the proof of \eqref{e:tpneeded} in the case $d(k_\lambda,i_*)=1$ and $\bs{\pi}=\bs{\omega}$.	
	For $\bs\pi=\bs{\varpi}$, \eqref{e:condforirri} becomes 
	\begin{equation*}
	2(\lambda_{k_\lambda}+\lambda_{l_\lambda} + \lambda_i +\lambda_m -t) + d(l_\lambda,i_*) + d(i,m) + d(i,l_\lambda) +d(m,i_*)+4 + 2\sum_{p\in (i,l_\lambda)}\lambda_p \ne 0
	\end{equation*}
	for all $1\le t\le \min\{\lambda_i,\lambda_m\}$, which is easily seen to be true as before. On the other hand, equation \eqref{e:condforirrii} becomes
	\begin{equation*}
	2\lambda_{k_\lambda} +2t + (d(l_\lambda,i_*) - d(l_\lambda,m)) +d(m,i_*) \ne 0,
	\end{equation*}
	which clearly holds for all $t\ge 1$.

Finally, suppose $$j_k\ne i_*$$
which, together with the hypothesis in \eqref{e:tpneeded}, implies we must be under the hypothesis (i) or (iii) of Theorem \ref{t:cohnotmin}. In particular,   $J_\mu$ is of type $D_n$ with $n\ge 5$, $d(m,i_*)=1$, and $l_\lambda=l$.  Hence, we need to check that $\left(\bs\pi^{\{m\}}_{J_\mu},\bs\pi^{\{l\}}_{J_\mu}\right)$ does not satisfy any of the conditions of Corollary \ref{c:tpd}(a), in case hypothesis (i) is satisfied, and  of Corollary \ref{c:tpd}(b), in case hypothesis (iii) is satisfied, with $l$ corresponding to the non-spin node. This time there exist $a\in\mathbb F^\times$ and $\epsilon\in\{-1,1\}$ such that
\begin{equation*}
\bs{\omega} = \bs{\omega}_{k_\lambda,a,\lambda_{k_\lambda}}\ \bs{\omega}_{m,a_m,\lambda_m}\ \bs\omega_{l,a_l,\lambda_l} \qquad\text{and}\qquad \bs{\varpi} = \bs{\omega}_{k_\lambda,a,\lambda_{k_\lambda}}\ \bs{\omega}_{m,b_m,\lambda_m}\ \bs\omega_{l,a_l,\lambda_l}
\end{equation*}
with
\begin{equation}
 \frac{a_m}{a}=q^{\epsilon(\lambda_{k_\lambda}+\lambda_m+d(m,k_\lambda))}=\frac{a}{b_m} \qquad\text{and}\qquad  \frac{a_l}{a}=q^{\epsilon\left(\lambda_{k_\lambda}+\lambda_l+d(m,k_\lambda)+t\right)},
\end{equation}
where $t=0$ for hypothesis (i) (both $l$ and $m$ are spin nodes) and $t=1$ for hypothesis (iii) ($l$ and $k$ are the extremal nodes of the subdiagram of type $A_5$). For checking \eqref{e:tpneeded} with $\bs{\pi}=\bs{\omega}$, in case of (i), we need to check that
\begin{equation*}
  \lambda_m-\lambda_l \ne \pm(\lambda_m+\lambda_l+2(2s-p))  \qquad\text{for all}\qquad 1\le p\le\min\{\lambda_m,\lambda_l\}, \ 1\le s\le \lfloor (\#J_\mu-1)/2\rfloor.
\end{equation*}
But equality holds if and only if there exist $i\in\{m,l\}$ and $s,p$ in the above ranges such that
\begin{equation*}
  \lambda_i-p+2s =0,
\end{equation*} 
which is impossible since $\lambda_i-p\ge 0$ and $s>0$. Similarly, in case of (iii), one easily checks that
\begin{equation*}
 \lambda_m-\lambda_l-1 \ne \pm(\lambda_m+\lambda_l+\#J_\mu-2p)  \qquad\text{for all}\qquad 1\le p\le\min\{\lambda_m,\lambda_l\}.
\end{equation*}
For checking \eqref{e:tpneeded} with $\bs{\pi}=\bs{\varpi}$, in case of (i), we need to check that
\begin{gather*}
2\lambda_{k_\lambda}+\lambda_m+\lambda_l +2d(k_\lambda,m)\ne \pm(\lambda_m+\lambda_l+2(2s-p))\\  \qquad\text{for all}\qquad\\ 1\le p\le\min\{\lambda_m,\lambda_l\}, \ 1\le s\le \lfloor (\#J_\mu-1)/2\rfloor.
\end{gather*}
But equality is impossible because
\begin{gather*}
\lambda_{k_\lambda} + d(k_\lambda,m)\ge d(j_k,m) +2 = \#J_\mu  \qquad\text{while}\qquad 2s-p\le \#J_\mu-1\\ \text{and}\\
\lambda_{k_\lambda} + \lambda_m+\lambda_l+ d(k_\lambda,m)> \#J_\mu +2 + 2\min\{\lambda_m,\lambda_l\} \qquad\text{while}\qquad p-2s\le \min\{\lambda_m,\lambda_l\}.
\end{gather*} 
In case of (iii), we have $\#J_\mu=5$, $d(k_\lambda,m)=d(k,m)=3$ and one easily checks that
\begin{equation*}
2\lambda_{k_\lambda}+\lambda_m+\lambda_l +2d(k_\lambda,m)+1 \ne \pm(\lambda_m+\lambda_l+\#J_\mu-2p)  \quad\text{for all}\quad 1\le p\le\min\{\lambda_m,\lambda_l\}.
\end{equation*}
This completes the proof  of Theorem \ref{t:cohnotmin}.

\bibliographystyle{amsplain}

\end{document}